\providecommand{\U}[1]{\protect\rule{.1in}{.1in}}
\newtheorem{theorem}{Theorem}
\newtheorem{definition}[theorem]{Definition}
\newtheorem{lemma}[theorem]{Lemma}
\newtheorem{proposition}[theorem]{Proposition}
\newtheorem{remark}[theorem]{Remark}
\newenvironment{proof}[1][Proof]{\noindent\textbf{#1} }{\ \rule{0.5em}{0.5em}}
\numberwithin{theorem}{section}
\begin{document}
 
\author{}
\title{\scshape Calculus of Variations: A Differential Form Approach }
\date{}
\maketitle

\centerline{\scshape Swarnendu Sil}
\medskip
{\footnotesize
 \centerline{ Section de Math\'{e}matiques}
   \centerline{Station 8, EPFL}
   \centerline{1015 Lausanne, Switzerland}
   \centerline{swarnendu.sil@epfl.ch}
}

\begin{abstract}We study integrals of the form $\int_{\Omega}f\left(  d\omega_1 , \ldots , d\omega_m \right), $ where $m \geq 1$ is a given integer, $1 \leq k_{i} \leq n$ 
are integers  and $\omega_{i}$ is a $(k_{i}-1)$-form 
for all $1 \leq i \leq m$ and $ f:\prod_{i=1}^m \Lambda^{k_i}\left(  \mathbb{R}^{n}\right)
\rightarrow\mathbb{R}$ is a continuous function. We introduce the appropriate notions of convexity, namely vectorial ext. one convexity, vectorial ext. 
quasiconvexity and vectorial ext. polyconvexity. We prove weak lower semicontinuity theorems and weak continuity theorems and conclude with applications to minimization problems. 
These results generalize the corresponding results in both classical vectorial calculus of variations and the calculus of variations for a single differential form.  
\end{abstract}
\textit{Keywords:} calculus of variations, quasiconvexity, polyconvexity, exterior convexity, differential form, wedge products, weak lower semicontinuity, weak continuity, 
minimization.\smallskip

\noindent\textit{2010 Mathematics Subject Classification:} 49-XX.

\section{Introduction}
In this article, we study integrals of the form 
$$\int_{\Omega}f\left(  d\omega_1 , \ldots , d\omega_m \right), $$
where $\Omega \subset \mathbb{R}^{n}$ is open and bounded, $m \geq 1$ is a given integer, $1 \leq k_{i} \leq n$ are integers  and $\omega_{i}$ is a $(k_{i}-1)$-form
for all $1 \leq i \leq m$ and $ f:\prod_{i=1}^m \Lambda^{k_i}\left(  \mathbb{R}^{n}\right)
\rightarrow\mathbb{R}$ is a continuous function. When $m=1,$ this problem reduces to the study of the integrals 
$$\int_{\Omega}f\left(  d\omega \right),$$
which was studied systematically in Bandyopadhyay-Dacorogna-Sil \cite{BandDacSil}. On the other hand, when $k_{i}=1$ for all $1 \leq i \leq m,$ the problem can be 
identified with the study of the integrals
$$ \int_{\Omega}f\left(  \nabla  u \right),$$
when $u:\Omega \subset \mathbb{R}^{n} \rightarrow \mathbb{R}^{m}$ is an $\mathbb{R}^{m}$-valued function, which is the classical problem of the calculus of variations, where $m=1$
is called the scalar case and $m >1$ is called the vectorial case. Thus the study of the integrals $\int_{\Omega}f\left(  d\omega_1 , \ldots , d\omega_m \right) $ unifies the classical 
calculus of variations and the calculus of variations for a single 
differential form under a single framework. 

\par  The convexity properties of $f$ plays a crucial role. Generalizing the notions introduced in Bandyopadhyay-Dacorogna-Sil \cite{BandDacSil}, here we introduce the following terminology: \emph{vectorial ext. one convexity}, 
\emph{vectorial ext. quasiconvexity} and \emph{vectorial ext. polyconvexity}. These notions play analogous roles of the classical notions of rank one convexity, 
quasiconvexity and polyconvexity (see, for example Dacorogna \cite{DCV2}) respectively and reduce to precisely those notions 
in the special case when $k_{i}=1$ for all $1 \leq i \leq m.$
The characterization theorem for vectorially ext. quasiaffine functions, obtained for the first time in Sil \cite{silthesis}, is proved. As a corollary, this gives a new proof of the celebrated characterization 
theorem of Ball \cite{Ballquasiaffine}  for quasiaffine functions in the classical case.\smallskip

\par The necessity and sufficiency of vectorial ext. quasiconvexity 
of the map
$(\xi_{1},\ldots, \xi_{m}) \mapsto f(x, \xi_{1},\ldots, \xi_{m}),$ with usual power-type growth condition on $f$,
for the sequential weak lower semicontinuity of integrals of the form 
$$ \int_{\Omega}f\left( x,  d\omega_1 , \ldots , d\omega_m \right),$$
in the larger space $W^{d,\boldsymbol{p}}(\Omega; \boldsymbol{\Lambda^{k-1}})$ is shown, with an additional assumption on traces if $p_{i}=1$ 
but $k_{i} \neq 1$ for some $1 \leq i \leq m.$ 
. Unlike the classical calculus of variations, in general, $W^{d,\boldsymbol{p}},$ instead of 
$W^{1,\boldsymbol{p}},$ is the relevant space from the point of view of coercivity. A counterexample shows the 
result to be optimal in the sense that the semicontinuity result is false if we allow explicit dependence on $\omega_{i}$s in general. This failure is essentially due to the lack of  
Sobolev inequality in $W^{d,\boldsymbol{p}}.$
\par Equivalence of vectorial ext. quasiaffinity with sequential weak continuity of the integrals 
$$\int_{\Omega}f\left( d\omega_1 , \ldots , d\omega_m \right) ,$$ on $W^{d,\boldsymbol{p}}(\Omega; \boldsymbol{\Lambda^{k-1}})$ is proved. 
Sufficiency part of this result however has essentially been obtained in Robbin-Rogers-Temple \cite{RobbinRogersTempleweakcont}. In the spirit of the 
distributional Jacobian determinant in the classical case, two distinct notions of distributional 
wedge product of exact forms are introduced, one generalizing Brezis-Nguyen \cite{BrezisNguyenjacobian} and the other following Iwaniec 
\cite{Iwaniecnonlinearcommutator}. Distributional weak convergence results for such products are proved.
\par Existence theorems for minimization problems for vectorially ext. quasiconvex and 
vectorially ext. polyconvex functions, with possible explicit $x$-dependence are obtained. A counterexample is given to show that minimizer might not exist in general 
if we allow the integrand to depend explicitly on $\omega_{i}$.

\par This achieved unification also both clarifies and raises a number of interesting points, which merit further study. 
\begin{itemize}
 \item The so-called `divergence structure' and cancellations of the determinants, giving rise to improved integrability and weak continuity, is well-known in the classical 
 calculus of variations. It has been exploited in various contexts, namely 
nonlinear elasticity (beginning with \mbox{Ball\cite{Ballquasiaffine}}), theory of `compensated compactness' (\mbox{Coifman-Lions-Meyer-Semmes \cite{CLMS}}, 
\mbox{DiPerna \cite{DipernaCL}}, \mbox{Murat\cite{Muratsufficientnecessarycompensated}}, \mbox{Tartar\cite{TaCC}}), theory of quasiconformal maps and the associated Beltrami fields 
(\mbox{Iwaniec \cite{IwaniecPharmonic}}, \mbox{Iwaniec-Sbordone \cite{IwaniecSbordoneQuasiharmonic}}), very weak solutions of PDEs (\mbox{Sbordone \cite{SbordoneSurveyveryweak}}) etc.
The unified framework views these ideas as central to the calculus of variations as a whole and puts these ideas in their most general and natural setting - the exterior algebra. 
By isolating and clarifying the fundamental core of these ideas, which already proved to be immensely powerful in myriad contexts, the unification can potentially open doorways to new advances 
in nonlinear analysis, especially in a geometric setting.
  
\item On the other hand, from the unified perspective, our ability to settle minimization problems when the integrand have quite general explicit dependence on the $\omega_{i}$s is 
a feature specific to the classical calculus of variations and 
does not extend beyond it. This failure, however, highlights another very fundamental issue, the so-called `gauge invariance' of the minimization problem. Even when $m=1$ but 
$k >1,$ the integrand and thus the minimization problem for $\displaystyle \int_{\Omega} f(x, d\omega)$ is invariant under translation by the infinite dimensional subspace of closed $(k-1)$-forms with 
vanishing boundary values. The lack of coercivity on $W^{1,p}$, unavailability of Sobolev inequality in  $W^{d,p},$ the 
space on which the functional is coercive and the counterexamples to both the semicontinuity and the existence results when general explicit dependence on $\omega$ is allowed are all   
manifestations of this invariance. Also, the crucial fact which allows us to derive existence of minimizers in $W^{1,p}$ is essentially a 
`gauge fixing procedure' (see lemma \mbox{\ref{lemmaforhodge}}). In the general setting of gauge field theories, Uhlenbeck 
\mbox{\cite{UhlenbeckGaugefixing}} proved a gauge fixing result to study Yang-Mills fields, where the energy functional is convex. A better understanding of the interplay between gauge invariance  issues and the introduced convexity notions will likely serve as a stepping stone 
to generalizations of gauge field theories with non-convex energies. 
\end{itemize}

\par The rest of the article is organized as follows. Section \ref{notations} collects all the notations used throughout the article. Section \ref{notionsofconvexity} introduces the convexity notions, derives some basic properties and 
proves the characterization theorem for vectorially quasiaffine functions. Section \ref{weaklowersemicontinuity} and Section \ref{weakcontinuity} discuss sequential weak lower 
semicontinuity and sequential weak continuity results, respectively. Section \ref{existencetheorems} discusses existence theorems for vectorially ext. quasiconvex and vectorially 
ext. polyconvex integrands. \smallskip

\section{Notations}\label{notations}

We gather here the notations which we use throughout this article. We reserve boldface english or greek letters to denote $m$-tuples of integers, real numbers, 
exterior forms etc as explained below.  
\begin{enumerate}
 \item Let $m,n \geq 1$ be integers.
 \begin{itemize}
 \item $\wedge,$ $\lrcorner\,,$ $\left\langle\  ,\ \right\rangle $ and $\ast$ denote the exterior product, the interior product, the
scalar product and the Hodge star operator , respectively.
  \item $\boldsymbol{k}$ stands for an $m$-tuple of integers, $\boldsymbol{k} = (k_1,\ldots, k_m),$ where $1\leq k_i\leq n$ for all $1 \leq i \leq m ,$ where $m \geq 1$
   is a positive integer.
  We  write $\displaystyle \boldsymbol{\Lambda^k}(\mathbb{R}^{n})$ ( or simply $\displaystyle \boldsymbol{\Lambda^k}$) to 
  denote the Cartesian product $\displaystyle \prod_{i=1}^m \Lambda^{k_i}\left(  \mathbb{R}^{n}\right)$, where $\Lambda^{k_i}\left(  \mathbb{R}^{n}\right)$ 
  denotes the vector space of all alternating $k_{i}$-linear maps
$f:\underbrace{\mathbb{R}^{n}\times\cdots\times\mathbb{R}^{n}}_{k_{i}\text{-times}%
}\rightarrow\mathbb{R}.$ For any integer $r$, we also employ the shorthand $\displaystyle \boldsymbol{\Lambda}^{\boldsymbol{k+r}}$ to stand for the product 
$\displaystyle \prod_{i=1}^m \Lambda^{k_i + r}\left(  \mathbb{R}^{n}\right).$ We denote elements of $\displaystyle \boldsymbol{\Lambda^k}$ by boldface greek letters, except $\boldsymbol{\alpha}$, which we reserve for 
multiindices (see below). For example, we write 
$ \displaystyle \boldsymbol{\xi}  \in \boldsymbol{\Lambda^k}$ to mean $\displaystyle \boldsymbol{\xi} =  (\xi_1, \ldots, \xi_m )$ is an $m$-tuple of exterior forms, with 
$\xi_{i} \in \Lambda^{k_{i}}(\mathbb{R}^{n})$ for all $1 \leq i \leq m .$ We also write 
$\displaystyle \lvert \boldsymbol{\xi} \rvert = \left( \sum_{i=1}^{m} \lvert \xi_i \rvert^2\right)^{\frac{1}{2}}.$ In general, boldface greek letters always mean an $m$-tuple of the 
concerned objects. 
\item If $\mathbf{k}$ is an $m$-tuple as defined above, we reserve the boldface greek letter $\boldsymbol{\alpha}$ for a multiindex, i.e an $m$-tuple of integers 
$(\alpha_1,\ldots, \alpha_m) $ with  
 $ 0 \leq \alpha_i \leq \left[ \frac{n}{k_i}\right] $ for all $1 \leq i \leq m .$ We write 
 $ \displaystyle \lvert \boldsymbol{\alpha} \rvert$ and $\displaystyle \lvert \boldsymbol{k\alpha}\rvert $ for the sums $ \displaystyle \sum_{i=1}^{m} \alpha_i$ and  
 $ \displaystyle\sum_{i=1}^{m} k_i\alpha_i ,$ respectively. 
 \item For any $\mathbf{k}$ and $\boldsymbol{\alpha}$, as defined above, such that $\displaystyle 1 \leq \lvert \boldsymbol{k\alpha}\rvert  \leq n, $ we write 
 $\boldsymbol{\xi^{\alpha}}$ for the wedge product
 $$ \xi_{1}^{\alpha_1} \wedge \ldots \wedge \xi_{m}^{\alpha_m} =  \underbrace{\xi_{1}\wedge\cdots\wedge \xi_{1}}_{\alpha_1 \text{-times}}\wedge 
 \ldots \wedge \underbrace{\xi_{m}\wedge\cdots\wedge \xi_{m}}_{\alpha_{m} \text{-times}} \in \Lambda^{\lvert \boldsymbol{k\alpha}\rvert} (\mathbb{R}^{n}).$$
 Clearly, if $\alpha_{i} = 0$ for some $1 \leq i \leq m,$ $\xi_{i}$ is absent from the product. 
 \item Let $\mathbf{k}$ and $\boldsymbol{\alpha}$ be as defined above. Then for any $ \displaystyle \boldsymbol{\xi}  \in \boldsymbol{\Lambda^k}$ and for 
 any integer $1 \leq s \leq n$, $T_{s}(\boldsymbol{\xi})$ stands for the vector with components $\boldsymbol{\xi^{\alpha}} $, where $\boldsymbol{\alpha}$ 
varies over all possible choices such that $\lvert \boldsymbol{\alpha} \rvert = s,$ as long as there is at least one such non-trivial wedge power. As an example, if $m=3,$ then we immediately see that
\begin{align*}
 T_{1}(\boldsymbol{\xi}) &= \left( \xi_{1}, \xi_{2}, \xi_{3}\right) , \\ 
 T_{2}(\boldsymbol{\xi}) &= \left( \xi_{1}^{2}, \xi_{1}\wedge \xi_{2}, \xi_{1}\wedge \xi_{3}, \xi_{2}^{2}, \xi_{2}\wedge \xi_{3}, \xi_{3}^{2}  \right)  \text{ etc.}
\end{align*}
$N(\boldsymbol{k})$ stands for the largest integer $s$ for which there is at least one such non-trivial wedge power, i.e 
$$N(\boldsymbol{k}) = \max \begin{aligned}[t] \left\lbrace
                                         s \in \mathbb{N}: \exists \boldsymbol{\alpha} \text{ with } \lvert \boldsymbol{\alpha}\right. \rvert=s \text{ such}&\text{ that } 
                                         \boldsymbol{\xi^{\alpha}} \neq 0 
 \\  &\left. \text{ for some } \boldsymbol{\xi} \in \boldsymbol{\Lambda^{k}} \right\rbrace .
                                       \end{aligned}$$ 
$T(\boldsymbol{\xi})$ stands for the vector $T(\boldsymbol{\xi}) = \left(T_{1}(\boldsymbol{\xi}), \ldots, T_{N(\boldsymbol{k})}(\boldsymbol{\xi}) \right) ,$ whose 
number of components is denoted by $\tau(n,\boldsymbol{k}), $ i.e $T(\boldsymbol{\xi}) \in \mathbb{R}^{\tau(n,\boldsymbol{k})}.$
 \end{itemize}
\item Let $\boldsymbol{p} = (p_1,\ldots, p_m)$ where $1\leq p_i\leq \infty$ for all $1 \leq i \leq m .$ Let $\Omega \subset \mathbb{R}^{n}$ be open, bounded and smooth.
Let $\nu = \left( \nu_{1},\ldots, \nu_{n} \right)$ denote the outer normal  on $\partial\Omega,$ identified with the $1$-form 
$\displaystyle \nu = \sum_{i=1}^{n} \nu_{i} e^{i} .$ Note that $\nu$ used as a subscript or superscript still denotes just an index and not the normal. There is 
little chance of confusion since the intended meaning is always clear from the context. 
\begin{itemize}
\item Let $0 \leq k \leq n-1$ be an integer and $ 1 \leq p \leq \infty.$ Then we define the following spaces. 
\begin{align*}
    W^{d,p}(\Omega; \Lambda^{k}) &= \left\lbrace \omega \in L^{p}(\Omega; \Lambda^{k}), d\omega \in L^{p}(\Omega; \Lambda^{k+1}) \right\rbrace, \\
    W_{T}^{d,p}(\Omega; \Lambda^{k}) &= \left\lbrace \omega \in L^{p}(\Omega; \Lambda^{k}), d\omega \in L^{p}(\Omega; \Lambda^{k+1}), \nu\wedge \omega = 0 
    \text{ on } \partial\Omega \right\rbrace, \\
    W_{N}^{d,p}(\Omega; \Lambda^{k}) &= \left\lbrace \omega \in L^{p}(\Omega; \Lambda^{k}), d\omega \in L^{p}(\Omega; \Lambda^{k+1}), \nu\lrcorner \omega = 0 
    \text{ on } \partial\Omega \right\rbrace,
   \end{align*}
   and similarly the spaces $W_{T}^{1,p}(\Omega; \Lambda^{k}) $ and $W_{N}^{1,p}(\Omega; \Lambda^{k}) .$ Also, we define, 
   $$W_{\delta, T}^{d,p}(\Omega; \Lambda^{k}) = \left\lbrace \omega \in W_{T}^{d,p}(\Omega; \Lambda^{k}) : \delta\omega = 0 \text{ in }
   \Omega \right\rbrace, $$
   and similarly $W_{\delta,T}^{1,p}(\Omega; \Lambda^{k}) .$ We also denote harmonic $k$-fields, harmonic $k$-fields with vanishing tangential component on the boundary and 
   harmonic $k$-fields with vanishing normal component on the boundary by the symbols $\mathcal{H}(\Omega,\boldsymbol{\Lambda^k} ), \mathcal{H}_{T}(\Omega,\boldsymbol{\Lambda^k} )$
    and $\mathcal{H}_{N}(\Omega,\boldsymbol{\Lambda^k} ),$ respectively. 
 \item We define the spaces $L^{\boldsymbol{p}}(\Omega,\boldsymbol{\Lambda^k} ),$ 
  $W^{1,{\boldsymbol{p}}}(\Omega,\boldsymbol{\Lambda^k} )$, $ W^{d,{\boldsymbol{p}}}(\Omega,\boldsymbol{\Lambda^k} ),$ and also the spaces 
  $W_{0}^{1,{\boldsymbol{p}}}(\Omega,\boldsymbol{\Lambda^k} ), W_{T}^{d,{\boldsymbol{p}}}(\Omega,\boldsymbol{\Lambda^k} ), 
  W_{\delta, T}^{d,{\boldsymbol{p}}}(\Omega,\boldsymbol{\Lambda^k} )$ etc, to be the corresponding product spaces. E.g.
   $$ W^{d,{\boldsymbol{p}}}(\Omega,\boldsymbol{\Lambda^k} ) = \prod_{i=1}^m W^{d, p_i}(\Omega, \Lambda^{k_i}).$$
   They are obviously also endowed with the corresponding product norms.
   When $p_i = \infty$ for all $1 \leq i \leq m ,$ we denote the corresponding spaces by $L^{\boldsymbol{\infty}}$ , $W^{1, \boldsymbol{\infty}}$ etc.

   \item In the same manner, $\displaystyle \boldsymbol{\omega}^{\nu} \boldsymbol{\rightharpoonup} \boldsymbol{\omega} \text{ in } W^{d,\boldsymbol{p}}\left(\Omega;\Lambda^{\boldsymbol{k -1}}\right)$
  will stand for a shorthand of $$ {\omega}^{\nu}_{i}\rightharpoonup \omega_{i}\text{ in }W^{d,p_i}\left(  \Omega ;\Lambda^{k_i -1}\right) \quad 
  ( \stackrel{\ast}{\rightharpoonup} \text{ if } p_{i}= \infty ),$$
for all $1 \leq i \leq m ,$ and 
$\displaystyle f\left( \boldsymbol{d \omega}^{\nu}\right) \rightharpoonup f\left( \boldsymbol{d\omega} \right) \text{ in }\mathcal{D}'(\Omega)$ 
will mean 
$$ f\left( d\omega_1^{\nu}, \ldots, d\omega_m^{\nu}\right) \rightharpoonup f\left( d\omega_1, \ldots, d\omega_m\right) \text{ in }\mathcal{D}'(\Omega).$$
\end{itemize}
\end{enumerate}

\section{Notions of Convexity}\label{notionsofconvexity}

\subsection{Definitions}
We start with the different notions of convexity and affinity. From here onwards, we are going to employ the boldface multiindex notations 
quite freely (Section \ref{notations} lists in detail all the notations that are employed).
\begin{definition}
Let $1\leq k_i\leq n$ for all $1 \leq i \leq m $ and $\displaystyle f:\prod_{i=1}^m \Lambda^{k_i}\left(  \mathbb{R}^{n}\right)
\rightarrow\mathbb{R}.$\smallskip

(i) We say that $f$ is  \emph{vectorially ext. one convex}, if the function%
\[
g:t\mapsto g\left(  t\right)  =f\left(  \xi_1 +t\,\alpha\wedge\beta_1 , \xi_2 +t\,\alpha\wedge\beta_2, \ldots , \xi_m +t\,\alpha\wedge\beta_m \right)
\]
is convex  for every collection of $\xi_i \in\Lambda^{k_i}, \   1 \leq i \leq m$, $\alpha\in\Lambda^{1}$ and $\beta_i
\in\Lambda^{k_i-1}$ for all $1 \leq i \leq m$. If the function $g$ is affine we say that $f$ is \emph{vectorially ext.
one affine.}\smallskip

(ii) A Borel measurable and locally bounded function $f$ is said to be
\emph{vectorially ext. quasiconvex}, if for every bounded open set $\Omega,$%
\[
\frac{1}{\lvert \Omega \rvert}\int_{\Omega}f\left(  \xi_1 +d\omega_1(x) , \xi_2 +d\omega_2(x), \ldots , \xi_m +d\omega_m(x) \right)
 \geq f\left(  \xi_1 , \xi_2 , \ldots , \xi_m \right)
\]
for every collection of $\xi_i \in\Lambda^{k_i}$ and $\omega_i \in W_{0}^{1,\infty}\left(  \Omega;\Lambda^{k_i-1}\right)$ with $     1 \leq i \leq m.$ If
equality holds, we say that $f$ is \emph{vectorially ext. quasiaffine.}\smallskip

(iii) We say that $f$ is \emph{vectorially ext. polyconvex}, if there exists a convex
function $F$ such that%
\[
f\left( \boldsymbol{\xi}\right)  =F\left(  T( \boldsymbol{\xi}) \right)  ,
\]
where $T(\boldsymbol{\xi})$ stands for the vector with components $\boldsymbol{\xi^{\alpha}} $, where $\boldsymbol{\alpha}$ 
varies over all possible choices such that $1 \leq  \lvert \boldsymbol{k\alpha} \rvert \leq n.$ (see section \ref{notations} for the notations).
If $F$ is affine, we say that $f$ is \emph{vectorially ext. polyaffine.}
\end{definition}
\begin{remark}
 (i)  The abbreviation ext. stands for
exterior, which refers to the exterior product in the first and third definitions and for the exterior derivative
for the second one.\smallskip

\noindent (ii) When $m=1,$ the notions of vectorial ext. polyconvexity, 
vectorial ext. quasiconvexity  and vectorial ext.  one convexity reduce to the ones introduced in \mbox{\cite{BandDacSil}}, namely, ext. polyconvexity, 
ext. quasiconvexity  and ext.  one convexity respectively.
\end{remark}

\begin{remark}
 The definition of \emph{vectorial ext. quasiconvexity} already appeared in 
 Iwaniec-Lutoborski \cite{iwaniec-lutoborski-null-lagrangian}, which the authors simply called \emph{quasiconvexity}. In the same article, the authors also introduce  
 another convexity notion, which they called \emph{polyconvexity}. But the definition of \emph{polyconvexity} introduced in 
 Iwaniec-Lutoborski \cite{iwaniec-lutoborski-null-lagrangian} is not equivalent to vectorial ext. polyconvexity. See remark \ref{polyconvexityremark} for more on this.
\end{remark}

\begin{remark}
 When $k_{i} = 1$ for all $1 \leq i \leq m,$ for each $\boldsymbol{\xi} \in \boldsymbol{\Lambda^{k}}$, by identifying $\xi_{i} \in \Lambda^{1}$ as 
 the $i$-th row, $\boldsymbol{\xi}$ can be 
 written as a $m\times n$ matrix. With this identification, the notions of vectorial ext. polyconvexity, 
vectorial ext. quasiconvexity  and vectorial ext.  one convexity are exactly the notions of polyconvexity, quasiconvexity and rank one convexity, respectively. 
\end{remark}

By requiring these properties to hold for each \emph{factor} while the others are kept fixed, we can define the corresponding `separate convexity' notions. 
\begin{definition}
 Let $1\leq k_i\leq n$ for all $1 \leq i \leq m $ and $\displaystyle f:\prod_{i=1}^m \Lambda^{k_i}\left(  \mathbb{R}^{n}\right)
\rightarrow\mathbb{R}.$\smallskip

(i) We say that $f$ is  \emph{separately ext. one convex} or \emph{ext. one convex with respect to each factor}, if for 
every $1 \leq i \leq m,$ the function $g_{i}: \Lambda^{k_{i}} \rightarrow \mathbb{R},$ given by, %
$$ g_{i}(\xi ) = f(\eta_{1}, \ldots, \eta_{i-1}, \xi, \eta_{i+1}, \ldots, \eta_{m}) $$
is ext. one convex for every collection of $\eta_{j} \in \Lambda^{k_{j}} ,$ $1 \leq j \leq m,$ $ j \neq i.$ We say $f$ is  \emph{separately ext. one affine} if 
$g_{i}$s are  ext. one affine.\smallskip

(ii) A Borel measurable and locally bounded function $f$ is said to be
\emph{separately ext. quasiconvex} or \emph{ext. quasiconvex with respect to each factor}, if for every $1 \leq i \leq m,$ the function $g_{i}: \Lambda^{k_{i}} \rightarrow \mathbb{R},$ given by, %
$$ g_{i}(\xi ) = f(\eta_{1}, \ldots, \eta_{i-1}, \xi, \eta_{i+1}, \ldots, \eta_{m}) $$
is ext. quasiconvex for every collection of $\eta_{j} \in \Lambda^{k_{j}} ,$ $1 \leq j \leq m,$ $ j \neq i.$ We say $f$ is  \emph{separately ext. quasiaffine} if 
$g_{i}$s are  ext. quasiaffine.\smallskip 

(iii) We say that $f$ is \emph{separately ext. polyconvex} or \emph{ext. polyconvex with respect to each factor}, if for every $1 \leq i \leq m,$ the function $g_{i}: \Lambda^{k_{i}} \rightarrow \mathbb{R},$ given by, %
$$ g_{i}(\xi ) = f(\eta_{1}, \ldots, \eta_{i-1}, \xi, \eta_{i+1}, \ldots, \eta_{m}) $$
is ext. polyconvex for every collection of $\eta_{j} \in \Lambda^{k_{j}} ,$ $1 \leq j \leq m,$ $ j \neq i.$ We say $f$ is  \emph{separately ext. polyaffine} if 
$g_{i}$s are  ext. polyaffine.

\end{definition}

Note that the notions of \emph{separately ext. one affine}, \emph{separately ext. quasiaffine} and \emph{separately ext. polyaffine} are all equivalent. It is easy to see from the definitions, using the relations between ext. polyconvexity, ext. quasiconvexity and 
ext. one convexity (cf. Theorem 2.8(i) in \cite{BandDacSil}), that 
\begin{itemize}
 \item $f \text{ \emph{vectorially ext. one convex} } \Rightarrow f \text{ \emph{separately ext. one convex}.} $
 \item $\!\begin{aligned}[t]
           f \text{ \emph{vectorially ext. quasiconvex} } &\Rightarrow f \text{ \emph{separately ext. quasiconvex} } \\
                                                           &\Rightarrow f \text{ \emph{separately ext. one convex}.}
          \end{aligned}$
\item $\!\begin{aligned}[t]
          f \text{ \emph{vectorially ext. polyconvex}} &\Rightarrow f \text{ \emph{separately ext. polyconvex}} \\
                                                        &\Rightarrow f \text{ \emph{separately ext. quasiconvex}} \\
                                                         &\Rightarrow f \text{ \emph{separately ext. one convex}}.
         \end{aligned}$  
\end{itemize}
Note that the notion of a separately convex function is very different. For 
$f$ to be separately convex, we require convexity with respect to each \emph{component}, not each \emph{factor}. All the convexity notions above implies separate convexity of $f$, but 
none is implied by it. As an example, the function defined by the multiplication of all the components of all the factors, i.e $f(\xi_{1}, \ldots,  \xi_{m}) = \prod\limits_{i=1}^{m} \prod\limits_{I \in \mathcal{T}^{k_{i}}} \xi_{i}^{I},$ is clearly separately convex, but 
not separately ext. one convex and thus none of the others as well.\smallskip

As in \cite{BandDacSil}, we can use Hodge duality to extend these notions of convexity to the ones related to interior product and $\delta$-operator.
We shall discuss vectorial ext. convexity properties only. Vectorial int. convexity notions can be handled analogously.

\subsection{Basic Properties}
The different notions of vectorial ext. convexity are related as follows.
\begin{theorem}
\label{implications for vectorial ext convexity} Let $\displaystyle f:\boldsymbol{\Lambda^k}\rightarrow\mathbb{R}.$
 Then%
\begin{align*}
 f \text{convex} \Rightarrow f\text{  vectorially ext. polyconvex }&\Rightarrow\text{
}f\text{ vectorially ext. quasiconvex }\\
&\Rightarrow\text{ }f\text{ vectorially ext. one convex.}
\end{align*}
Moreover if $f:\boldsymbol{\Lambda^{k}}\left(  \mathbb{R}^{n}\right)  \rightarrow
\mathbb{R}$ is vectorially ext. one convex, then $f$ is locally Lipschitz.
\end{theorem}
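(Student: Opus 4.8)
The plan is to reduce the chain of implications to the single-form case $m=1$, which is Theorem 2.8(i) of \cite{BandDacSil}, except that the ``convex $\Rightarrow$ vectorially ext. polyconvex'' step and the local Lipschitz conclusion need genuine (if routine) arguments. I would proceed implication by implication. For \emph{convex $\Rightarrow$ vectorially ext. polyconvex}, observe that among the components of $T(\boldsymbol{\xi})$ one finds $T_1(\boldsymbol{\xi}) = (\xi_1,\ldots,\xi_m) = \boldsymbol{\xi}$ itself (the choice $|\boldsymbol{\alpha}|=1$); hence if $f$ is convex we may simply take $F$ to be $f$ composed with the coordinate projection of $\mathbb{R}^{\tau(n,\boldsymbol{k})}$ onto the $T_1$-slot, which is convex, and the defining identity $f(\boldsymbol{\xi}) = F(T(\boldsymbol{\xi}))$ holds.

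For \emph{vectorially ext. polyconvex $\Rightarrow$ vectorially ext. quasiconvex}, I would use Jensen's inequality together with the fact that each component $\boldsymbol{\xi}^{\boldsymbol{\alpha}}$ of $T$ is vectorially ext. quasiaffine. The key point is that for $\omega_i \in W_0^{1,\infty}(\Omega;\Lambda^{k_i-1})$ the averaged wedge powers satisfy
\[
\frac{1}{|\Omega|}\int_\Omega (\boldsymbol{\xi}+d\boldsymbol{\omega})^{\boldsymbol{\alpha}}(x)\,dx = \boldsymbol{\xi}^{\boldsymbol{\alpha}},
\]
i.e. $T$ is componentwise a null Lagrangian; this is exactly the content of the characterization of vectorially ext. quasiaffine functions (referenced in the Introduction as proved in \cite{silthesis}), and follows from the fact that $d(\xi_i + d\omega_i) = 0$ and an integration-by-parts/Stokes argument on wedge products of closed forms with exact forms vanishing on $\partial\Omega$. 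Granting this, convexity of $F$ gives
\[
\frac{1}{|\Omega|}\int_\Omega f(\boldsymbol{\xi}+d\boldsymbol{\omega}) = \frac{1}{|\Omega|}\int_\Omega F\bigl(T(\boldsymbol{\xi}+d\boldsymbol{\omega})\bigr) \ge F\Bigl(\frac{1}{|\Omega|}\int_\Omega T(\boldsymbol{\xi}+d\boldsymbol{\omega})\Bigr) = F(T(\boldsymbol{\xi})) = f(\boldsymbol{\xi}).
\]
For \emph{vectorially ext. quasiconvex $\Rightarrow$ vectorially ext. one convex}, I would use the standard localization argument: given $\boldsymbol{\xi}$, $\alpha\in\Lambda^1$, $\beta_i\in\Lambda^{k_i-1}$, build a one-dimensional laminate by choosing $\omega_i$ of the form $\varphi(\langle\alpha;x\rangle)\beta_i$ with $\varphi$ a sawtooth, so that $d\omega_i = \varphi'(\langle\alpha;x\rangle)\,\alpha\wedge\beta_i$ takes essentially two values $t_1\,\alpha\wedge\beta_i$ and $t_2\,\alpha\wedge\beta_i$; feeding these into the quasiconvexity inequality yields midpoint convexity of $g$, and continuity of $f$ (which at this stage follows from being vectorially ext. quasiconvex and locally bounded) upgrades this to convexity.

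Finally, for the local Lipschitz conclusion assuming only vectorial ext. one convexity: I expect this to be the main obstacle, since it must reproduce the argument behind the $m=1$ case of \cite{BandDacSil} but now tracking all $m$ factors simultaneously. The strategy is the one used for separately convex and for rank-one convex functions: a vectorially ext. one convex function is in particular separately convex (as noted in the text), hence locally bounded above automatically and, being convex along every ``ext. one direction'' $\alpha\wedge\beta_i$, one shows it is locally bounded below and then that difference quotients along such directions are locally bounded; the finitely many directions $\{e^j\wedge e^I\}$ spanning each $\Lambda^{k_i}$ then give a Lipschitz estimate on compact subsets by a chaining argument through intermediate points, exactly as in Dacorogna \cite{DCV2} for the classical case. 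The delicate part is verifying that one can connect any two nearby points $\boldsymbol{\xi},\boldsymbol{\eta}$ by a bounded-length path each of whose segments is a pure ext. one direction in a single factor — this requires knowing that the directions $\alpha\wedge\beta$ span $\Lambda^{k_i}(\mathbb{R}^n)$, which is the classical fact that every $k$-covector is a sum of at most $\binom{n}{k}$ decomposable ones; I would isolate this as the one nontrivial linear-algebraic input and then the Lipschitz bound follows mechanically.
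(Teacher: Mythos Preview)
Your proposal is essentially correct and follows the same route as the paper, but two points deserve comment.

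First, in the polyconvex $\Rightarrow$ quasiconvex step you invoke the characterization theorem for vectorially ext.\ quasiaffine functions to justify that $(\boldsymbol{\xi}+d\boldsymbol{\omega})^{\boldsymbol{\alpha}}$ has the correct average. Beware that in this paper that characterization theorem is proved \emph{using} the present theorem, so citing it here would be circular. The paper instead proves the identity
\[
\int_\Omega (\boldsymbol{\xi}+d\boldsymbol{\omega})^{\boldsymbol{\alpha}} = \boldsymbol{\xi}^{\boldsymbol{\alpha}}\operatorname{meas}(\Omega)
\]
directly by induction on $|\boldsymbol{\alpha}|$: peel off one factor $(\xi_i+d\omega_i)$, write $d\omega_i\wedge(\boldsymbol{\xi}+d\boldsymbol{\omega})^{\boldsymbol{\beta}} = d\bigl[\omega_i\wedge(\boldsymbol{\xi}+d\boldsymbol{\omega})^{\boldsymbol{\beta}}\bigr]$, and integrate by parts. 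You do gesture at this (``an integration-by-parts/Stokes argument''), so this is only a matter of making the argument self-contained rather than a real gap.

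Second, your treatment of the local Lipschitz conclusion is considerably more elaborate than needed. You already observe that vectorially ext.\ one convex implies separately convex (convex in each scalar component). The paper stops right there: it is a standard fact (cf.\ \cite{DCV2}) that any separately convex function on a finite-dimensional space is locally Lipschitz. There is no need to chain along ext.\ one directions $\alpha\wedge\beta$ or to worry about spanning $\Lambda^{k_i}$ by decomposable forms; separate convexity along the coordinate axes $e^I$ already suffices, and the ``delicate part'' you anticipate does not arise.
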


\begin{proof}
The proof is very similar to the proof of theorem 2.8 in \cite{BandDacSil} (see \cite{silthesis} for a more detailed proof). We only mention here the essential differences. 
The implication that 
$ f$ convex implies $f$ vectorially ext. polyconvex is trivial.\smallskip

\noindent To prove the implication,
$$f\text{  vectorially ext. polyconvex }\Rightarrow\text{
}f\text{ vectorially ext. quasiconvex }, $$
the argument using Jensen's inequality is exactly the same as in theorem 2.8 in \cite{BandDacSil}, as soon as we show
$$ \int_{\Omega}\left(  \boldsymbol{\xi}+\boldsymbol{d\omega}\right)  ^{\boldsymbol{\alpha}}=\boldsymbol{\xi}^{\boldsymbol{\alpha}}\operatorname*{meas}\left( \Omega \right),$$
for any $\boldsymbol{\xi} \in \boldsymbol{\Lambda^{k}},$ for any $\boldsymbol{\omega} \in 
W_{0}^{1,{\boldsymbol{\infty}}}(\Omega,\boldsymbol{\Lambda^k} )$ and for any multiindex $\boldsymbol{\alpha}.$ We prove this using induction over 
 $ \displaystyle \lvert \boldsymbol{\alpha} \rvert.$ The case $ \displaystyle \lvert \boldsymbol{\alpha} \rvert= 1$ easily follows from integration by parts. So we assume 
 $ \displaystyle \lvert \boldsymbol{\alpha} \rvert > 1.$ Thus, there exists $i$ such that $\alpha_{i} \geq 2.$
 Now, we have, 
 \begin{align*}
  \left(  \boldsymbol{\xi}+\boldsymbol{d\omega}\right)^{\boldsymbol{\alpha}}  &  =\xi_{i}\wedge  \left(  \boldsymbol{\xi}+\boldsymbol{d\omega}\right)^{\boldsymbol{\beta}}
+d\omega_{i}\wedge  \left(  \boldsymbol{\xi}+\boldsymbol{d\omega}\right)^{\boldsymbol{\beta}} \smallskip\\
&  = \xi_{i}\wedge  \left(  \boldsymbol{\xi}+\boldsymbol{d\omega}\right)^{\boldsymbol{\beta}}  +d\left[  \omega_{i}\wedge 
\left(  \boldsymbol{\xi}+\boldsymbol{d\omega}\right)^{\boldsymbol{\beta}}  \right],
\end{align*}
where $\boldsymbol{\beta}$ is a multiindex with $\beta_{i} = \alpha_{i} -1$ and $\beta_{j} = \alpha_{j}$ for all $1 \leq j \leq m, i\neq j.$ Since 
$\displaystyle \lvert \boldsymbol{\beta} \rvert =  \lvert \boldsymbol{\alpha} \rvert -1,$ integrating the above and using induction for the first integral and integration by parts along with 
the fact that 
$\omega_{i} = 0$ on $\partial\Omega$ for the second, we obtain the result.\smallskip

\noindent The implication
$$ f\text{ vectorially ext. quasiconvex }\Rightarrow\text{ }f\text{ vectorially ext. one convex},$$ is proved by the same arguments as in theorem 2.8 in \cite{BandDacSil}, using 
lemma 2.7 in \cite{BandDacSil} for each factor.\smallskip

\noindent The fact that $f$ is locally Lipschitz follows once again from the observation that
any separately ext. one convex function is separately convex. 
\end{proof}\smallskip 

\noindent We can have another formulation of vectorial ext. polyconvexity. The proof of which is similar to Proposition 2.14 in \cite{BandDacSil} and is omitted.
\begin{proposition}
Let $\displaystyle f:\boldsymbol{\Lambda^k}\rightarrow\mathbb{R}.$
Then,
the function $f$ is ext. polyconvex if and only if, for every $ \displaystyle \boldsymbol{\xi}\in 
\boldsymbol{\Lambda^k} $, there 
exist $ c_{ \alpha} = c_{ \alpha}(\boldsymbol{\xi}) \in\Lambda^{ \lvert \boldsymbol{k\alpha} \rvert }(\mathbb{R}^n),$ for every $\boldsymbol{\alpha}$ 
with  $0 \leq \lvert \boldsymbol{k\alpha} \rvert \leq n$, such that  %
\[
f\left(  \boldsymbol{\eta}\right)  \geq f\left(  \boldsymbol{\xi}\right)  +\sum_{\boldsymbol{\alpha}  }\left\langle c_{s}\left( \boldsymbol{\xi}\right)  ;
\boldsymbol{\eta}^{\boldsymbol{\alpha}}-\boldsymbol{\xi}^{\boldsymbol{\alpha}}\right\rangle ,\quad\text{for every }\boldsymbol{\eta} \in\boldsymbol{\Lambda^{k}}.
\]
\end{proposition}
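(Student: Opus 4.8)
The plan is to follow the classical route to characterizations of polyconvexity (cf. the proof of Proposition 2.14 in \cite{BandDacSil} and Theorem 5.6 in \cite{DCV2}), transported to the present setting via the identification of $\mathbb{R}^{\tau(n,\boldsymbol{k})}$ with the product $\prod_{\boldsymbol{\alpha}}\Lambda^{\lvert\boldsymbol{k\alpha}\rvert}(\mathbb{R}^{n})$, the product being taken over all multiindices $\boldsymbol{\alpha}$ with $1\leq\lvert\boldsymbol{k\alpha}\rvert\leq n$; we use the scalar product $\langle\,\cdot\,;\,\cdot\,\rangle$ on exterior forms to pair the $\boldsymbol{\alpha}$-block of a vector $X\in\mathbb{R}^{\tau(n,\boldsymbol{k})}$ with a form $c_{\boldsymbol{\alpha}}\in\Lambda^{\lvert\boldsymbol{k\alpha}\rvert}(\mathbb{R}^{n})$. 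Throughout, $T(\boldsymbol{\eta})$ is viewed as the element of $\mathbb{R}^{\tau(n,\boldsymbol{k})}$ whose $\boldsymbol{\alpha}$-block is $\boldsymbol{\eta}^{\boldsymbol{\alpha}}$. (Allowing also $\boldsymbol{\alpha}=0$ is harmless, since then $\boldsymbol{\eta}^{\boldsymbol{\alpha}}-\boldsymbol{\xi}^{\boldsymbol{\alpha}}=1-1=0$.)

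For necessity, I would start from the definition: there is a convex $F\colon\mathbb{R}^{\tau(n,\boldsymbol{k})}\to\mathbb{R}$ with $f=F\circ T$. Being finite and convex on all of $\mathbb{R}^{\tau(n,\boldsymbol{k})}$, $F$ is locally Lipschitz, so its subdifferential is nonempty at every point. Fixing $\boldsymbol{\xi}$, I pick $c\in\partial F(T(\boldsymbol{\xi}))$, write its blocks as $c_{\boldsymbol{\alpha}}=c_{\boldsymbol{\alpha}}(\boldsymbol{\xi})$, and evaluate the subgradient inequality $F(Y)\geq F(T(\boldsymbol{\xi}))+\langle c,\,Y-T(\boldsymbol{\xi})\rangle$ at $Y=T(\boldsymbol{\eta})$. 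Since the $\boldsymbol{\alpha}$-block of $T(\boldsymbol{\eta})-T(\boldsymbol{\xi})$ is $\boldsymbol{\eta}^{\boldsymbol{\alpha}}-\boldsymbol{\xi}^{\boldsymbol{\alpha}}$, this is precisely the asserted inequality.

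For sufficiency, given the forms $c_{\boldsymbol{\alpha}}(\boldsymbol{\xi})$, I would set
\[
F(X):=\sup_{\boldsymbol{\xi}\in\boldsymbol{\Lambda^{k}}}\Big\{\,f(\boldsymbol{\xi})+\sum_{\boldsymbol{\alpha}}\big\langle c_{\boldsymbol{\alpha}}(\boldsymbol{\xi})\,;\,X_{\boldsymbol{\alpha}}-\boldsymbol{\xi}^{\boldsymbol{\alpha}}\big\rangle\,\Big\},\qquad X=(X_{\boldsymbol{\alpha}})_{\boldsymbol{\alpha}}\in\mathbb{R}^{\tau(n,\boldsymbol{k})},
\]
which is convex (even lower semicontinuous) as a supremum of affine functions of $X$. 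Choosing $\boldsymbol{\xi}=\boldsymbol{\eta}$ in the supremum gives $F(T(\boldsymbol{\eta}))\geq f(\boldsymbol{\eta})$, while the hypothesis applied at $\boldsymbol{\eta}$ shows that every term of the supremum defining $F(T(\boldsymbol{\eta}))$ is $\leq f(\boldsymbol{\eta})$; hence $F\circ T=f$ with $F$ convex, as required.

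The step I expect to be the only real obstacle is that the $F$ just constructed is a priori only $\mathbb{R}\cup\{+\infty\}$-valued, whereas the definition of vectorial ext. polyconvexity asks for a real-valued $F$. This is handled exactly as in the classical case (see the proof of Theorem 5.6 in \cite{DCV2} and of Proposition 2.14 in \cite{BandDacSil}): using local boundedness of $f$ one checks that $F$ is finite, indeed locally bounded, on the convex hull of $T(\boldsymbol{\Lambda^{k}})$, and then, if necessary, one replaces $F$ by a finite convex extension (for instance via a retraction onto that convex hull, or simply by adopting the customary convention allowing $F$ to take the value $+\infty$); since only the values of $F$ on $T(\boldsymbol{\Lambda^{k}})$ enter the identity $f=F\circ T$, this adjustment changes nothing. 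The remaining verifications — convexity of $F$, the two inequalities for $F(T(\boldsymbol{\eta}))$, and the block/form bookkeeping — are routine, which is why the details are omitted.
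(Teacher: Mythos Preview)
Your proposal is correct and follows precisely the approach the paper itself indicates: the paper omits the proof, stating only that it ``is similar to Proposition 2.14 in \cite{BandDacSil},'' and your argument is exactly the standard subgradient/supremum-of-affines route from that reference (and Theorem~5.6 in \cite{DCV2}) transported to the vectorial setting via the map $T$. There is nothing to add.
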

\begin{remark}\label{polyconvexityremark}
 Comparison with the definition of \emph{polyconvexity} introduced in definition $10.1$ in 
 Iwaniec-Lutoborski \cite{iwaniec-lutoborski-null-lagrangian}, one easily sees that their definition allows only the case $\alpha_{i} \in \lbrace 0, 1 \rbrace$  for all $1 \leq i \leq m $. 
We remark that unless $k_i$s are all odd integers, these two classes of 
polyconvex functions do not coincide and ours is strictly larger. For example, identifying $\mathbb{R}$ with $\Lambda^{n},$ the function $f_{1}: \Lambda^{k_{1}}\times \Lambda^{k_{2}} \rightarrow \mathbb{R} $ given by,
 $$ f_{1}(\xi_{1}, \xi_{2}) = \langle c; \xi_{1} \wedge \xi_{2} \rangle \quad \text{ for every }\xi_{1} \in \Lambda^{k_{1}}, \xi_{2} \in \Lambda^{k_{2}}$$
 where $c \in \Lambda^{(k_{1} + k_{2})}$ is a constant form, is \emph{polyaffine} in the sense of Iwaniec-Lutoborski \cite{iwaniec-lutoborski-null-lagrangian} and also 
 \emph{vectorially ext. polyaffine}. However, the function $f_{2}: \Lambda^{k_{1}}\times \Lambda^{k_{2}} \rightarrow \mathbb{R} $ given by,
 $$ f_{2}(\xi_{1}, \xi_{2}) = \langle c; \xi_{1} \wedge \xi_{1} \rangle \quad \text{ for every }\xi_{1} \in \Lambda^{k_{1}}, \xi_{2} \in \Lambda^{k_{2}}$$
 where $c \in \Lambda^{2k_{1}}$ is a constant, is \emph{vectorially ext. polyaffine}, but not \emph{polyaffine} in the sense of 
 Iwaniec-Lutoborski \cite{iwaniec-lutoborski-null-lagrangian}, unless $k_{1}$ is odd or $2k_{1} > n.$ Note that the crucial point is the self-wedge product, 
 not the fact that $f_{2}$ is independent of $\xi_{2}.$
 $f_{1}+ f_{2}$ is also \emph{vectorially ext. polyaffine}, but not \emph{polyaffine} in the sense of 
 Iwaniec-Lutoborski \mbox{\cite{iwaniec-lutoborski-null-lagrangian}}. Note also that it is easy to see, by integrating by parts that $f_{1}, f_{2}$ and $f_{1} + f_{2}$ are 
 all \emph{vectorially ext. quasiaffine} and hence are also \emph{quasiaffine} in the 
 sense of Iwaniec-Lutoborski \cite{iwaniec-lutoborski-null-lagrangian}. Also, when $m=1$, i.e there is only one differential form, reducing the problem to the functionals having 
 the form $\int_{\Omega} f(d\omega),$ their definition of 
  \emph{polyconvexity} coincide with usual convexity. On the other hand, when $m=1$, \emph{vectorial ext. polyconvexity} reduces to \emph{ext. polyconvexity}, which 
  is much weaker than convexity and has been discussed 
  in detail in \cite{BandDacSil}. 
\end{remark}

\subsection{The quasiaffine case}
We now prove the basic characterization theorem for vectorially ext. quasiaffine functions. In the special case when $k_{i}=1$ for all $1 \leq i \leq m,$ this immediately implies 
classical theorem of Ball \cite{Ballquasiaffine} with a new proof. In a sense, this theorem also `explains' the appearance of determinants and adjugates in the classical theorem. Determinants and adjugates appear as they are 
precisely the `wedge products' in the classical case.  
\begin{theorem}
\label{Thm vectorial ext. quasiaffine}
Let $\displaystyle f:\boldsymbol{\Lambda^k}\rightarrow\mathbb{R}.$
The following statements are then equivalent.\smallskip

\textbf{(i)} $f$ is vectorially ext. polyaffine.\smallskip

\textbf{(ii)} $f$ is vectorially ext. quasiaffine.\smallskip

\textbf{(iii)} $f$ is vectorially ext. one affine.\smallskip

\textbf{(iv)} There exist $ c_{ \alpha}\in\Lambda^{ \lvert \boldsymbol{k\alpha} \rvert }(\mathbb{R}^n),$ for every $\boldsymbol{\alpha}
= (\alpha_1, \ldots, \alpha_m )$ such that $ 0 \leq \alpha_i \leq \left[ \frac{n}{k_i} \right]$ 
for all $1 \leq i \leq m$ and $0 \leq \lvert \boldsymbol{k\alpha} \rvert \leq n$, such that for every $ \displaystyle \boldsymbol{\xi}\in 
\boldsymbol{\Lambda^k} $, %
\[
f\left(  \boldsymbol{\xi} \right)  =\sum_{ \substack{ \alpha,  \\ 0 \leq \lvert \boldsymbol{k\alpha} \rvert \leq n } }\left\langle c_{\alpha} %
;\boldsymbol{\xi^{\alpha}}\right\rangle .
\]

\end{theorem}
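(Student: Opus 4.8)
The plan is to prove the cyclic chain of implications $(iv) \Rightarrow (i) \Rightarrow (ii) \Rightarrow (iii) \Rightarrow (iv)$, since $(i) \Rightarrow (ii) \Rightarrow (iii)$ is already contained in Theorem \ref{implications for vectorial ext convexity} applied to both $f$ and $-f$ (an affine function being both convex and concave). The implication $(iv) \Rightarrow (i)$ is essentially immediate: if $f(\boldsymbol{\xi}) = \sum_{\boldsymbol{\alpha}} \langle c_{\alpha}; \boldsymbol{\xi^\alpha}\rangle$, then taking $F$ to be the linear map on $\mathbb{R}^{\tau(n,\boldsymbol{k})}$ whose components pair against the $c_\alpha$ (plus a constant term for $\boldsymbol{\alpha}$ with $\lvert\boldsymbol{k\alpha}\rvert = 0$), we get $f(\boldsymbol{\xi}) = F(T(\boldsymbol{\xi}))$ with $F$ affine, which is exactly vectorial ext. polyaffinity. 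So the entire content is in the implication $(iii) \Rightarrow (iv)$.

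For $(iii) \Rightarrow (iv)$, I would argue that a vectorially ext. one affine function must be a polynomial in the components of $\boldsymbol{\xi}$ and then identify which monomials can appear. First, separately ext. one affine in each factor $\xi_i$: fixing the other factors, $g_i(\xi_i) = f(\eta_1,\dots,\xi_i,\dots,\eta_m)$ is ext. one affine in the single-form sense, so by the known characterization for a single form (the $m=1$ case, i.e. Theorem 2.8/the quasiaffine characterization in \cite{BandDacSil}), $g_i$ is a sum $\sum_{r} \langle b_r; \xi_i^r \rangle$ with coefficients depending on the frozen $\eta_j$'s. Iterating this over all $m$ factors shows $f$ is a polynomial in the components of $\boldsymbol{\xi}$ in which each factor $\xi_i$ enters only through its wedge powers $\xi_i^{\alpha_i}$ with $0 \le \alpha_i \le [n/k_i]$; more precisely, collecting terms, $f(\boldsymbol{\xi}) = \sum_{\boldsymbol{\alpha}} \langle c_\alpha; \boldsymbol{\xi^\alpha}\rangle$ for suitable constant forms $c_\alpha$, where a priori the sum runs over all $\boldsymbol{\alpha}$ with $0 \le \alpha_i \le [n/k_i]$. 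It then remains to show that the terms with $\lvert\boldsymbol{k\alpha}\rvert > n$ must vanish — but these are identically zero anyway since $\Lambda^j(\mathbb{R}^n) = 0$ for $j > n$, so $\boldsymbol{\xi^\alpha} = 0$ whenever $\lvert\boldsymbol{k\alpha}\rvert > n$, and we may simply drop them. This yields exactly the representation in $(iv)$.

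The main obstacle I anticipate is making the iteration over factors rigorous: after applying the single-form characterization in the $i$-th slot, the coefficient forms depend on the remaining variables $(\eta_1,\dots,\eta_{i-1},\eta_{i+1},\dots,\eta_m)$, and one must check that these coefficient functions are themselves vectorially (or separately) ext. one affine in the remaining factors so that induction applies, and that everything assembles into a single globally-defined multilinear-in-the-wedge-powers expression rather than merely a local one. The clean way to handle this is to note that ext. one affinity passes to the coefficient functions: the coefficient of a given wedge power $\xi_i^{r}$ in the polynomial expansion is obtained by applying a constant-coefficient linear functional to $f$ evaluated along the one-form perturbation, hence inherits ext. one affinity in the other slots; then a straightforward induction on $m$ closes the argument, with the base case $m=1$ being precisely the characterization theorem for a single differential form proved in \cite{BandDacSil} (and \cite{silthesis}). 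I would also remark, as the statement following the theorem suggests, that specializing to $k_i = 1$ recovers Ball's theorem, since then $\boldsymbol{\xi}$ is an $m \times n$ matrix and the $\boldsymbol{\xi^\alpha}$ are exactly its minors.
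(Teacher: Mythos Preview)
Your proposal is correct and follows essentially the same route as the paper: the cyclic chain $(iv)\Rightarrow(i)\Rightarrow(ii)\Rightarrow(iii)\Rightarrow(iv)$, with the only nontrivial step $(iii)\Rightarrow(iv)$ handled by induction on $m$, peeling off one factor via the single-form characterization from \cite{BandDacSil} and checking that the coefficient forms inherit vectorial ext.\ one affinity in the remaining variables (the paper phrases this last point as ``arguing by degree of homogeneity'', which amounts to the same extraction you describe).
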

\begin{remark}
 If $k_i = 1$ for all $1 \leq i \leq m $, then this theorem recovers the characterization theorem for quasiaffine functions in classical vectorial calculus of variation as a special case. Indeed, let $X \in \mathbb{R}^{m \times n}$ 
 be a matrix, then setting  $ \displaystyle  \xi_i = \sum_{j = 1}^{n} X_{ij} e^j $ for all $1 \leq i \leq m , $ we recover exactly the classical results (cf. 
 Theorem $5.20$ in \cite{DCV2}).
\end{remark}

\begin{proof}
 $ (i) \Rightarrow (ii) \Rightarrow (iii)$ follows from Theorem \ref{implications for vectorial ext convexity}. $(iv) \Rightarrow (i)$ is immediate from the definition of vectorial ext. polyconvexity. So we only 
 need to show $(iii) \Rightarrow (iv).$\smallskip
 
 We show this by induction on $m$. Clearly, for $m = 1$, this is just the characterization theorem for ext. one affine functions, given in 
 theorem  3.3 in \cite{BandDacSil}. We assume the result to be true for 
 $m \leq p-1$ and show it for $m=p.$  
 Now since $f$ is vectorially ext. one affine, it is separately ext. one affine and using ext. one affinity with respect to $\xi_p$, keeping the other variables 
 fixed, we obtain,
 $$ f\left(  \boldsymbol{\xi} \right)  = \sum_{s=1}^{[\frac{n}{k_p}]} \langle c_{s}(\xi_1,\ldots, \xi_{p-1}) ; \xi_p^s \rangle , $$
 where for each $1 \leq s \leq [\frac{n}{k_p}] $, the functions $ \displaystyle  c_{s}: \prod_{i=1}^{p-1} \Lambda^{k_i} \rightarrow \Lambda^{sk_p}$ are such that 
 the map  $(\xi_1,\ldots, \xi_{p-1}) \mapsto  f\left( \xi_1,\ldots, \xi_{p-1}, \xi_p  \right)$ is vectorially ext. one affine for any $\xi_p \in \Lambda^{k_p}.$
 Arguing by degree of homogeneity, this implies that  for each $1 \leq s \leq [\frac{n}{k_p}] $, every component $c_S^{I}$ is 
 vectorially ext. one affine, i.e  $(\xi_1,\ldots, \xi_{p-1}) \mapsto c_{s}^{I}(\xi_1,\ldots, \xi_{p-1})$ is vectorially ext. one affine for any $I \in \mathcal{T}_{sk_p}.$
 Applying the induction hypothesis to each of these components and multiplying out, we indeed obtain the desired result.
\end{proof}
\begin{remark}
 Note that since the proof of Theorem 3.3 in \cite{BandDacSil} does not use the classical result about quasiaffine functions, this really yields a new proof even in the special case 
  of $k_{i}=1$ for all $1 \leq i \leq m.$
\end{remark}

\section{Weak lower semicontinuity}\label{weaklowersemicontinuity}
Now we investigate the relationship between vectorial ext. quasiconvexity of the integrand and sequential weak lower semicontinuity of the integral functionals. 
\subsection{Necessary condition} 
\begin{theorem}[Necessary condition]\label{necessary condition semicontinuity}
Let $\Omega\subset\mathbb{R}^n$ be open, bounded. 
Let $f:\Omega\times\boldsymbol{\Lambda^{k-1}}\times\boldsymbol{\Lambda^k}\rightarrow \mathbb{R}$ be a Carath\'eodory
 function satisfying, for almost all $x\in\Omega$ and for all $ ( \boldsymbol{\omega}, \boldsymbol{\xi} ) \in \boldsymbol{\Lambda^{k-1}}\times\boldsymbol{\Lambda^k},$
\begin{equation}\label{caratheodory}
|f(x,\boldsymbol{\omega}, \boldsymbol{\xi})|\leqslant a(x) +b(\boldsymbol{\omega}, \boldsymbol{\xi}),
\end{equation}
 where 
 $a\in L^1\left(\mathbb{R}^n\right)$, $b\in C\left(\boldsymbol{\Lambda^{k-1}}\times\boldsymbol{\Lambda^k}\right)$ is non-negative. 
 Let the functional $I:W^{d,\boldsymbol{\infty}}\left(\Omega;\boldsymbol{\Lambda^{k-1}}
 \right)\rightarrow \mathbb{R}$, defined by
$$
I(\boldsymbol{\omega}):=\int_{\Omega}f\left(x,\boldsymbol{\omega}(x), \boldsymbol{d\omega}(x)\right)\,dx,\text{ for all }\boldsymbol{\omega}\in W^{d,\boldsymbol{\infty}}\left(\Omega;\boldsymbol{
\Lambda^{k-1}}\right),
$$
be weak $\ast$ lower semicontinuous in $W^{d,\boldsymbol{\infty}}\left(\Omega;\boldsymbol{\Lambda^{k-1}}\right)$. Then, for almost all $x_0\in \Omega$ and for all $\boldsymbol{\omega_0}\in\boldsymbol{\Lambda^{k-1}}$, $\boldsymbol{\xi_0}\in\boldsymbol{\Lambda^{k}}$ and 
$\boldsymbol{\phi}\in W^{d,\boldsymbol{\infty}}\left(D;\boldsymbol{\Lambda^{k}}\right)$,
$$
\int_{D}f\left(x_0,\boldsymbol{\omega_0},\boldsymbol{\xi_0}+\boldsymbol{d\phi}(x)\right)\,dx\geqslant f\left(x_0,\boldsymbol{\omega_0},\boldsymbol{\xi_0}\right),
$$
where $D=(0,1)^n\subset\mathbb{R}^n$. In particular, $\boldsymbol{\xi} \mapsto 
f\left(x,\boldsymbol{\omega}, \boldsymbol{\xi}\right)$ is vectorially ext. quasiconvex for a.e $x \in \Omega$ and for every $\boldsymbol{\omega} \in \boldsymbol{\Lambda^{k-1}}.$  
\end{theorem}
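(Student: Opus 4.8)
The plan is to follow the classical blow-up (or, equivalently, localization-and-rescaling) argument adapted to the differential-form setting, exactly as one does for the necessity of quasiconvexity in the classical vectorial calculus of variations (cf. Dacorogna \cite{DCV2}). First I would fix a Lebesgue point $x_0$ of the relevant maps, fix $\boldsymbol{\omega_0} \in \boldsymbol{\Lambda^{k-1}}$, $\boldsymbol{\xi_0} \in \boldsymbol{\Lambda^k}$, and a test form $\boldsymbol{\phi} \in W^{d,\boldsymbol{\infty}}(D;\boldsymbol{\Lambda^{k-1}})$ which I may assume (by density and a cutoff argument, since $W^{d,\boldsymbol\infty}$ is the relevant space) to satisfy $\nu \wedge \boldsymbol{\phi} = 0$ on $\partial D$, so that extending $\boldsymbol{\phi}$ by $0$ periodically outside $D$ keeps $\boldsymbol{d\phi} \in L^\infty$. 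Then for each $\varepsilon > 0$ I subdivide a small cube $Q(x_0,\rho) \subset \Omega$ into $\sim (\rho/\varepsilon)^n$ subcubes, and on each subcube $Q_j = x_0 + a_j + \varepsilon D$ I set $\boldsymbol{\omega_\varepsilon}(x) = \boldsymbol{\omega_0}\text{-part} + \boldsymbol{\xi_0}\text{-affine primitive}(x) + \varepsilon\,\boldsymbol{\phi}((x - x_0 - a_j)/\varepsilon)$, wedge-degree-counting being handled by the observation that a constant form $\xi_{0,i} \in \Lambda^{k_i}$ has a $(k_i-1)$-form primitive that is linear in $x$; outside $Q(x_0,\rho)$ we take a fixed extension.

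The key steps in order: (1) verify that $\boldsymbol{\omega_\varepsilon}$ is bounded in $W^{d,\boldsymbol\infty}$ uniformly in $\varepsilon$ — here $d\omega_{\varepsilon,i} = \xi_{0,i} + d\phi_i((x-x_0-a_j)/\varepsilon)$ on each subcube, which is uniformly $L^\infty$-bounded, and $\omega_{\varepsilon,i} \to$ (the affine primitive of $\xi_{0,i}$) uniformly since the perturbation has size $O(\varepsilon)$; (2) deduce $\boldsymbol{\omega_\varepsilon} \stackrel{*}{\rightharpoonup} \boldsymbol{\omega_*}$ in $W^{d,\boldsymbol\infty}$, where $\boldsymbol{\omega_*}$ is the limit with $\boldsymbol{d\omega_*} = \boldsymbol{\xi_0}$ (constant) on $Q(x_0,\rho)$ — this uses that the rescaled periodic perturbations converge weakly-$*$ to their mean, which is $0$ because $\boldsymbol\phi$ has zero boundary trace (so $\int_D d\phi_i = 0$ by Stokes); (3) apply the weak-$*$ lower semicontinuity hypothesis to get $\liminf_\varepsilon I(\boldsymbol{\omega_\varepsilon}) \geq I(\boldsymbol{\omega_*})$; (4) compute both sides: by change of variables on each subcube and a Riemann-sum/Lebesgue-point argument, $\int_{Q(x_0,\rho)} f(x,\boldsymbol{\omega_\varepsilon},\boldsymbol{d\omega_\varepsilon}) \to |Q(x_0,\rho)| \cdot \rho^{-n}\int_{Q(x_0,\rho)}\!\!\big(\text{stuff}\big)$... more precisely, dividing by $|Q(x_0,\rho)|$ and then letting $\rho \to 0$, the left side tends to $\int_D f(x_0,\boldsymbol{\omega_0},\boldsymbol{\xi_0}+\boldsymbol{d\phi})$ and the right side to $f(x_0,\boldsymbol{\omega_0},\boldsymbol{\xi_0})$, using continuity of $f$ in the last two slots, the Carathéodory property, and the growth bound \eqref{caratheodory} to justify passing limits through the integral (dominated convergence, with the $L^1$ majorant $a(x)$ plus the locally bounded $b$-term since everything stays in a fixed bounded set of $\boldsymbol{\Lambda^{k-1}}\times\boldsymbol{\Lambda^k}$).

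The last sentence of the statement then follows by specializing: given $\boldsymbol\psi_i \in W^{1,\infty}_0(D;\Lambda^{k_i-1})$, apply the inequality with $\boldsymbol\phi = \boldsymbol\psi$ (noting $W^{1,\infty}_0 \subset W^{d,\infty}$ with vanishing full trace, hence vanishing tangential trace) to obtain exactly the defining inequality of vectorial ext. quasiconvexity of $\boldsymbol\xi \mapsto f(x_0,\boldsymbol{\omega_0},\boldsymbol\xi)$; the scaling to a general bounded open set $\Omega'$ in place of $D$ is the standard covering-of-$\Omega'$-by-small-cubes reduction (or one invokes that quasiconvexity tested on $D$ is equivalent to quasiconvexity tested on any bounded open set, which is part of the basic theory).

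I expect the main obstacle to be step (1)–(2): ensuring the perturbation forms glue across subcube boundaries into a genuine element of $W^{d,\boldsymbol\infty}(\Omega;\boldsymbol{\Lambda^{k-1}})$ — that is, that $d$ of the glued form has no singular (distributional) part on the interfaces — which is precisely where the tangential-trace condition $\nu \wedge \boldsymbol\phi = 0$ on $\partial D$ is used, and where one must be careful that $W^{d,\boldsymbol\infty}$ (not $W^{1,\boldsymbol\infty}$) is the space in which weak-$*$ convergence is claimed, since the $\omega_i$ themselves need not have bounded full gradient. A secondary technical point is the Lebesgue-point argument in step (4) for the $x$-dependence, which requires $f$ Carathéodory plus the integrable majorant, but this is routine and identical to the classical case.
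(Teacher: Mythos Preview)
Your proposal is correct and follows essentially the same approach as the paper: the paper itself omits the proof, stating only that it is ``a long but straightforward adaptation of the classical proof (due to Acerbi-Fusco \cite{AcerbiFuscosemicontinuity}) for the gradient case (cf.\ Theorem 3.15 in \cite{DCV2}),'' which is precisely the localization-and-rescaling argument you outline. Your identification of the main technical point---that the tangential-trace condition $\nu\wedge\boldsymbol{\phi}=0$ on $\partial D$ is what ensures the glued perturbation lies in $W^{d,\boldsymbol{\infty}}$ with no singular part of $d$ on the interfaces---is exactly the adaptation needed to pass from the classical $W^{1,\infty}$ setting to the $W^{d,\boldsymbol{\infty}}$ setting here.
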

\begin{remark}\label{remarkinftynecessaryforp}
 Since $I$ being weak $\ast$ lower semicontinuous in $W^{d,\boldsymbol{\infty}}\left(\Omega;\boldsymbol{\Lambda^{k-1}}\right)$ is a necessary condition for $I$ to be
 weak lower semicontinuous in $W^{d,\boldsymbol{p}}\left(\Omega;\boldsymbol{\Lambda^{k-1}}\right)$ for any 
 $\boldsymbol{p},$ $f$ being vectorially ext. quasiconvex is a necessary condition for 
 weak lower semicontinuity in $W^{d,\boldsymbol{p}}\left(\Omega;\boldsymbol{\Lambda^{k-1}}\right)$ as well. 
\end{remark}
The proof of this result is  a long but straightforward adaptation of the classical  proof (due to Acerbi-Fusco 
\mbox{\cite{AcerbiFuscosemicontinuity}}) for the gradient case (cf. Theorem 3.15 in \cite{DCV2})  and is omitted. See \cite{silthesis} for a detailed proof.

\subsection{Lower semicontinuity for quasiconvex functions without lower order terms}

We now turn to sufficient conditions for sequential weak lower semicontinuity. We begin by defining the appropriate growth conditions.
\begin{definition}[Growth condition I]
 Let $\Omega\subset\mathbb{R}^n$ be open, bounded and let $f:\boldsymbol{\Lambda^k}\rightarrow \mathbb{R}.$  
 Let $\boldsymbol{p}$ be given.\smallskip
  
  \noindent $f$ is said to be of growth $\left(\mathcal{C}_{\boldsymbol{p}}\right)$, if 
  for every $\boldsymbol{\xi} =  (\xi_1, \ldots, \xi_m ) \in \boldsymbol{\Lambda^{k}}$,  $f$ satisfies, 
  \begin{align*}
    - \alpha \left( 1 + \sum_{i=1}^{m}  G^{l}_{i}( \xi_{i} ) \right)  \leq f(\boldsymbol{\xi}) \leq \alpha \left( 1  +  \sum_{i=1}^{m} G^{u}_{i}(\xi_i) \right) ,\tag{ $\mathcal{C}_{\boldsymbol{p}} $}
  \end{align*}
where $\alpha > 0$ is a constant and the functions $G^{l}_{i}$s in the lower bound and the functions $G^{u}_{i}$s in the upper bound has the following form:
\begin{itemize}
 \item If $p_i =1$, then,
                   \begin{align*}
                    G^{l}_{i}(\xi_i) = G^{u}_{i}(\xi_i) = \alpha_{i} \lvert \xi_{i} \rvert \qquad \qquad \text{ for some constant } \alpha_i \geq 0. 
                    \end{align*}
\item If $1 < p_i < \infty$, then,
\begin{align*}
 G^{l}_{i}(\xi_i) = \alpha_{i}\lvert \xi_{i} \rvert^{q_i} \quad \text{ and } \quad 
 G^{u}_{i}(\xi_i) = \alpha_{i}\lvert \xi_{i} \rvert^{p_i} , 
\end{align*}
for some $ 1 \leq q_i < p_i $ and for some constant $ \alpha_i \geq 0.$ 
\item If $p_i = \infty$, then, 
\begin{align*}
 G^{l}_{i}(\xi_i) = G^{u}_{i}(\xi_i) = \eta_{i}\left( \lvert \xi_{i} \rvert \right)  . 
\end{align*}
for some nonnegative, continuous, increasing function  $\eta_i$.
\end{itemize}
\end{definition}
Now we need a lemma which is essentially an analogue of the result relating quasiconvexity with $W^{1,p}$-quasiconvexity in the classical case (see Ball-Murat \cite{ballmurat}) 
and is proved in a similar manner.   
\begin{lemma}[$W^{d,\boldsymbol{p}}$-quasiconvexity]\label{equivalence of quasiconvexity}
  Let $\Omega\subset\mathbb{R}^n$ be open, bounded, smooth. 
 Let $f:\boldsymbol{\Lambda^k}\rightarrow \mathbb{R}$ satisfy, for every $\boldsymbol{\xi} =  (\xi_1, \ldots, \xi_m ) \in \boldsymbol{\Lambda^{k}}$,
 \begin{align*}
    f(\boldsymbol{\xi}) \leq \alpha \left( 1  +  \sum_{i=1}^{m} G^{u}_{i}(\xi_i) \right) ,
  \end{align*}
  where $\alpha > 0$ is a constant and the functions $G^{u}_{i}$s are as defined above, with a given $\boldsymbol{p}.$ 
Then the following are equivalent.
\begin{enumerate}
 \item[(i)] $f$ is vectorially ext. quasiconvex.
 \item[(ii)] For every $\boldsymbol{q}$ such that $p_{i} \leq q_{i} \leq \infty$ for every $i=1, \ldots, m$, we have,
 
 \begin{align*}
              \frac{1}{\operatorname*{meas}(\Omega)} \int_{\Omega} f(\boldsymbol{\xi} + d \boldsymbol{\phi}) \geq f(\boldsymbol{\xi} ),
             \end{align*}
for every $\boldsymbol{\phi} \in W^{d,\boldsymbol{q}}_{T}\left(\Omega;\boldsymbol{\Lambda^{k-1}}\right).$
\end{enumerate}
\end{lemma}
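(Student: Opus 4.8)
The plan is to prove $\mathrm{(i)} \Rightarrow \mathrm{(ii)}$ by an approximation/mollification argument and $\mathrm{(ii)} \Rightarrow \mathrm{(i)}$ by localization, following the scheme of Ball--Murat \cite{ballmurat} for the classical $W^{1,p}$-quasiconvexity but working factor by factor. The implication $\mathrm{(ii)} \Rightarrow \mathrm{(i)}$ is the easy direction: taking $q_i = \infty$ for all $i$ in (ii) and noting that $W^{1,\boldsymbol{\infty}}_{0}(\Omega;\boldsymbol{\Lambda^{k-1}}) \subset W^{d,\boldsymbol{\infty}}_{T}(\Omega;\boldsymbol{\Lambda^{k-1}})$ (since $\nu \wedge \omega = 0$ on $\partial\Omega$ whenever $\omega$ vanishes on $\partial\Omega$), the inequality in (ii) restricted to compactly supported test forms is precisely the definition of vectorial ext. quasiconvexity. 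One should first observe, using translation and scaling invariance of the quasiconvexity inequality together with a Vitali covering of $\Omega$ by small cubes (exactly as in the single-form case, cf.\ the proof of Theorem~\ref{implications for vectorial ext convexity} and \cite{BandDacSil}), that the inequality for a general bounded open $\Omega$ is equivalent to the one for a fixed cube; this makes the geometry of $\Omega$ irrelevant here.

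For $\mathrm{(i)} \Rightarrow \mathrm{(ii)}$, fix $\boldsymbol{q}$ with $p_i \le q_i \le \infty$ and $\boldsymbol{\phi} \in W^{d,\boldsymbol{q}}_{T}(\Omega;\boldsymbol{\Lambda^{k-1}})$. The goal is to produce, for each $\varepsilon > 0$, a form $\boldsymbol{\psi}^{\varepsilon} \in W^{1,\boldsymbol{\infty}}_{0}(\Omega;\boldsymbol{\Lambda^{k-1}})$ (or at least compactly supported Lipschitz, to which the definition of vectorial ext.\ quasiconvexity applies) such that $d\boldsymbol{\psi}^{\varepsilon} \to d\boldsymbol{\phi}$ strongly enough that, via the growth bound $f(\boldsymbol{\xi}) \le \alpha(1 + \sum_i G^u_i(\xi_i))$ and the local Lipschitz continuity of $f$ from Theorem~\ref{implications for vectorial ext convexity}, one gets
\[
\int_{\Omega} f(\boldsymbol{\xi} + d\boldsymbol{\psi}^{\varepsilon}) \longrightarrow \int_{\Omega} f(\boldsymbol{\xi} + d\boldsymbol{\phi}).
\]
The construction of $\boldsymbol{\psi}^{\varepsilon}$ is done one factor at a time: for the $i$-th factor, the tangential boundary condition $\nu \wedge \phi_i = 0$ means that extending $\phi_i$ by $0$ outside $\Omega$ keeps $d$ of it in $L^{q_i}$ globally, and then one mollifies and multiplies by a cutoff, as in the gradient case. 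When $q_i < \infty$ one uses density of smooth compactly supported forms and strong $L^{q_i}$ convergence of $d\psi^{\varepsilon}_i$ together with $q_i \ge p_i \ge$ the exponent appearing in $G^u_i$, so that the upper growth bound gives a uniformly integrable majorant and one passes to the limit; when $q_i = \infty$ one arranges $\|d\psi^{\varepsilon}_i\|_{L^\infty}$ to stay bounded and uses that $f$ is Lipschitz on the relevant bounded set. A truncation argument (replacing $\phi_i$ by its truncation at level $1/\varepsilon$ before mollifying, on the set where it is large) handles the $q_i = \infty$ requirement of the test-form class while controlling the error through the growth condition.

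The main obstacle is the boundary adjustment: one must pass from a form $\phi_i$ that is merely in $W^{d,q_i}$ with vanishing tangential trace to a genuinely compactly supported form without an uncontrolled increase of $\int f(\boldsymbol{\xi} + d(\cdot))$. Unlike the $W^{1,p}$ setting, $\phi_i$ need not lie in $W^{1,q_i}$, so one cannot simply multiply by a cutoff and differentiate the product; instead the cutoff must be applied after a Hodge-type decomposition or after writing $d(\chi \,\phi_i) = d\chi \wedge \phi_i + \chi\, d\phi_i$ and absorbing the error term $d\chi \wedge \phi_i$, which is supported in a thin shell near $\partial\Omega$ and is controlled in $L^{q_i}$ because $\phi_i \in L^{q_i}$ there. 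The key quantitative point is that $\|d\chi \wedge \phi_i\|_{L^{q_i}(\Omega)} \to 0$ as the shell shrinks, using absolute continuity of the integral; combined with the growth bound and the Lipschitz estimate on $f$, this forces the error in the functional to vanish. Once the compactly supported Lipschitz approximant is in hand, vectorial ext.\ quasiconvexity of $f$ gives $\frac{1}{\operatorname*{meas}(\Omega)}\int_\Omega f(\boldsymbol{\xi} + d\boldsymbol{\psi}^\varepsilon) \ge f(\boldsymbol{\xi})$, and letting $\varepsilon \to 0$ concludes.
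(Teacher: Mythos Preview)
Your proposal is correct in spirit and shares the paper's overall scheme (approximate $\boldsymbol{\phi}\in W^{d,\boldsymbol{q}}_T$ by compactly supported smooth forms, apply the defining inequality, pass to the limit), but the passage to the limit is handled quite differently. You aim to establish full convergence
\[
\int_\Omega f(\boldsymbol{\xi}+d\boldsymbol{\psi}^\varepsilon)\longrightarrow \int_\Omega f(\boldsymbol{\xi}+d\boldsymbol{\phi})
\]
by combining the local Lipschitz estimate for $f$ with the growth bound and a careful boundary cut--off analysis of the error $d\chi\wedge\phi_i$. The paper instead observes that only the inequality $\limsup_\nu \int_\Omega f(\boldsymbol{\xi}+d\boldsymbol{\phi}^\nu)\le \int_\Omega f(\boldsymbol{\xi}+d\boldsymbol{\phi})$ is needed, and obtains it in one stroke by applying Fatou's lemma to the \emph{nonnegative} integrand $\alpha\bigl(1+\sum_i G^u_i(\xi_i+d\phi^\nu_i)\bigr)-f(\boldsymbol{\xi}+d\boldsymbol{\phi}^\nu)$: this requires only a.e.\ convergence $d\boldsymbol{\phi}^\nu\to d\boldsymbol{\phi}$ together with convergence of the integrals of the upper bounds (dominated convergence). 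In particular the Fatou trick needs no lower growth bound on $f$, no quantitative Lipschitz estimate, and no fine control of the cut--off error; it only needs the approximants to exist with $d\boldsymbol{\phi}^\nu\to d\boldsymbol{\phi}$ a.e.\ and uniformly bounded in $L^{\boldsymbol{p}}$.

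Your route still works, because under hypothesis (i) separate convexity together with the assumed upper growth forces a matching lower growth bound, so the Vitali-type argument you sketch does go through; but this extra step, and the detailed boundary adjustment, are precisely what the paper's Fatou argument bypasses. Your treatment of $\mathrm{(ii)}\Rightarrow\mathrm{(i)}$, including the remark on domain independence via a Vitali covering, is correct and slightly more explicit than the paper, which simply takes this direction for granted.
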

\begin{proof}
 For any $\boldsymbol{\phi} \in W^{d,\boldsymbol{q}}_{T}\left(\Omega;\boldsymbol{\Lambda^{k-1}}\right) $, we find 
 $\lbrace\boldsymbol{\phi^{\nu}} \rbrace \subset C_{c}^{\boldsymbol{\infty}}\left(\Omega;\boldsymbol{\Lambda^{k-1}}\right)$ such that 
 $\lbrace \boldsymbol{\phi^{\nu}} \rbrace$ is uniformly bounded in $W^{d,\boldsymbol{p}}\left(\Omega;\boldsymbol{\Lambda^{k-1}}\right)$ and $ \boldsymbol{d\phi^{\nu}}
 \rightarrow \boldsymbol{d\phi}$ for a.e $x \in \Omega.$ Since $f$ is continuous, applying Fatou's lemma we obtain,
 \begin{multline*}
  \liminf_{\nu \rightarrow \infty}  \int\limits_{\Omega} \left[ \alpha \left( 1  +  \sum_{i=1}^{m} G^{u}_{i}(\phi^{\nu}_i) \right) - f(\boldsymbol{\xi} + d \boldsymbol{\phi^{\nu}}) \right] 
  \\\geq \int_{\Omega} \left[ \alpha \left( 1  +  \sum_{i=1}^{m} G^{u}_{i}(\phi_i) \right) - f(\boldsymbol{\xi} + d \boldsymbol{\phi}) \right]. 
 \end{multline*}
  Since $\displaystyle \lim_{\nu \rightarrow \infty}   \int\limits_{\Omega} \left( 1  +  \sum_{i=1}^{m} G^{u}_{i}(\phi^{\nu}_i) \right) =  
\int_{\Omega}  \left( 1  +  \sum_{i=1}^{m} G^{u}_{i}(\phi_i) \right),$ by dominated convergence theorem, vectorial ext. quasiconvexity of $f$ yields the result.
\end{proof}\smallskip

We now generalize an elementary proposition from convex analysis in this setting. The proof is straightforward and is just a matter of  
iterating the argument in the proof of Proposition 2.32 in \cite{DCV2}. So we provide only a brief sketch.  
\begin{proposition}\label{semicontinuity p lipscitz inequality without infty}
 Let $\boldsymbol{p} = (p_1,\ldots, p_m)$ with $1\leq p_i < \infty$ for all $1 \leq i \leq m $ and  let $\Omega\subset\mathbb{R}^n$ be open, bounded, smooth. 
 Let $f:\boldsymbol{\Lambda^k}\rightarrow \mathbb{R}$ be separately convex and satisfy, for every $\boldsymbol{\xi} =  (\xi_1, \ldots, \xi_m ) \in \boldsymbol{\Lambda^{k}}$,
 \begin{align*}
    \lvert f(\boldsymbol{\xi}) \rvert \leq \alpha \left( 1  +  \sum_{i=1}^{m} \lvert \xi_{i} \rvert^{p_i} \right) ,
  \end{align*}
  where $\alpha > 0$ is a constant. Then there exist constants $\beta_{i} > 0, i=1,\ldots, m$ such that
  \begin{align*}
   \lvert f(\boldsymbol{\xi}) -  f(\boldsymbol{\zeta}) \rvert \leq \sum_{i=1}^{m} \beta_{i} \left(  1 + \sum_{j=1}^{m} \left( \lvert \xi_{j} \rvert^{\frac{p_{j}}{p'_{i}} }  + 
   \lvert \zeta_{j} \rvert^{\frac{p_{j}}{p'_{i}}} \right)  \right)  \lvert  \xi_{i} - \zeta_{i}\rvert ,
  \end{align*}

  for every $\boldsymbol{\xi} =  (\xi_1, \ldots, \xi_m ), \boldsymbol{\zeta} =  (\zeta_1, \ldots, \zeta_m ) \in \boldsymbol{\Lambda^{k}},$
where $p'_{i}$ is the H\"{o}lder conjugate of exponent of $p_{i}.$
\end{proposition}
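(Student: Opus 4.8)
The plan is to reduce the multi-factor estimate to the classical one-factor inequality for convex functions with $p$-growth (Proposition 2.32 in \cite{DCV2}) by a telescoping argument that changes one factor at a time. Write $\boldsymbol{\eta}^{(0)} = \boldsymbol{\zeta}$ and, for $1 \le i \le m$, set $\boldsymbol{\eta}^{(i)} = (\xi_1,\dots,\xi_i,\zeta_{i+1},\dots,\zeta_m)$, so that $\boldsymbol{\eta}^{(m)} = \boldsymbol{\xi}$ and
\[
 f(\boldsymbol{\xi}) - f(\boldsymbol{\zeta}) = \sum_{i=1}^m \left[ f(\boldsymbol{\eta}^{(i)}) - f(\boldsymbol{\eta}^{(i-1)}) \right].
\]
The two arguments in the $i$-th bracket differ only in the $i$-th slot (where they equal $\xi_i$ and $\zeta_i$ respectively), the slots $j<i$ being filled with $\xi_j$ and the slots $j>i$ with $\zeta_j$. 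Hence, setting $g_i(\xi) := f(\xi_1,\dots,\xi_{i-1},\xi,\zeta_{i+1},\dots,\zeta_m)$, separate convexity of $f$ makes $g_i : \Lambda^{k_i} \to \mathbb{R}$ convex, and the growth hypothesis gives $|g_i(\xi)| \le \alpha\big(1 + C_i + |\xi|^{p_i}\big)$ for all $\xi \in \Lambda^{k_i}$, where $C_i := \sum_{j<i}|\xi_j|^{p_j} + \sum_{j>i}|\zeta_j|^{p_j}$ is a constant depending only on the frozen variables.

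Next I would record the one-factor estimate in a form adapted to an \emph{inhomogeneous} bound: if $h : \mathbb{R}^N \to \mathbb{R}$ is convex with $|h(\eta)| \le A + B|\eta|^{p}$ on all of $\mathbb{R}^N$ ($A,B \ge 0$, $p \ge 1$), then every element $s$ of $\partial h(\eta)$ satisfies $|s| \le c(N,p)\big( A^{1/p'} B^{1/p} + B|\eta|^{p-1}\big)$. This is exactly the computation behind Proposition 2.32 in \cite{DCV2}: for a unit vector $v$, convexity gives $\langle s, v\rangle \le \lambda^{-1}\big( h(\eta + \lambda v) - h(\eta)\big) \le \lambda^{-1}\big( 2A + B(|\eta|+\lambda)^{p} + B|\eta|^{p}\big)$ for every $\lambda>0$, and optimizing the choice of $\lambda$ (taking $\lambda \asymp |\eta|$ when $B|\eta|^p \gtrsim A$ and $\lambda \asymp (A/B)^{1/p}$ otherwise) yields the claimed bound. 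Applying this to $h = g_i$ with $A = \alpha(1+C_i)$, $B = \alpha$, $p = p_i$, and using the elementary subadditivity $(a+b)^{1/p_i'} \le a^{1/p_i'} + b^{1/p_i'}$ (valid since $1/p_i' \le 1$) twice, one gets
\[
 |\partial g_i(\eta)| \le c\,\alpha\Big( 1 + |\eta|^{p_i - 1} + \sum_{j < i} |\xi_j|^{p_j/p_i'} + \sum_{j > i} |\zeta_j|^{p_j/p_i'} \Big).
\]

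Finally I would assemble the bound. Since $t \mapsto g_i(\zeta_i + t(\xi_i - \zeta_i))$ is convex on $[0,1]$, hence absolutely continuous, $|g_i(\xi_i) - g_i(\zeta_i)| \le \big( \sup_{t \in [0,1]} |\partial g_i(\zeta_i + t(\xi_i-\zeta_i))| \big)\,|\xi_i - \zeta_i|$; bounding $|\zeta_i + t(\xi_i-\zeta_i)|^{p_i - 1} \le c\big(|\xi_i|^{p_i-1} + |\zeta_i|^{p_i - 1}\big)$ and using the identity $p_i - 1 = p_i / p_i'$ to fold the $i$-th slot into the same family of exponents, this bracket is $\le \beta_i\big( 1 + \sum_{j=1}^m (|\xi_j|^{p_j/p_i'} + |\zeta_j|^{p_j/p_i'})\big)|\xi_i - \zeta_i|$ for a suitable $\beta_i = \beta_i(\alpha, p_i, n, k_i, m)$. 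Summing over $i$ gives the proposition. I expect the one genuinely delicate point to be the appearance of the H\"older-conjugate exponents $p_j/p_i'$: a crude application of the one-factor Lipschitz estimate (taking the ``constant'' of $g_i$ to be $\alpha(1+C_i)$ and using the homogeneous version of Proposition 2.32) only produces the exponents $p_j$, which is too strong and would fail to pair with $\xi_i-\zeta_i \in L^{p_i}$ to give an $L^1$ weight in the intended applications; extracting the correct exponents is precisely what forces the scaling-sharp subgradient bound $A^{1/p'}B^{1/p}$, so the conjugate exponents are the natural output of the iteration rather than an artifact. (The degenerate case $p_i = 1$, where $p_i' = \infty$ and $p_j/p_i' = 0$, remains consistent: it merely records that $f$ is globally Lipschitz in its $i$-th factor, as any convex function of linear growth must be.)
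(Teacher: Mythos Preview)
Your telescoping strategy and your identification of the key point---that one must extract the scaling-sharp dependence $A^{1/p_i'}B^{1/p_i}$ on the frozen constant rather than the crude linear dependence---match the paper's approach exactly. The paper likewise writes $f(\boldsymbol{\xi})-f(\boldsymbol{\zeta})$ as a telescoping sum and controls each summand by the one-dimensional convexity inequality $(g(t\pm\mu)-g(t))/\mu \le (g(t\pm\lambda)-g(t))/\lambda$ with the explicit choice $\lambda = 1 + |\xi_i| + |\zeta_i| + \sum_{j\neq i}|\xi_j|^{p_j/p_i}$, which is precisely your optimized $\lambda$.

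There is, however, one genuine slip. You assert that ``separate convexity of $f$ makes $g_i:\Lambda^{k_i}\to\mathbb{R}$ convex''. This is false: in this paper \emph{separately convex} means convex in each \emph{scalar component}, not in each \emph{factor} (the paper stresses this distinction explicitly), so $g_i$ is only separately convex on $\Lambda^{k_i}$. Consequently the subdifferential $\partial g_i(\eta)$ you invoke need not exist, and the bound ``$|g_i(\xi_i)-g_i(\zeta_i)|\le \sup_{t}|\partial g_i(\cdot)|\,|\xi_i-\zeta_i|$'' is not justified as written. The fix is easy and is exactly what the paper (and Proposition~2.32 in \cite{DCV2}) does: telescope one step further, changing a single scalar coordinate $\xi_i^I$ at a time, and apply your one-dimensional difference-quotient bound with the optimized $\lambda$ along each coordinate direction. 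Since $|\eta + \lambda e_I| \le |\eta| + \lambda$, the same optimization yields the coordinate-wise bound $|\partial_I g_i(\eta)| \le c\big(A^{1/p_i'}B^{1/p_i} + B|\eta|^{p_i-1}\big)$, and summing over $I$ recovers your estimate. With this correction your argument is complete and coincides with the paper's.
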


\begin{proof}
 We know that for any convex function $g:\mathbb{R} \rightarrow \mathbb{R}$, we have, for every $\lambda > \mu > 0$ and for every $t \in \mathbb{R}$,
 $$ \frac{g(t \pm \mu) - g(t)}{\mu} \leq \frac{g(t \pm \lambda) - g(t)}{\lambda}. $$
Now let
 $$ g_{i}^{I}(t) := f(t, \widetilde{\boldsymbol{\xi}}^{i,I}),$$ where $\widetilde{\boldsymbol{\xi}}^{i,I}$ 
 is the vector whose components are precisely all the components of $\boldsymbol{\xi}$ except $\xi_{i}^{I}.$
 Choosing $\mu =  \zeta_{i}^{I} - \xi_{i}^{I}$ and $ \displaystyle \lambda = 1 + \lvert \xi_{i} \rvert + \lvert \zeta_{i} \rvert + 
 \sum_{j\neq i}  \lvert \xi_{j} \rvert^{\frac{p_{j}}{p_{i}}} , $ we obtain,
 $$  g(\zeta_{i}^{I}) - g(\xi_{i}^{I}) = g( \xi_{i}^{I} + \mu) - g(\xi_{i}^{I}) \leq \mu \frac{g(\xi_{i}^{I} + \lambda) - g(\xi_{i}^{I})}{\lambda}.$$
 The same can be done for $  g(\xi_{i}^{I}) -  g(\zeta_{i}^{I})$ as well. Now, using the growth conditions and writing 
 $f(\boldsymbol{\xi})-f(\boldsymbol{\zeta})$ as a sum of differences, the estimate follows.  
\end{proof}
\begin{remark}
 A similar looking inequality was claimed in Iwaniec-Lutoborski (\cite{iwaniec-lutoborski-null-lagrangian}, (10.3)), which however is easily seen to be false.  Take for example,
 the function $W:\Lambda^{k}\times \Lambda^{n-k} \rightarrow \Lambda^{n},$ defined by $W(\xi, \eta) = \xi \wedge \eta.$  It is easy to see that 
 $\left\lvert W(\xi, \eta)\right\rvert \leq \widetilde{C} \left( \left\lvert \xi\right\rvert^{2} + \left\lvert \eta\right\rvert^{2}\right),$ for some constant 
 $\widetilde{C}>0.$ Now, choose $\xi_{1}, \xi_{2} \in \Lambda^{k}$ 
 and $\eta \in  \Lambda^{n-k}$ such that $\left(\xi_{1} - \xi_{2} \right)\wedge \eta \neq 0.$ Now, for any $\lambda \in \mathbb{R},$ applying the inequality for the points 
 $\left( \xi_{1}, \lambda \eta \right)$ and $\left( \xi_{2}, \lambda \eta \right)$ gives 
 $$ \left\lvert \lambda \right\rvert \left\lvert \left(\xi_{1} - \xi_{2} \right)\wedge \eta\right\rvert \leq C 
 \left( \left\lvert \xi_{1}\right\rvert + \left\lvert \xi_{2}\right\rvert\right)^{(2-1)} \left\lvert \xi_{1}- \xi_{2}\right\rvert .$$
 Letting $\lvert \lambda \rvert \rightarrow \infty,$ it is clear that no such constant $C > 0$ can exist.\end{remark}
This proposition can be easily generalized to cover the case where some of the $p_{i}$s can be $\infty$ as well. 
\begin{proposition}\label{semicontinuity p lipscitz inequality}
 Let $ 0 \leq r \leq m$ be an integer. Let $\boldsymbol{p} = (p_1,\ldots, p_m)$
 where $1\leq p_i < \infty$ for all $1 \leq i \leq r $ and $p_{r+1} = \ldots = p_{m} = \infty.$ Let $\Omega\subset\mathbb{R}^n$ be open, bounded, smooth. 
 Let $f:\boldsymbol{\Lambda^k}\rightarrow \mathbb{R}$ be separately convex and satisfy, for every $\boldsymbol{\xi} =  (\xi_1, \ldots, \xi_m ) \in \boldsymbol{\Lambda^{k}}$,
 \begin{align*}
    \lvert f(\boldsymbol{\xi}) \rvert \leq \alpha \left( 1  +  \sum_{i=1}^{r} \lvert \xi_{i} \rvert^{p_i} + \sum_{i=r+1}^{m} \eta_{i}\left( \lvert \xi_{i} \rvert \right) \right) ,
    \end{align*}
    where $\alpha > 0$ is a constant and $\eta_i$s are some nonnegative, continuous, increasing functions.  Let $$ Q :=[-C, C]^{\sum\limits_{i = r+1}^{m} \binom{n}{k_{i}} } 
    \subset \subset \prod_{i= r+1}^{m}\Lambda^{k_{i}}$$ be a cube and define
    $$ K := \Lambda^{k_{1}}\times\ldots\times\Lambda^{k_{r}}\times Q . $$
Then there exist constants $\beta_{i}= \beta_{i}(K) > 0, i=1,\ldots, m$ such that
  \begin{align}\label{p lipscitz inequality}
   \lvert f(\boldsymbol{\xi}) -  f(\boldsymbol{\zeta}) \rvert \leq \sum_{i=1}^{r} \beta_{i} & \left(  1 + \sum_{j=1}^{r} \left( \lvert \xi_{j} \rvert^{\frac{p_{j}}{p'_{i}} }  + 
   \lvert \zeta_{j} \rvert^{\frac{p_{j}}{p'_{i}}} \right)  \right)  \lvert  \xi_{i} - \zeta_{i}\rvert \notag 
    \\ &+ \sum_{i=r+1}^{m} \beta_{i} \left(  1 + \sum_{j=1}^{r} \left( \lvert \xi_{j} \rvert^{p_{j}}    + 
   \lvert \zeta_{j} \rvert^{p_{j}} \right)  \right)  \lvert  \xi_{i} - \zeta_{i}\rvert ,
  \end{align}

  for every $\boldsymbol{\xi} =  (\xi_1, \ldots, \xi_m ), \boldsymbol{\zeta} =  (\zeta_1, \ldots, \zeta_m ) \in K,$
where $p'_{i}$ is the H\"{o}lder conjugate of exponent of $p_{i}.$
\end{proposition}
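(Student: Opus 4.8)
The plan is to deduce the estimate from Proposition~\ref{semicontinuity p lipscitz inequality without infty} by confining the last $m-r$ factors to the cube $Q$ — on which the contributions $\eta_i(\lvert \xi_i \rvert)$ collapse to a harmless additive constant — and then to handle the last $m-r$ directions by a separate one-dimensional convexity argument that exploits the compactness of $Q$.

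First I would enlarge the cube slightly: fix $C' \ge 3C+1$, set $Q' = [-C',C']^{\sum_{i=r+1}^m \binom{n}{k_i}}$ and $K' = \Lambda^{k_1} \times \cdots \times \Lambda^{k_r} \times Q' \supset K$. The growth hypothesis then gives, for $\boldsymbol{\xi} \in K'$, a bound $\lvert f(\boldsymbol{\xi}) \rvert \le \alpha'(1 + \sum_{i=1}^r \lvert \xi_i \rvert^{p_i})$ in which $\alpha' = \alpha'(K)$ absorbs $\alpha$ together with the finite maxima of the $\eta_i$, $i > r$, over the bounded set $Q'$. Thus on $K'$ the function $f$ obeys a growth condition of precisely the form treated in Proposition~\ref{semicontinuity p lipscitz inequality without infty}, but in the first $r$ variables only.

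Next I would fix $\boldsymbol{\xi}, \boldsymbol{\zeta} \in K$ and expand $f(\boldsymbol{\xi}) - f(\boldsymbol{\zeta})$ as a telescoping sum in which the scalar components are switched from $\xi_i^I$ to $\zeta_i^I$ one at a time; since $Q$ is convex and the first $r$ factors are full vector spaces, every intermediate configuration stays in $K$. For a step changing a component $\xi_i^I$ with $i \le r$, the slice $g_i^I(t) = f(\ldots, t, \ldots)$ is convex (separate convexity of $f$), and applying the slope-monotonicity inequality recalled in the proof of Proposition~\ref{semicontinuity p lipscitz inequality without infty} with $\mu = \zeta_i^I - \xi_i^I$ and $\lambda = 1 + \lvert \xi_i \rvert + \lvert \zeta_i \rvert + \sum_{j \le r,\, j \ne i} \lvert \xi_j \rvert^{p_j / p_i}$ reproduces that proof verbatim and yields the first sum in~\eqref{p lipscitz inequality}, the now-bounded factors $j > r$ only enlarging the constant $\beta_i$. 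For a step changing a component $\xi_i^I$ with $i > r$, the slice $g_i^I$ is again convex and I would now use the same inequality with the \emph{fixed} shift $\lambda = 2C + 1$: because $\lvert \mu \rvert = \lvert \zeta_i^I - \xi_i^I \rvert \le 2C < \lambda$ and the shifted points $\xi_i^I \pm \lambda$, $\zeta_i^I \pm \lambda$ lie in $[-C', C']$, it bounds $\lvert g_i^I(\zeta_i^I) - g_i^I(\xi_i^I) \rvert$ by $(\lvert \mu \rvert / \lambda)$ times a sum of values of $f$ at points of $K'$, each of which is $\le \alpha'(1 + \sum_{j \le r}(\lvert \xi_j \rvert^{p_j} + \lvert \zeta_j \rvert^{p_j}))$. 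Since $p_i' = 1$ for $i > r$, this is exactly the $i$-th term of the second sum in~\eqref{p lipscitz inequality}, once one sums over the finitely many components $I$ and uses equivalence of norms on $\Lambda^{k_i}$ to pass from $\sum_I \lvert \xi_i^I - \zeta_i^I \rvert$ to a multiple of $\lvert \xi_i - \zeta_i \rvert$.

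The hard part, such as it is, will be only bookkeeping: one must order the telescoping replacements so that every partially switched configuration remains in $K$ (automatic from convexity of the factors) and so that, when a component of an $\infty$-factor is shifted by $\pm \lambda$, the shifted point stays in $Q'$ — which is exactly why $C' \ge 3C + 1$ is imposed. Everything else is the routine iteration already performed in the proof of Proposition~\ref{semicontinuity p lipscitz inequality without infty}, so I would only sketch it.
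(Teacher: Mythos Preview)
Your proposal is correct and follows essentially the same strategy as the paper: restrict the last $m-r$ factors to the cube so that the $\eta_i$ contributions become a constant, use the argument of Proposition~\ref{semicontinuity p lipscitz inequality without infty} for the first $r$ factors, and use the local Lipschitz property of one-variable convex functions on compact intervals for the last $m-r$ factors. The paper organises this slightly more economically by splitting $f(\boldsymbol{\xi})-f(\boldsymbol{\zeta})$ into the two blocks $f(\boldsymbol{\xi})-f(\zeta_1,\ldots,\zeta_r,\xi_{r+1},\ldots,\xi_m)$ and $f(\zeta_1,\ldots,\zeta_r,\xi_{r+1},\ldots,\xi_m)-f(\boldsymbol{\zeta})$, invoking Proposition~\ref{semicontinuity p lipscitz inequality without infty} as a black box on the first block and the elementary bound $\lvert g(x)-g(y)\rvert \le 2\bigl(\max_{\lvert t\rvert\le C+1}\lvert g(t)\rvert\bigr)\lvert x-y\rvert$ on the second, whereas you re-run the componentwise telescope throughout; but the content is the same.
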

\begin{remark}
Clearly, when $r=m$, the last term  and when $r=0$, the first term  is not present in the inequality \eqref{p lipscitz inequality}.
Also the assumption on the naming of the variable is clearly not a restriction at all, since we can always relabel the variables.
\end{remark}
\begin{proof}
 We split $ f(\boldsymbol{\xi}) -  f(\boldsymbol{\zeta})$ as a sum of $ f(\boldsymbol{\xi}) -  f(\zeta_{1},\ldots,\zeta_{r}, \xi_{r+1},\ldots, \xi_{m})$ and 
 $  f(\zeta_{1},\ldots,\zeta_{r}, \xi_{r+1},\ldots, \xi_{m}) -  f(\boldsymbol{\zeta}).$ Now the first term is 
 estimated using proposition \mbox{\ref{semicontinuity p lipscitz inequality without infty}}, using the fact that $\eta_{i}$s are
  bounded on $\left[-C,C \right]$ for $r+1\leq i \leq m.$ For the second term, we note that for any convex function $g:\mathbb{R} \rightarrow \mathbb{R}$,  for any 
  $x, y \in [-C,C]$, we have the estimate 
  $\lvert g(x) - g(y)\rvert \leq 2 \left(\max\limits_{ \lvert t \rvert \leq C+1} \left\lvert g(t) \right\rvert \right)\lvert x -y \rvert.$ Using separate convexity along with 
  this estimate, we obtain the result.  
\end{proof}\smallskip 

\noindent Now we need a decomposition lemma, which lets us replace a uniformly bounded sequence of exterior derivatives in $L^{\boldsymbol{p}}$ by a sequence with equiintegrable one, upto 
sets of small measure.
\begin{lemma}\label{decomposition lemma}
 Let $\boldsymbol{p} = (p_1,\ldots, p_m)$ where $1 < p_i < \infty$ for all $1 \leq i \leq m. $ Let $\Omega\subset\mathbb{R}^n$ be open, bounded, smooth and 
 $$ \boldsymbol{\omega^{r}} \rightharpoonup \boldsymbol{\omega} \quad \text{ in } W^{d,\boldsymbol{p}}\left(\Omega;\boldsymbol{\Lambda^{k-1}}\right), $$ Then there exist a 
 subsequence $\lbrace \boldsymbol{\omega^{s}}\rbrace$ and a sequence $\lbrace \boldsymbol{v^{s}}\rbrace \subset L^{\boldsymbol{p}}\left(\Omega;\boldsymbol{\Lambda^{k}}\right)$ 
 such that $\lbrace \lvert v^{s}_{i} \rvert^{p_{i}}\rbrace$ is equiintegrable and 
\begin{align*}
 v^{s}_{i} \rightharpoonup d\omega_{i} \text{ in } L^{p_{i}}(\Omega, \Lambda^{k_{i}}) 
 \end{align*}
 for all $1 \leq i \leq m$  and 
 \begin{align*}
 \lim_{s \rightarrow \infty} \operatorname*{meas} \Omega_{s} = 0,
\end{align*}
where 
$$ \Omega_{s} := \lbrace x \in \Omega:   v^{s}_{i}(x) \neq d\omega^{s}_{i}(x) \text{ for some } i \in \lbrace 1,\ldots, m \rbrace \rbrace .$$ 
\end{lemma}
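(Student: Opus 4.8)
The plan is to reduce the statement to the classical decomposition (Lipschitz truncation) lemma for sequences converging weakly in $W^{1,p}$, $1<p<\infty$ --- originating with Acerbi--Fusco and, in the sharp form involving equiintegrability, with Fonseca--M\"uller--Pedregal and Kristensen (see e.g.\ \cite{AcerbiFuscosemicontinuity}; cf.\ also \cite{DCV2}) --- after first replacing each $\omega_i^r$ by a potential of $d\omega_i^r$ that is bounded in $W^{1,p_i}$. This ``gauge fixing'' is exactly the device that lets one pass from $W^{d,\boldsymbol{p}}$, where gradient truncation is unavailable, to $W^{1,\boldsymbol{p}}$, where it applies.

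First I would fix the gauge. For each $1\le i\le m$, since $\Omega$ is bounded and smooth and $d\omega_i^r$ is closed and exact, the $L^{p_i}$ Hodge--Morrey decomposition together with the associated Gaffney-type estimate --- that is, the gauge fixing used for $m=1$ in \cite{BandDacSil} --- furnishes, via a bounded linear operator $T_i$ applied to $d\omega_i^r$, a form $\psi_i^r=T_i(d\omega_i^r)\in W^{1,p_i}(\Omega;\Lambda^{k_i-1})$ with $d\psi_i^r=d\omega_i^r$ and $\lVert\psi_i^r\rVert_{W^{1,p_i}}\le C\lVert d\omega_i^r\rVert_{L^{p_i}}$. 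Hence $\{\psi_i^r\}$ is bounded in $W^{1,p_i}$ and, along a relabelled subsequence, $\psi_i^r\rightharpoonup\psi_i$ in $W^{1,p_i}$ with $d\psi_i=d\omega_i$ (the exterior derivative being continuous for weak convergence) for every $i$. Next I would apply the classical decomposition lemma to each of the finitely many bounded sequences $\{\psi_i^r\}_r$ --- treating coefficient functions as vector-valued Sobolev maps, which is legitimate since $1<p_i<\infty$ --- and extract a single subsequence, relabelled $\{\cdot^s\}$, valid for all $i$ by a diagonal argument over $i=1,\dots,m$. This yields $z_i^s\in W^{1,p_i}(\Omega;\Lambda^{k_i-1})$, obtained by truncating $\psi_i^s$, with $\{\lvert\nabla z_i^s\rvert^{p_i}\}_s$ equiintegrable, $z_i^s\rightharpoonup\psi_i$ in $W^{1,p_i}$, and $\operatorname*{meas}\{z_i^s\ne\psi_i^s\}\to0$. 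Finally I would set $v_i^s:=dz_i^s$ and $\boldsymbol{v^s}:=(v_1^s,\dots,v_m^s)\in L^{\boldsymbol{p}}(\Omega;\boldsymbol{\Lambda^k})$: the pointwise bound $\lvert d\eta\rvert\le c_n\lvert\nabla\eta\rvert$ turns equiintegrability of $\{\lvert\nabla z_i^s\rvert^{p_i}\}$ into that of $\{\lvert v_i^s\rvert^{p_i}\}$; $z_i^s\rightharpoonup\psi_i$ in $W^{1,p_i}$ gives $v_i^s=dz_i^s\rightharpoonup d\psi_i=d\omega_i$ in $L^{p_i}$; and since two Sobolev forms coinciding a.e.\ on a measurable set have equal weak derivatives a.e.\ there, off $\{z_i^s\ne\psi_i^s\}$ one has $v_i^s=d\psi_i^s=d\omega_i^s$ a.e. Consequently, up to a null set,
$$\Omega_s\subset\bigcup_{i=1}^m\{\,x\in\Omega:z_i^s(x)\ne\psi_i^s(x)\,\},$$
and therefore $\operatorname*{meas}\Omega_s\le\sum_{i=1}^m\operatorname*{meas}\{z_i^s\ne\psi_i^s\}\to0$, as required.

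The hard part is the gauge fixing: it rests on the $L^p$-continuity of the Hodge projections --- equivalently, on $W^{1,p}$ elliptic estimates for the Hodge Laplacian on a smooth bounded domain --- and this is exactly where the smoothness of $\partial\Omega$ and the restriction $1<p_i<\infty$ enter, since a sequence bounded only in $W^{d,p_i}$ need carry no gradient bound and hence need admit no truncation; acquiring the $W^{1,p_i}$ bound on $\psi_i^r$ is the whole point. The second point to handle carefully is the measure-theoretic fact that $z_i^s=\psi_i^s$ a.e.\ on a set forces $dz_i^s=d\psi_i^s$ a.e.\ on that set, which is what makes $v_i^s$ agree with the original $d\omega_i^s$ away from $\Omega_s$. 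The remaining ingredients --- the componentwise application of the classical lemma, the diagonal extraction over the $m$ indices, and the elementary estimate $\lvert d\eta\rvert\le c_n\lvert\nabla\eta\rvert$ --- are routine.
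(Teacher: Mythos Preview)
Your proposal is correct and follows essentially the same route as the paper's own proof: solve a first-order Hodge system to replace each $\omega_i^r$ by a potential $\beta_i^r$ (your $\psi_i^r$) bounded in $W^{1,p_i}$, pass to a weakly convergent subsequence in $W^{1,\boldsymbol{p}}$, apply the classical Fonseca--M\"uller decomposition lemma to obtain $u_i^s$ (your $z_i^s$) with equiintegrable gradients agreeing with $\beta_i^s$ off a small set, and set $v_i^s=du_i^s$. The paper sidesteps your measure-theoretic remark about weak derivatives by simply including the gradient-discrepancy set $\{\nabla u_i^s\neq\nabla\beta_i^s\}$ in the definition of the exceptional set, which the classical lemma already controls; otherwise the arguments coincide.
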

\begin{proof}
Since $1 < p_{i} < \infty$ for all $1 \leq i \leq m,$ for every $r$, we find $\boldsymbol{\beta}^{r} \in W^{1, \boldsymbol{p}}(\Omega; \boldsymbol{\Lambda^{k}}),$
such that,
\begin{equation*}
   \left\lbrace \begin{aligned}
                \boldsymbol{d\beta}^{r} = \boldsymbol{d\omega}^{r}  \quad &\text{and} \quad  \boldsymbol{\delta \beta}^{r} = 0 &&\text{ in } \Omega, \\
                \nu\lrcorner \boldsymbol{\beta}^{r} &=  0 &&\text{  on } \partial\Omega,
                \end{aligned} 
                \right. 
                \end{equation*}
and there exists $c_{1} > 0$ such that
$$ \lVert \boldsymbol{\beta}^{r} \rVert_{W^{1,\boldsymbol{p}}} \leq c_{1} \lVert \boldsymbol{d\omega}^{r} \rVert_{L^{\boldsymbol{p}}} .$$
Therefore, up to the extraction of a subsequence which we do not relabel, there exists 
$\boldsymbol{\beta} \in  W^{1,\boldsymbol{p}}\left(\Omega;\boldsymbol{\Lambda^{k -1}}\right)$ such that 
$$ \boldsymbol{\beta}^{r} \rightharpoonup \boldsymbol{\beta}\qquad \text{ in } W^{1,\boldsymbol{p}}\left(\Omega;\boldsymbol{\Lambda^{k -1}}\right).$$
Using a well-known decomposition lemma in calculus of variations (cf. Lemma 2.15 in \cite{MullerFonseca}) to find a subsequence $\lbrace \boldsymbol{\beta}^{s} \rbrace$ and  a sequence 
$\lbrace \boldsymbol{u}^{s} \rbrace \subset W^{1,\boldsymbol{p}}\left(\Omega;\boldsymbol{\Lambda^{k -1}}\right)$ 
such that $\lbrace \lvert \nabla u^{s}_{i} \rvert^{p_{i}}\rbrace$ is equiintegrable for all $1 \leq i \leq m$ and 
\begin{align*}
 \boldsymbol{u}^{s} \rightharpoonup \boldsymbol{\beta} \text{ in } W^{1, \boldsymbol{p}}(\Omega, \boldsymbol{\Lambda^{k-1}}) 
\end{align*}
and $\displaystyle \lim_{\nu \rightarrow \infty} \operatorname*{meas} \Omega^{'}_{s} = 0$
where $\Omega^{'}_{s} = \bigcup\limits_{i=1}^{m} \Omega_{s}^{i} $ with 
$ \Omega_{s}^{i} := \lbrace x \in \Omega:   u^{s}_{i}(x) \neq \beta^{s}_{i}(x) \rbrace \cup 
\lbrace x \in \Omega:   \nabla u^{s}_{i}(x) \neq \nabla\beta^{s}_{i}(x) \rbrace,$ for all $1 \leq i \leq r.$ Setting $\boldsymbol{v}^{s} = \boldsymbol{du}^{s}$ proves the lemma. 
\end{proof}
\begin{remark} (i) In contrast to the classical case, when $k_{i} > 1$ for some $i,$ this lemma does not allow us to replace the sequence 
$\lbrace \boldsymbol{\omega}^{s}\rbrace$ up to a set of small measure.\smallskip

\noindent (ii) The hypothesis of the lemma can be weakened a bit. The conclusion of the lemma still holds if we only require $\boldsymbol{d\omega}^{r} \rightharpoonup 
\boldsymbol{d\omega}$ in $L^{\boldsymbol{p}}(\Omega; \boldsymbol{\Lambda^{k}})$ with the same proof.
\end{remark}

\noindent With lemma \ref{equivalence of quasiconvexity} at hand,  using 
De Giorgi's slicing technique \mbox{\cite{DeGiorgisemicontinuity}} (see also 
    \mbox{\cite{AcerbiFuscosemicontinuity},\cite{Marcelliniapproximationofquasiconvex},\cite{Morrey1966}}) 
as in the proof of its analogue in classical case (cf. Lemma 8.7 in \cite{DCV2}), we can deduce 
the following lemma.

\begin{lemma}\label{semicontinuity lemma}
 Let $\boldsymbol{p} = (p_1,\ldots, p_m)$ where $1\leq p_i \leq \infty$ for all $1 \leq i \leq m. $   Let $ D\subset\mathbb{R}^n$ be a cube parallel to the axes.
 Let $\boldsymbol{\xi} =  (\xi_1, \ldots, \xi_m ) \in \boldsymbol{\Lambda^{k}}$. Let $f:\boldsymbol{\Lambda^k}\rightarrow \mathbb{R}$ be vectorially ext. 
 quasiconvex satisfying the growth condition $\left( \mathcal{C}_{\boldsymbol{p}}\right).$
Let $$ \boldsymbol{\phi^{\nu}} \rightharpoonup 0 \quad \text{ in } W^{d,\boldsymbol{p}}\left(D;\boldsymbol{\Lambda^{k-1}}\right) \quad 
  ( \stackrel{\ast}{\rightharpoonup} \text{ if } p_{i}= \infty ), $$
together with 
$$ \phi^{\nu}_{i} \rightarrow 0 \quad \text{ in } L^{1}\left( D; \Lambda^{k_{i} -1} \right) \text{ if } p_{i} =1.$$
Then
$$\liminf_{\nu \rightarrow \infty} \int_{D} f(\boldsymbol{\xi} + d \boldsymbol{\phi^{\nu}}) \geq  f(\boldsymbol{\xi} ) \operatorname*{meas}(D)  . $$
\end{lemma}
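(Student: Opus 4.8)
The plan is to follow the De Giorgi slicing argument exactly as in the gradient case (cf. Lemma 8.7 in \cite{DCV2}), with the crucial preliminary reduction supplied by Lemma \ref{equivalence of quasiconvexity} and Lemma \ref{decomposition lemma}. First I would reduce to the case where the sequence $\boldsymbol{d\phi^{\nu}}$ is $\boldsymbol{p}$-equiintegrable: for indices $i$ with $1 < p_i < \infty$ this is exactly the content of Lemma \ref{decomposition lemma}, which produces $\boldsymbol{v^{s}}$ with $\{|v^{s}_{i}|^{p_i}\}$ equiintegrable, agreeing with $d\phi^{s}_i$ off a set $\Omega_s$ of vanishing measure; since $f$ has growth $(\mathcal{C}_{\boldsymbol{p}})$ and the $G^u_i$ are $|\xi_i|^{p_i}$, the contribution over $\Omega_s$ is controlled by $\int_{\Omega_s}\alpha(1+\sum|d\phi^{s}_i|^{p_i})$, which tends to $0$ by the uniform boundedness in $L^{\boldsymbol{p}}$ together with $\operatorname{meas}\Omega_s\to 0$ (using the Dunford--Pettis/equiintegrability replacement). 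For $i$ with $p_i=\infty$ the sequence $d\phi^{\nu}_i$ is already bounded in $L^{\infty}$, hence trivially equiintegrable; for $i$ with $p_i=1$ we keep the extra hypothesis $\phi^{\nu}_i\to 0$ in $L^1$ and the growth there is only linear, $G^u_i=G^l_i=\alpha_i|\xi_i|$, which is what makes the $p_i=1$ slices manageable.

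Next comes the slicing itself. Fix a small cube $D$, and for $\lambda>0$ and $j=1,\dots,N$ consider the nested family of slightly shrunken cubes $D_0\supset D_1\supset\cdots\supset D_N$ with $\operatorname{dist}(\partial D_{j},\partial D_{j-1})\sim 1/N$, and smooth cutoffs $\theta_j$ equal to $1$ on $D_j$, supported in $D_{j-1}$, with $|\nabla\theta_j|\lesssim N$. Set $\boldsymbol{\phi^{\nu,j}} := \theta_j\,\boldsymbol{\phi^{\nu}}$, so that $\boldsymbol{\phi^{\nu,j}}\in W^{d,\boldsymbol{q}}_{T}(D;\boldsymbol{\Lambda^{k-1}})$ with zero tangential trace (indeed compactly supported in $D$), and $d\phi^{\nu,j}_i = \theta_j\,d\phi^{\nu}_i + d\theta_j\wedge\phi^{\nu}_i$. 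By Lemma \ref{equivalence of quasiconvexity} (the $W^{d,\boldsymbol{q}}$-quasiconvexity equivalence, applicable since each $\phi^{\nu,j}_i$ lies in the right trace-free space), we have $\int_D f(\boldsymbol{\xi}+d\boldsymbol{\phi^{\nu,j}})\geq f(\boldsymbol{\xi})\operatorname{meas}(D)$. The task is then to show that, averaging over $j=1,\dots,N$ and sending $\nu\to\infty$ then $N\to\infty$, the cut-off sequence has asymptotically the same energy as the original: the difference $\int_D [f(\boldsymbol{\xi}+d\boldsymbol{\phi^{\nu,j}}) - f(\boldsymbol{\xi}+d\boldsymbol{\phi^{\nu}})]$ is supported in the annulus $D_{j-1}\setminus D_j$ and is estimated by Proposition \ref{semicontinuity p lipscitz inequality}, applied on a cube $K$ containing the bounded ($p_i=\infty$) components; the $p$-Lipschitz bound turns this into a sum of terms like $\int_{D_{j-1}\setminus D_j}(1+|d\phi^{\nu}_i|^{\text{power}}+\cdots)(|\theta_j-1|\,|d\phi^{\nu}_i| + |d\theta_j\wedge\phi^{\nu}_i|)$. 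Averaging over $j$ distributes the annuli disjointly, so each $d\phi^{\nu}_i$-term gains a factor $1/N$ and each $d\theta_j$-term a factor $N\cdot(1/N)=O(1)$ but multiplied by $\|\phi^{\nu}_i\|$ on a small set; using equiintegrability to kill the high-power terms on sets of small measure, the strong convergence $\phi^{\nu}_i\to 0$ in $L^{p_i}$ for $p_i<\infty$ (Rellich, from the $W^{d,p_i}$ bound when $p_i>1$, or the explicit hypothesis when $p_i=1$) and weak-$\ast$ boundedness when $p_i=\infty$, these remainder terms vanish in the iterated limit.

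The main obstacle, and the point requiring the most care, is the interaction between the cut-off and the exterior derivative in the terms $d\theta_j\wedge\phi^{\nu}_i$: unlike the gradient case where one simply differentiates $\theta_j u^{\nu}$, here the ``lower order'' piece $d\theta_j\wedge\phi^{\nu}_i$ sits at the same form-degree $k_i$ as $d\phi^{\nu}_i$ and must be absorbed using strong $L^{p_i}$-convergence of $\phi^{\nu}_i$ itself — this is precisely why the hypothesis $\phi^{\nu}_i\to 0$ in $L^1$ is imposed when $p_i=1$ (there being no Rellich compactness in $W^{d,1}$), and why for $1<p_i<\infty$ one must invoke compact embedding of $W^{1,p_i}$ (via the gauge-fixed $\boldsymbol{\beta}^{\nu}$ of Lemma \ref{decomposition lemma}) rather than of $W^{d,p_i}$. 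Handling the case where some $p_i=\infty$ simultaneously with others finite requires splitting $f$'s increments through Proposition \ref{semicontinuity p lipscitz inequality} and restricting the $\infty$-components to the cube $Q$; the bounded increasing functions $\eta_i$ there are harmless since they are bounded on $Q$. Once these annular remainders are shown to be negligible, combining with the chain of inequalities from Lemma \ref{equivalence of quasiconvexity} and taking $\liminf_{\nu}$ yields $\liminf_{\nu}\int_D f(\boldsymbol{\xi}+d\boldsymbol{\phi^{\nu}})\geq f(\boldsymbol{\xi})\operatorname{meas}(D)$, as required.
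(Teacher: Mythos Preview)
Your core approach---De Giorgi slicing with cutoffs $\theta_j$, then invoking Lemma~\ref{equivalence of quasiconvexity} on the compactly supported $\theta_j\boldsymbol{\phi^{\nu}}$---matches the paper. But you over-engineer the argument and introduce one genuine error.

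First, the equiintegrability reduction via Lemma~\ref{decomposition lemma} and the Lipschitz estimate of Proposition~\ref{semicontinuity p lipscitz inequality} are \emph{not used} in the paper's proof of this lemma; the paper deploys both of those tools only later, in Theorem~\ref{semicontinuity without x}. Here the paper estimates the annular integral $\int_{D^{\mu}\setminus D^{\mu-1}} f(\boldsymbol{\xi}+\boldsymbol{d\omega^{\nu}_{\mu}})$ directly by the growth bound $(\mathcal{C}_{\boldsymbol{p}})$, obtaining a term of the form $\int_{D^{\mu}\setminus D^{\mu-1}}\bigl(1+\sum_{p_i\neq\infty}(|d\phi^{\nu}_i|^{p_i}+(aM/R)^{p_i}|\phi^{\nu}_i|^{p_i})\bigr)$. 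Summing over $\mu$ telescopes the annuli; dividing by $M$ kills the $|d\phi^{\nu}_i|^{p_i}$ contribution (it is merely uniformly bounded in $L^1$, no equiintegrability needed), and the $|\phi^{\nu}_i|^{p_i}$ contribution vanishes as $\nu\to\infty$ by strong $L^{p_i}$ convergence of $\phi^{\nu}_i$, which the paper secures at the outset by replacing $\phi^{\nu}_i$ with a gauge-fixed $W^{1,p_i}$ primitive and using Rellich. Your route via the $p$-Lipschitz inequality would also work, but it is heavier than necessary.

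Second, and more seriously, your claim that ``the difference $\int_D[f(\boldsymbol{\xi}+d\boldsymbol{\phi^{\nu,j}})-f(\boldsymbol{\xi}+d\boldsymbol{\phi^{\nu}})]$ is supported in the annulus $D_{j-1}\setminus D_j$'' is false: on $D\setminus D_{j-1}$ (where $\theta_j=0$) the integrands are $f(\boldsymbol{\xi})$ and $f(\boldsymbol{\xi}+\boldsymbol{d\phi^{\nu}})$, which differ. This outer contribution does not average to something small in $j$. The paper avoids this by a different bookkeeping: rather than comparing $\int_D f(\boldsymbol{\xi}+d\boldsymbol{\phi^{\nu,j}})$ to $\int_D f(\boldsymbol{\xi}+d\boldsymbol{\phi^{\nu}})$, it decomposes $\int_D f(\boldsymbol{\xi}+d\boldsymbol{\omega^{\nu}_{\mu}})$ as $\int_{D\setminus D^{\mu}}f(\boldsymbol{\xi})+\int_{D^{\mu}\setminus D^{\mu-1}}f(\boldsymbol{\xi}+d\boldsymbol{\omega^{\nu}_{\mu}})+\int_{D^{\mu-1}}f(\boldsymbol{\xi}+d\boldsymbol{\phi^{\nu}})$, so the outer region contributes only the constant $f(\boldsymbol{\xi})$ and can be absorbed into the left side. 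You should restructure your estimate along these lines.
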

\begin{proof}
 Note that by solving a boundary value problem as in the previous lemma, we can assume 
 $ \phi^{\nu}_{i} \rightharpoonup 0$ in  $W^{1,p_{i}}$  for all $i$ with $1 < p_{i} < \infty. $ By compactness of the embedding, this implies 
 $\phi^{\nu}_{i} \rightarrow 0 $ in $L^{p_{i}}\left(D; \Lambda^{k_{i}-1} \right).$
 If $p_{i} = \infty ,$ then by solving the same boundary value problem
  for some $n < q < \infty,$ we can assume $\phi^{\nu}_{i} \rightharpoonup 0$ in  $W^{1,q} \left(D; \Lambda^{k_{i}-1} \right).$ Compact embedding result then 
  implies $\phi^{\nu}_{i} \rightarrow 0 $ in $L^{\infty}\left(D; \Lambda^{k_{i}-1} \right).$ Thus, we can assume that 
  \begin{align*}
   \boldsymbol{d\phi^{\nu}} &\rightharpoonup 0 \quad \text{ in } L^{\boldsymbol{p}}\left(D;\boldsymbol{\Lambda^{k}}\right) \quad 
  ( \stackrel{\ast}{\rightharpoonup} \text{ if } p_{i}= \infty ), \\
\intertext{and }
\boldsymbol{\phi^{\nu}} &\rightarrow 0 \quad \text{ in } L^{\boldsymbol{p}}\left(D;\boldsymbol{\Lambda^{k-1}}\right).
  \end{align*}
 Now we choose a nested sequence of cubes, 
 each having sides parallel to the axes and each being compactly contained in the next. More precisely, we write $D^{0} \subset D^{1} \subset \ldots \subset D^{\mu} \subset 
 \ldots \subset D^{M} \subset  D,$ where $M \geq 1$ is a positive integer, $ \displaystyle R := \frac{1}{2}\operatorname*{dist} (D^{0}, \partial D)$ 
 and $\displaystyle  \operatorname*{dist} (D^{0}, \partial D^{\mu}) = \frac{\mu}{M}R,\text{ for all } 1\leq \mu \leq M.$
Then we choose $\theta_{\mu} \in C_{c}^{\infty}(D), 1 \leq \mu \leq M,$ such that 
 \begin{align*}
  0 \leq \theta_{\mu} \leq 1,\ \lvert \nabla\theta_{\mu} \rvert \leq \frac{aM}{R},\ \theta_{\mu} = \left\lbrace\begin{aligned}
                                                                                                             &1 \qquad \text{ if } x \in D^{\mu -1} \\
                                                                                                             &0 \qquad \text{ if } x \in D \setminus D^{\mu},
                                                                                                            \end{aligned}\right. \end{align*} 
where $a > 0$ is a constant. We now set $ \boldsymbol{\omega^{\nu}_{\mu}} = \theta_{\mu}\boldsymbol{\phi^{\nu}}
\in W^{d,\boldsymbol{p}}_{T}\left(\Omega;\boldsymbol{\Lambda^{k-1}}\right) $ and use lemma \mbox{\ref{equivalence of quasiconvexity}} to obtain,
\begin{align*}
 \int_{D} f(\boldsymbol{\xi}) &\leq \int_{D} f(\boldsymbol{\xi} + \boldsymbol{d\omega^{\nu}_{\mu}}(x)) \\
 &= \int_{D \setminus D^{\mu}} f(\boldsymbol{\xi}) +  \int_{D^{\mu}\setminus D^{\mu -1}} f(\boldsymbol{\xi} + \boldsymbol{d\omega^{\nu}_{\mu}}(x))
 +  \int_{D^{\mu -1}} f(\boldsymbol{\xi} + \boldsymbol{d\phi^{\nu}}(x)).
\end{align*}
This implies,
$$\int_{D^{\mu}} f(\boldsymbol{\xi}) \leq  \int_{D} f(\boldsymbol{\xi} + \boldsymbol{d\phi^{\nu}}(x))
   - \int_{D\setminus D^{\mu -1}} f(\boldsymbol{\xi} + \boldsymbol{d\phi^{\nu}}(x))+ \int_{D^{\mu}\setminus D^{\mu -1}} f(\boldsymbol{\xi} + \boldsymbol{d\omega^{\nu}_{\mu}}(x)) $$
Using the growth conditions and enlarging the domain of integration to $D\setminus D^{0},$ it is easy to see that the integral over $D\setminus D^{\mu -1}$ can be made 
arbitrarily small by choosing 
$R$ small enough. Growth conditions, bounds for $\theta_{\mu}, \nabla \theta_{\mu}$  and uniform bounds for $\phi_{i}^{\nu}$ in $W^{d,\infty}$ if $p_{i} = \infty$ gives,
\begin{multline*}
 \left\lvert \int_{D^{\mu}\setminus D^{\mu -1}} f(\boldsymbol{\xi} + \boldsymbol{d\omega^{\nu}_{\mu}}(x)) \right\rvert \\ \leq \alpha^{'} \int_{D^{\mu}\setminus D^{\mu -1}} 
 \left( 1 + \sum_{\substack{ i\\p_{i} \neq \infty}} \left( \gamma_{i}\lvert \xi_{i}\rvert^{p_{i}} 
 + \gamma^{'}_{i}\lvert d\phi^{\nu}_{i}\rvert^{p_{i}} + \gamma^{''}_{i} \left(\frac{a M}{R} \right)^{p_{i}}\lvert \phi^{\nu}_{i}\rvert^{p_{i}} \right) \right).
\end{multline*}
Now we sum over $1 \leq \mu \leq M$ and since the sum of the integrals over $D^{\mu}\setminus D^{\mu -1}$ telescopes, we get, after dividing by $M,$
\begin{multline*}
  \int_{D} f(\boldsymbol{\xi} + \boldsymbol{d\phi^{\nu}}(x)) - \left(\frac{ 1 }{M} \sum_{\mu = 1}^{M}\operatorname*{meas}(D^{\mu}) \right)f(\boldsymbol{\xi}) 
  \\ \geq -\varepsilon 
 - \frac{\alpha^{''}}{M}\int_{D^{M}\setminus D^{0}}\left( 1 + \sum_{\substack{ i\\p_{i} \neq \infty}} \left( \gamma^{'}_{i}\lvert 
 d\phi^{\nu}_{i}\rvert^{p_{i}} + \gamma^{''}_{i} \left(\frac{a M}{R} \right)^{p_{i}}\lvert \phi^{\nu}_{i}\rvert^{p_{i}} \right) \right) .
\end{multline*}
We let $\nu \rightarrow \infty.$ Using the fact that 
$\phi^{\nu}_{i} \rightarrow 0$ in $L^{p_{i}},$ choosing $R$ small enough, we get,
\begin{equation*}
  \int_{D} f(\boldsymbol{\xi} + \boldsymbol{d\phi^{\nu}}(x)) - \left(\frac{ 1 }{M} \sum_{\mu = 1}^{M}\operatorname*{meas}(D^{\mu}) \right)f(\boldsymbol{\xi}) 
  \geq -\varepsilon 
 - \frac{\alpha^{'''}}{M}.
\end{equation*}
Since $\operatorname*{meas} (D_{0}) \leq \frac{ 1 }{M} \sum_{\mu = 1}^{M}\operatorname*{meas}(D^{\mu})   \leq \operatorname*{meas} (D) ,$ letting $M \rightarrow \infty$ proves 
the lemma.
\end{proof}
\begin{remark}\label{remarkforstrongconvergence}
  (i) Since the lemma is essentially about changing the boundary values of a sequence up to a set of small measure, we can replace the additional assumption of strong convergence 
 $\phi^{\nu}_{i} \rightarrow 0 \text{ in } L^{1}\left( D; \Lambda^{k_{i} -1} \right)$ if  $p_{i} =1,$ by the assumption that $\phi^{\nu}_{i} \subset 
 W_{T}^{d,1}\left( D; \Lambda^{k_{i} -1} \right)$ for $p_{i} =1, k_{i} >1.$ In that case, we set $ \omega^{\nu}_{\mu, i} = \phi^{\nu}_{i} $ if $p_{i} =1$ and $k_{i} >1$ and 
 $ \omega^{\nu}_{\mu, i} = \theta_{\mu}\phi^{\nu}_{i} $ otherwise. Rest of the proof remains exactly the same as above.\smallskip
 
 (ii) If both $k_{i}=p_{i}=1,$ then the extra assumption of strong convergence is automatically satisfied, thanks to compactness of the embedding.\smallskip 
 
  (iii) The strong convergence assumption in $L^{1}$ or the assumption of the same boundary values, is quite common already in the classical calculus of variations 
  if we weaken the assumption of weak convergence of the gradients, see for example \mbox{\cite{FonsecaLeoniMullerGap}, \cite{FonsecaMullerLscL1}}, 
  also \mbox{\cite{KristensenBV}, \cite{KristensenRindlerBV}}.
\end{remark}

\begin{theorem}\label{semicontinuity without x}
  Let $ 0 \leq r \leq m$ be an integer. 
 $\boldsymbol{p} = (p_1,\ldots, p_m)$ where $1\leq p_i < \infty$ for all $1 \leq i \leq r $ and  $p_{r+1} = \ldots = p_{m} = \infty.$ 
 Let $\Omega\subset\mathbb{R}^n$ be open, bounded, smooth. 
 Let $f:\boldsymbol{\Lambda^k}\rightarrow \mathbb{R}$ be vectorially ext. quasiconvex, satisfying the growth condition $\left( \mathcal{C}_{\boldsymbol{p}}\right)$ . 
Let $$ \boldsymbol{\omega^{\nu}} \rightharpoonup \boldsymbol{\omega} \quad \text{ in } W^{d,\boldsymbol{p}}\left(D;\boldsymbol{\Lambda^{k-1}}\right) \quad 
  ( \stackrel{\ast}{\rightharpoonup} \text{ if } p_{i}= \infty ), $$
together with, 
\begin{align*}
  \text{ if } p_{i} =1,\text{ but } k_{i} \neq 1, \quad \left\lbrace \begin{aligned}
  &\text{either } \omega^{\nu}_{i} \rightarrow \omega_{i} \quad \text{ in } L^{1}\left( D; \Lambda^{k_{i} -1} \right) \\
   &\text{or }\omega^{\nu}_{i} - \omega_{i} \in W^{d,1}_{T}\left( D; \Lambda^{k_{i} -1} \right).
 \end{aligned}\right.
\end{align*}
Then
$$\liminf_{\nu \rightarrow \infty} \int_{\Omega} f( \boldsymbol{d\omega^{\nu}}) \geq  \int_{\Omega} f(\boldsymbol{d\omega} )  . $$
\end{theorem}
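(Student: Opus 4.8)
The plan is to run the localization (De Giorgi slicing) scheme of the classical sufficiency theorem (cf. \cite{DCV2}), with Lemma \ref{semicontinuity lemma} as the constant-coefficient building block, Proposition \ref{semicontinuity p lipscitz inequality} to absorb the error created by freezing the coefficients, and equiintegrability arguments to control both a ``bad'' exceptional set and the region near $\partial\Omega$. Passing to a subsequence, I assume $\lim_{\nu}\int_{\Omega}f(\boldsymbol{d\omega^{\nu}})$ exists and is finite (otherwise there is nothing to prove). Since $f$ is vectorially ext. quasiconvex it is, by Theorem \ref{implications for vectorial ext convexity} and the discussion preceding it, vectorially ext. one convex and hence separately convex, so Proposition \ref{semicontinuity p lipscitz inequality} is available; and by the upper bound in $(\mathcal{C}_{\boldsymbol{p}})$ together with $\boldsymbol{d\omega}\in L^{\boldsymbol{p}}$, one has $f(\boldsymbol{d\omega})\in L^{1}(\Omega)$.

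First I would reduce to a sequence of exact forms with equiintegrable $p_{i}$-th powers. Applying the decomposition lemma \ref{decomposition lemma} to the sub-tuple of components with $1<p_{i}<\infty$ yields (after relabelling) forms $v^{s}_{i}=du^{s}_{i}$ with $\{|v^{s}_{i}|^{p_{i}}\}$ equiintegrable, $u^{s}_{i}\rightharpoonup\beta_{i}$ in $W^{1,p_{i}}$ (so $u^{s}_{i}\to\beta_{i}$ in $L^{p_{i}}$ and $d\beta_{i}=d\omega_{i}$), and $v^{s}_{i}=d\omega^{s}_{i}$ off a set $\Omega_{s}$ with $\operatorname*{meas}\Omega_{s}\to 0$; on the remaining factors (where $p_{i}\in\{1,\infty\}$) I keep the forms, noting $\{|d\omega^{s}_{i}|\}$ is equiintegrable by Dunford--Pettis when $p_{i}=1$ and $\{d\omega^{s}_{i}\}$ is uniformly bounded when $p_{i}=\infty$. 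Let $\boldsymbol{v^{s}}$ denote the resulting tuple. The lower bound in $(\mathcal{C}_{\boldsymbol{p}})$ shows $\{f(\boldsymbol{d\omega^{s}})^{-}\}$ and $\{f(\boldsymbol{v^{s}})^{-}\}$ are equiintegrable (in each of the three regimes $G^{l}_{i}$ evaluated along these sequences is equiintegrable: from the gap $q_{i}<p_{i}$, from Dunford--Pettis, from uniform boundedness), so $\int_{\Omega_{s}}f(\boldsymbol{d\omega^{s}})\ge -o(1)$; meanwhile the upper bound and equiintegrability of $\{|v^{s}_{i}|^{p_{i}}\}$ give $\int_{\Omega_{s}}|f(\boldsymbol{v^{s}})|\to 0$. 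Hence $\int_{\Omega}f(\boldsymbol{d\omega^{s}})=\int_{\Omega\setminus\Omega_{s}}f(\boldsymbol{v^{s}})+\int_{\Omega_{s}}f(\boldsymbol{d\omega^{s}})\ge\int_{\Omega}f(\boldsymbol{v^{s}})-o(1)$, and it suffices to prove $\liminf_{s}\int_{\Omega}f(\boldsymbol{v^{s}})\ge\int_{\Omega}f(\boldsymbol{d\omega})$.

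Now fix $\varepsilon>0$ and localize. Choose pairwise disjoint axis-parallel open subcubes $D_{1},\dots,D_{L}\subset\subset\Omega$ with $\operatorname*{meas}(\Omega\setminus\bigcup_{l}D_{l})<\varepsilon$ and constants $\boldsymbol{\xi}_{l}\in\boldsymbol{\Lambda^{k}}$ such that the step form $\boldsymbol{\sigma}=\sum_{l}\boldsymbol{\xi}_{l}\mathbf{1}_{D_{l}}$ is $L^{p_{i}}$-close to $\boldsymbol{d\omega}$ on the factors with $1\le p_{i}<\infty$, while on the factors with $p_{i}=\infty$ one takes $\boldsymbol{\xi}_{l}$ from a continuous $L^{1}$-approximant of $\boldsymbol{d\omega}$ (so $\sigma_{i}$ is $L^{1}$-close to $d\omega_{i}$ with a uniform $L^{\infty}$-bound, which is enough because the dominating weights appearing in Proposition \ref{semicontinuity p lipscitz inequality} are equiintegrable along $\boldsymbol{v^{s}}$). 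On each $D_{l}$ write $f(\boldsymbol{v^{s}})=f(\boldsymbol{\xi}_{l}+\boldsymbol{d\phi^{s}})+R^{s}_{l}$, where $\boldsymbol{\phi^{s}}\rightharpoonup 0$ in $W^{d,\boldsymbol{p}}(D_{l};\boldsymbol{\Lambda^{k-1}})$ and $\boldsymbol{d\phi^{s}}=\boldsymbol{v^{s}}-\boldsymbol{d\omega}$ (take $u^{s}_{i}-\beta_{i}$ on the factors with $1<p_{i}<\infty$ and $\omega^{s}_{i}-\omega_{i}$ on the rest), so that $\boldsymbol{\xi}_{l}+\boldsymbol{d\phi^{s}}=\boldsymbol{\sigma}+\boldsymbol{v^{s}}-\boldsymbol{d\omega}$ on $D_{l}$. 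Proposition \ref{semicontinuity p lipscitz inequality}, H\"older on the finite-exponent factors, and equiintegrability of the weights on the $\infty$-factors bound $\sum_{l}\int_{D_{l}}|R^{s}_{l}|$ by $o_{\varepsilon}(1)$ uniformly in $s$; and Lemma \ref{semicontinuity lemma} gives $\liminf_{s}\int_{D_{l}}f(\boldsymbol{\xi}_{l}+\boldsymbol{d\phi^{s}})\ge f(\boldsymbol{\xi}_{l})\operatorname*{meas}(D_{l})$. Summing over $l$, then letting $s\to\infty$ and $\varepsilon\to 0$ — using $\sum_{l}f(\boldsymbol{\xi}_{l})\operatorname*{meas}(D_{l})=\int_{\bigcup D_{l}}f(\boldsymbol{\sigma})\to\int_{\Omega}f(\boldsymbol{d\omega})$ (Proposition \ref{semicontinuity p lipscitz inequality} once more and $f(\boldsymbol{d\omega})\in L^{1}$) and equiintegrability of $\{f(\boldsymbol{v^{s}})^{-}\}$ on $\Omega\setminus\bigcup D_{l}$ — yields the claim.

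The main obstacle is verifying, on each subcube, the side hypotheses of Lemma \ref{semicontinuity lemma} for indices with $p_{i}=1$, $k_{i}\neq 1$. When $k_{i}=1$ the compact embedding $W^{1,1}\hookrightarrow\hookrightarrow L^{1}$ forces $\phi^{s}_{i}\to 0$ in $L^{1}(D_{l})$ automatically, and the same holds if $\omega^{\nu}_{i}\to\omega_{i}$ in $L^{1}(\Omega)$. The delicate case is $\omega^{\nu}_{i}-\omega_{i}\in W^{d,1}_{T}(\Omega)$: weak $L^{1}$-convergence of $d\omega^{\nu}_{i}$ gives no compactness (here the decomposition lemma does not apply), and, worse, the restriction of $\omega^{\nu}_{i}-\omega_{i}$ to a subcube $D_{l}\subset\subset\Omega$ retains no boundary information, whereas the variant of Lemma \ref{semicontinuity lemma} recorded in Remark \ref{remarkforstrongconvergence} requires the test form to lie in $W^{d,1}_{T}(D_{l})$. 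Reconciling the localization with this requirement — e.g.\ by a preliminary Hodge/gauge-fixing step that replaces $\omega^{\nu}_{i}$ by a cohomologous form converging strongly in $L^{1}$, thereby reducing to the case already treated — is the technical heart of the argument; the secondary nuisance is that $L^{\infty}$-coefficients need not be nearly constant on small cubes, which is why in the localization step I froze a continuous approximant and leaned on the equiintegrability of the dominating weights.
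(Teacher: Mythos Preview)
Your proof follows the paper's scheme almost step by step: the reduction to equiintegrable $\{|d\omega^{\nu}_{i}|^{p_{i}}\}$ via Lemma~\ref{decomposition lemma} is the paper's Step~1, and the freezing on small cubes, with Proposition~\ref{semicontinuity p lipscitz inequality} absorbing the freezing error and Lemma~\ref{semicontinuity lemma} supplying the core inequality on each cube, is the paper's Step~2 (the paper splits the integrand into four pieces $I_{1},\dots,I_{4}$, but these are exactly your boundary term, your $R^{s}_{l}$, the $\liminf$ term on cubes, and the $f(\boldsymbol{\sigma})\to f(\boldsymbol{d\omega})$ term).

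The issue you isolate as ``the technical heart'' is also the point where the paper's own proof is least explicit. At the end of Step~2 the paper simply writes that on each subcube $D_{s}$ one has either $\omega^{\nu}_{i}\to\omega_{i}$ in $L^{1}$ or $\omega^{\nu}_{i}-\omega_{i}\in W^{d,1}_{T}$, and then invokes ``lemma~\ref{semicontinuity lemma}, remark~\ref{remarkforstrongconvergence}(i)'' --- without addressing the fact that the hypothesis $\omega^{\nu}_{i}-\omega_{i}\in W^{d,1}_{T}(\Omega)$ does not restrict to $W^{d,1}_{T}(D_{s})$. So you have correctly located the one step that neither argument spells out. Your suggested remedy (a Hodge/gauge-fixing step producing a cohomologous representative converging strongly in $L^{1}$) runs into the well-known failure of $L^{1}$-elliptic estimates for the Hodge system, so it does not close the gap either; any honest completion of this case seems to require merging the slicing with the cube decomposition rather than applying Lemma~\ref{semicontinuity lemma} as a black box on each $D_{s}$.
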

\begin{remark}
  The theorem allows $p_{i} =1$ for some (or all) $i,$ with the mentioned additional assumption if $k_{i} > 1$ as well. However, even for $m=1$ and $k=1,$ this is not enough for minimization problems in  $W^{1,1}$, as in well-known in 
 the classical calculus of variations. Since $W^{1,1}$ is non-reflexive, minimizing sequences, even if uniformly bounded in $W^{1,1}$ norm, 
 need not weakly converge to a weak limit in $W^{1,1}.$   
\end{remark}

\begin{proof}
  We need to show that 
 $$ \liminf_{\nu \rightarrow \infty} I(\boldsymbol{\omega^{\nu}}) \geq I(\boldsymbol{\omega}), $$
 for any sequence 
 $$ \boldsymbol{\omega^{\nu}} \rightharpoonup \boldsymbol{\omega} \quad \text{ in } W^{d,\boldsymbol{p}}\left(\Omega;\boldsymbol{\Lambda^{k-1}}\right) \quad 
  ( \stackrel{\ast}{\rightharpoonup} \text{ if } p_{i}= \infty ). $$\bigskip
 
We divide the proof into several steps.

\emph{Step 1} First we show that it is enough to prove the theorem under the additional hypotheses that $\lvert d\omega^{\nu}_{i} \rvert^{p_{j}}$ is equiintegrable for every 
$1 \leq i  \leq r.$ Suppose we have shown the theorem with this additional assumption. Then for any sequence 
$$ \boldsymbol{\omega^{\nu}} \rightharpoonup \boldsymbol{\omega} \quad \text{ in } W^{d,\boldsymbol{p}}\left(\Omega;\boldsymbol{\Lambda^{k-1}}\right), $$
 we first restrict our attention to a subsequence, still denoted by $\lbrace \boldsymbol{\omega^{\nu}} \rbrace$ such that the limit inferior is realized, i.e 
 \begin{equation*}
  L:= \liminf_{\nu \rightarrow \infty} \int_{\Omega}f\left(\boldsymbol{d \omega^{\nu}}(x)\right)\,dx = \lim_{\nu \rightarrow \infty} \int_{\Omega}f\left(\boldsymbol{d \omega^{\nu}}(x)\right)\,dx .
 \end{equation*}

Now we use lemma \ref{decomposition lemma} to find, passing to a subsequence if necessary, a sequence 
$\lbrace v^{\nu}_{i} \rbrace \subset L^{p_{i}}$ such that $\lbrace \lvert v^{\nu}_{i} \rvert^{p_{i}}\rbrace$ is equiintegrable and 
\begin{align*}
 v^{\nu}_{i} \rightharpoonup d\omega_{i} \text{ in } L^{p_{i}}(\Omega, \Lambda^{k_{i}}) \\
 \intertext{ and }
 \lim_{\nu \rightarrow \infty} \operatorname*{meas} \Omega_{\nu} = 0,
\end{align*}
where 
$$ \Omega_{\nu} := \lbrace x \in \Omega:   v^{\nu}_{i}(x) \neq d\omega^{\nu}_{i}(x) \rbrace ,$$ for all $1 \leq i \leq r$ with $p_{i} > 1.$ 
Note also that if $p_{i} = 1,$ we can take
 $v^{\nu}_{i} = d\omega_{i}^{\nu},$  since equiintegrability follows from the weak convergence.  
 
Now, we have, using $\left( \mathcal{C}_{\boldsymbol{p}}\right)$,
\begin{align*}
 \int_{\Omega}f\left(\boldsymbol{d \omega^{\nu}}(x)\right)\,dx \geq \int_{\Omega\setminus \Omega_{\nu}}f\left(v_{1}^{\nu}(x), \ldots, v_{r}^{\nu}(x),d \omega_{r+1}^{\nu}(x), \ldots,
\right. \left. d \omega_{r+1}^{\nu}(x) \right)\,dx \\- \alpha \int_{\Omega_{\nu}} \left( C + \sum_{i=1}^{r} \lvert d\omega^{\nu}_{i}\rvert^{\widetilde{q}_{i}} \right),
\end{align*}
where $C$ is a positive constant, depending on the uniform $L^{\infty}$ bounds of $\lbrace d\omega^{\nu}_{i} \rbrace$ and $\eta_{i}$s in $\left( \mathcal{C}_{\boldsymbol{p}}\right)$, 
for all $r+1 \leq i \leq m$ and $\widetilde{q}_{i} = q_{i},$ as given in $\left( \mathcal{C}_{\boldsymbol{p}}\right)$, if $p_{i} > 1$ and $\widetilde{q}_{i} = 1$ if $p_{i} = 1$ 
for any $1 \leq i \leq m.$

Using $\left( \mathcal{C}_{\boldsymbol{p}}\right)$ again, we obtain,
\begin{align*}
 \int_{\Omega}f\left(\boldsymbol{d \omega^{\nu}}(x)\right)\geq \int_{\Omega}f\left(v_{1}^{\nu}, \ldots, v_{r}^{\nu},d \omega_{r+1}^{\nu}, \ldots,
 d \omega_{r+1}^{\nu} \right) \\ - \alpha \int_{\Omega_{\nu}} \left( C + \sum_{i=1}^{r} \left( \lvert d\omega^{\nu}_{i}\rvert^{\widetilde{q}_{i}} + 
 \lvert v^{\nu}_{i}\rvert^{p_{i}} \right) \right) .
 \end{align*}
 Now we have $ \lim_{\nu \rightarrow \infty} \operatorname*{meas} \Omega_{\nu} = 0$ , $\lbrace \lvert v^{\nu}_{i} \rvert^{p_{i}}\rbrace$ is equiintegrable by construction and 
 $\lbrace \lvert d\omega^{\nu}_{i}\rvert^{\widetilde{q}_{i}} \rbrace $ is equiintegrable 
 since $\widetilde{q}_{i} = q_{i} < p_{i}$ if $p_{i} > 1$ and $\widetilde{q}_{i} = 1$ if $p_{i} = 1.$ Using these facts, we obtain,
 \begin{align*}
  L = \lim_{\nu \rightarrow \infty} \int_{\Omega}f\left(\boldsymbol{d \omega^{\nu}}(x)\right)\,dx \geq \liminf_{\nu \rightarrow \infty}
  \int_{\Omega}f\left(v_{1}^{\nu}, \ldots, v_{r}^{\nu},d \omega_{r+1}^{\nu}, \ldots,
 d \omega_{r+1}^{\nu} \right) \\ \geq \int_{\Omega}f\left(\boldsymbol{d \omega}(x)\right)\,dx,
 \end{align*}
by hypotheses. This proves our claim.\smallskip

\noindent \emph{Step 2} Now by Step 1, we can assume, in addition that  $\lvert d\omega^{\nu}_{i} \rvert^{p_{j}}$ is equiintegrable for every 
$1 \leq i  \leq r.$ Now we approximate $\Omega$ by a union of cubes $D_{s}$ with sides parallel to the axes and whose edge length is $\frac{1}{h}$, where $h$ is an integer.
We denote this union by $H_{h}$ and choose $h$ large enough such that
$$ \operatorname*{meas}(\Omega - H_{h}) \leq \delta \quad \text{ where } H_{h}:= \bigcup D_{s}.$$

Also, we define the average of $d\omega_{i}$ over each of the cubes $D_{s}$ to be,
$$ \xi^{i}_{s} := \frac{1}{\operatorname*{meas}(D_{s})}\int_{D_{s}}d\omega_{i} \in \Lambda^{k_{i}}. $$
Also, let $\boldsymbol{\xi_{s}} := \left( \xi_{s}^{1},\ldots,\xi^{m}_{s}\right)$ and $\displaystyle \boldsymbol{\xi} (x) := \boldsymbol{\xi_{s}}\chi_{D_{s}}(x) $ for every $x \in H_{h}.$ 
Since as the size of the cubes shrink to zero, $d \omega_{i}$ converges to $\xi_{i}$ in $ L^{p_{i}}\left(\Omega;\Lambda^{k_{i}}\right)$ for each $1 \leq i \leq r,$ we obtain, 
by choosing $h$ large enough,
\begin{equation}\label{shrinking cube estimate 11}
 \left( \sum_{s} \int_{D_{s}} \lvert d\omega_{i} -\xi^{i}_{s} \rvert^{p_{i}} \right)^{\frac{1}{p_{i}}} \leq C_{1}\epsilon,
\end{equation}
for every $1 \leq i \leq r.$
Also, by the same argument, we obtain, by choosing $h$ large enough,
\begin{equation}\label{shrinking cube estimate 12}
  \sum_{s} \int_{D_{s}} \lvert d\omega_{i} -\xi^{i}_{s} \rvert  \leq C_{2}\epsilon,
\end{equation}
for every $r+1 \leq i \leq m.$

Now consider
\begin{align*}
 I( \boldsymbol{\omega^{\nu}}) - I(\boldsymbol{\omega}) &= \int_{\Omega} \left[ f\left( \boldsymbol{d\omega^{\nu}} (x) \right) 
 -  f\left( \boldsymbol{d \omega}(x)\right) \right]\,dx \\
  &= I_{1} + I_{2} + I_{3} + I_{4},
\end{align*}
where
\begin{gather*}
 I_{1} := \int_{\Omega - H_{h}} \left[ f\left(\boldsymbol{d \omega^{\nu}}(x)\right) -  f\left(\boldsymbol{d \omega}(x)\right) \right]\,dx , \\
  I_{2} := \sum_{s} \int_{D_{s}} \left[ f\left(\boldsymbol{d\omega} + ( \boldsymbol{d \omega^{\nu}} - \boldsymbol{d\omega}) \right) 
  -  f\left(\boldsymbol{\xi_{s}} + ( \boldsymbol{d \omega^{\nu}} - \boldsymbol{d\omega}) \right)  \right]\,dx , \\
   I_{3} := \sum_{s} \int_{D_{s}}\left[ f\left(\boldsymbol{\xi_{s}} + ( \boldsymbol{d \omega^{\nu}} - \boldsymbol{d\omega}) \right) -  f\left(\boldsymbol{\xi_{s}}\right) \right]\,dx , \\
    I_{4} := \sum_{s} \int_{D_{s}} \left[ f\left(\boldsymbol{\xi_{s}}\right) -  f\left(\boldsymbol{d \omega}\right) \right]\,dx .
\end{gather*}
Now we need to estimate $I_{1}, I_{2}$ and $I_{4}.$ The estimate of $I_{1}$ is similar to the classical case using the 
growth condition $\left( \mathcal{C}_{\boldsymbol{p}}\right)$. We only show the estimate on $I_{2},$ as the estimate of $I_{4}$ can be proved similarly.\smallskip

\emph{Estimation of $I_{2}$:} Since $f$ is vectorially ext. quasiconvex, it is separately convex and since both 
$\left\lbrace d\omega_{i} + ( d \omega^{\nu}_{i} - d\omega_{i}) \right\rbrace$ and 
$ \left\lbrace \xi^{i}_{s} + ( d \omega^{\nu}_{i} - d\omega_{i}) \right\rbrace$ is uniformly bounded in 
$L^{\boldsymbol{\infty}}\left(\Omega;\Lambda^{k_{i}}\right)$ for every $r+1 \leq i \leq m$, using proposition \ref{semicontinuity p lipscitz inequality}, we have,
\begin{align*}
 \lvert I_{2} \rvert \leq  &\sum_{s}  \int_{D_{s}}\sum_{i=1}^{r} \beta_{i}  \left(  1 + \sum_{j=1}^{r} \left( 
 \lvert  d \omega^{\nu}_{j}  \rvert^{\frac{p_{j}}{p'_{i}} }  + 
   \lvert \xi^{j}_{s} + ( d \omega^{\nu}_{j} - d\omega_{j})  \rvert^{\frac{p_{j}}{p'_{i}}} \right)  \right)  \lvert  d\omega_{i} -  \xi^{i}_{s} \rvert \notag 
    \\ &+ \sum_{s} \int_{D_{s}} \sum_{i=r+1}^{m} \beta_{i} \left(  1 + \sum_{j=1}^{r} \left( \lvert  d \omega^{\nu}_{j} \rvert^{p_{j}}    + 
   \lvert \xi^{j}_{s} + ( d \omega^{\nu}_{j} - d\omega_{j}) \rvert^{p_{j}} \right)  \right)  \lvert  d\omega_{i} -  \xi^{i}_{s} \rvert
\end{align*}
The terms in the first sum can be easily estimated by using H\"{o}lder inequality and the estimate \eqref{shrinking cube estimate 11}. Note also that the exponents 
$\frac{p_{j}}{p'_{i}}$ are the precise exponents for this to work. For the second sum, we have, for some positive constants $\widetilde{\beta}_{i}$s, 
\begin{multline*}
 \sum_{s} \int_{D_{s}} \sum_{i=r+1}^{m} \beta_{i}  \left(  1 + \sum_{j=1}^{r} \left( \lvert  d \omega^{\nu}_{j} \rvert^{p_{j}}    + 
   \lvert \xi^{j}_{s} + ( d \omega^{\nu}_{j} - d\omega_{j}) \rvert^{p_{j}} \right)  \right)  \lvert  d\omega_{i} -  \xi^{i}_{s} \rvert \\
 \leq \sum_{s} \int_{D_{s}} \sum_{i=r+1}^{m} \widetilde{\beta}_{i} \left(  1 + \sum_{j=1}^{r} \left( \lvert d \omega^{\nu}_{j}\rvert^{p_{j}}    + 
   \lvert d\omega_{j} - \xi^{j}_{s} \rvert^{p_{j}} \right)  \right)  \lvert  d\omega_{i} -  \xi^{i}_{s} \rvert.
\end{multline*}

Now the terms of the form 
$$ \sum_{s} \int_{D_{s}}\widetilde{\beta}_{i} \lvert  d\omega_{i} -  \xi^{i}_{s} \rvert $$
can be easily estimated using estimate \eqref{shrinking cube estimate 12}. For the other terms, for any $i,j$, $r+1 \leq i \leq m$ and $1 \leq j \leq r,$ we have, 
\begin{align}
 \sum_{s} \int_{D_{s}} \widetilde{\beta}_{i} \lvert d\omega_{j} - \xi^{j}_{s} \rvert^{p_{j}}\lvert  d\omega_{i} -  \xi^{i}_{s} \rvert 
 \leq 2 \widetilde{\beta}_{i} \lVert d\omega_{i} \rVert_{L^{\infty}(\Omega)} \sum_{s} \int_{D_{s}} \lvert d\omega_{j} - \xi^{j}_{s} \rvert^{p_{j}}.
\end{align}
Using the estimate \eqref{shrinking cube estimate 11}, these terms can be made as small as we please by choosing $h$ large enough. Now we estimate the terms of the type  
$$ \sum_{s} \int_{D_{s}}  \widetilde{\beta}_{i} \lvert d \omega^{\nu}_{j}\rvert^{p_{j}} \lvert  d\omega_{i} -  \xi^{i}_{s} \rvert .$$
Since $\lbrace \lvert d \omega^{\nu}_{j} \rvert^{p_{j}}\rbrace$ is uniformly bounded in $L^1$ and is equiintegrable, we know,
\begin{equation*}
 \lim_{M \rightarrow \infty} \sup_{\nu} \int\limits_{\Omega \cap \lbrace \lvert d \omega^{\nu}_{j} \rvert^{p_{j}} > M \rbrace} \lvert d \omega^{\nu}_{j} \rvert^{p_{j}} = 0. 
\end{equation*}

This implies, for any $\epsilon > 0,$ there exists $M = M(\epsilon)$ such that 

\begin{equation*}
  \int\limits_{\Omega \cap \lbrace \lvert d \omega^{\nu}_{j} \rvert^{p_{j}} > 
  M \rbrace} \lvert d \omega^{\nu}_{j} \rvert^{p_{j}} < \frac{\epsilon}{2\widetilde{\beta}_{i}\lVert d\omega_{i} \rVert_{L^{\infty}(\Omega)}}  \text{ for all } \nu.
\end{equation*}

Thus, we have, for any  $i,j,$
$r+1 \leq i \leq m$ and $1 \leq j \leq r,$
\begin{align*}
 &\sum_{s} \int_{D_{s}}  \widetilde{\beta}_{i} \lvert d \omega^{\nu}_{j}\rvert^{p_{j}} \lvert  d\omega_{i} -  \xi^{i}_{s} \rvert \\
 &= \int\limits_{H_{h} \cap \lbrace \lvert d \omega^{\nu}_{j} \rvert^{p_{j}} > M \rbrace}  
 \widetilde{\beta}_{i} \lvert d \omega^{\nu}_{j}\rvert^{p_{j}} \lvert  d\omega_{i} -  \xi^{i}_{s} \rvert
  + \int\limits_{H_{h} \cap \lbrace \lvert d \omega^{\nu}_{j} \rvert^{p_{j}} \leq M \rbrace}  
 \widetilde{\beta}_{i} \lvert d \omega^{\nu}_{j}\rvert^{p_{j}} \lvert  d\omega_{i} -  \xi^{i}_{s} \rvert \\
 &\leq \epsilon + \widetilde{\beta}_{i} M \sum_{s} \int_{D_{s}}\lvert  d\omega_{i} -  \xi^{i}_{s} \rvert .
\end{align*}
Estimate \eqref{shrinking cube estimate 12} concludes the argument. \smallskip

 Using all the estimates and
taking the limit $\nu \rightarrow \infty $, we obtain,
\begin{multline*}
  \liminf_{\nu \rightarrow \infty}  I( \boldsymbol{\omega^{\nu}}) - I(\boldsymbol{\omega}) \geq -(C_{I_{1}}+C_{I_{3}}+C_{I_{4}})\epsilon \\ + 
\sum_{s} \liminf_{\nu \rightarrow \infty}  \int_{D_{s}}\left[ f\left(\boldsymbol{\xi_{s}} + ( \boldsymbol{d \omega^{\nu}} - \boldsymbol{d\omega}) \right) -  f\left(\boldsymbol{\xi_{s}}\right) \right]\,dx .
\end{multline*}

Since $$\boldsymbol{d \omega^{\nu}} - \boldsymbol{d\omega} 
\rightharpoonup 0 \quad \text{ in } W^{d,\boldsymbol{p}}\left(D_{s};\boldsymbol{\Lambda^{k-1}}\right) $$
and either 
$$\omega^{\nu}_{i} \rightarrow \omega_{i} \quad \text{ in } L^{1}\left( D; \Lambda^{k_{i} -1} \right) \qquad \text{or } \quad 
\omega^{\nu}_{i} - \omega_{i} \in W^{d,1}_{T}\left( D; \Lambda^{k_{i} -1} \right),$$  if $p_{i} =1,$ but $k_{i} \neq 1,$
for every $s$, using lemma \ref{semicontinuity lemma}, remark \ref{remarkforstrongconvergence}(i)
and the fact that $\epsilon$ is arbitrary, we have finished the proof of the theorem.
\end{proof}
\subsection{Lower semicontinuity for general quasiconvex functions}

We first show that the explicit dependence on $x$, but no explicit dependence on $\boldsymbol{\omega}$ for a vectorially ext. quasiconvex functions can be handled in the standard way.
We start by defining the growth conditions that we need for this case.
\begin{definition}[Growth conditions II]
 Let $\Omega\subset\mathbb{R}^n$ be open, bounded.
  Let $f:\Omega\times\boldsymbol{\Lambda^k}\rightarrow \mathbb{R}$ be a Carath\'eodory function. \smallskip
  
  \noindent $f$ is said to be of growth $\left(\mathcal{C}^{x}_{\boldsymbol{p}}\right)$, if , for almost every $x \in \Omega$ and
  for every $\boldsymbol{\xi} =  (\xi_1, \ldots, \xi_m ) \in \boldsymbol{\Lambda^{k}}$,  $f$ satisfies, 
  \begin{align*}
   -\beta(x) - \sum_{i=1}^{m}  G^{l}_{i}( \xi_{i} )  \leq f(x, \boldsymbol{\xi}) \leq \beta(x) +  \sum_{i=1}^{m} G^{u}_{i}(\xi_i) ,\tag{ $\mathcal{C}^{x}_{\boldsymbol{p}} $}
  \end{align*}
where $\beta \in L^{1}(\Omega)$ is nonnegative and the functions $G^{l}_{i}$s in the lower bound and the functions $G^{u}_{i}$s in the upper bound has the following form:
\begin{itemize}
 \item If $p_i =1$, then,
                   \begin{align*}
                    G^{l}_{i}(\xi_i) = G^{u}_{i}(\xi_i) = \alpha_{i} \lvert \xi_{i} \rvert \qquad \qquad \text{ for some constant } \alpha_i \geq 0. 
                    \end{align*}
\item If $1 < p_i < \infty$, then,
\begin{align*}
 G^{l}_{i}(\xi_i) = \alpha_{i}\lvert \xi_{i} \rvert^{q_i}   \quad \text{ and } \quad G^{u}_{i}(\xi_i) = g_{i}(x)\lvert \xi_{i} \rvert^{p_i} , 
\end{align*}
for some $ 1 \leq q_i < p_i $ and for some constant $ \alpha_i \geq 0$ and some non-negative measurable function $g_{i}.$ 
\item If $p_i = \infty$, then, 
\begin{align*}
 G^{l}_{i}(\xi_i) = G^{u}_{i}(\xi_i) = \eta_{i}\left( \lvert \xi_{i} \rvert \right)  . 
\end{align*}
for some nonnegative, continuous, increasing function  $\eta_i$. 
\end{itemize}
\end{definition}
Under these growth conditions,  we can prove the semicontinuity result for functionals with explicit dependence on $x$. With theorem \ref{semicontinuity without x} in hand, 
the proof is very similar to classical way to handle measurable dependence on $x$ in semicontinuity theorems (cf. theorem 8.8 and theorem 8.11 in \cite{DCV2}). 
\begin{theorem}[Sufficient condition]\label{semicontinuity with x dependence}
   Let $ 0 \leq r \leq m$ be an integer. 
 $\boldsymbol{p} = (p_1,\ldots, p_m)$ where $1\leq p_i < \infty$ for all $1 \leq i \leq r $ and  $p_{r+1} = \ldots = p_{m} = \infty.$ 
 Let $\Omega\subset\mathbb{R}^n$ be open, bounded, smooth. 
 Let $f:\Omega\times\boldsymbol{\Lambda^k}\rightarrow \mathbb{R}$ be a Carath\'eodory function, satisfying the growth condition $\left( \mathcal{C}^{x}_{\boldsymbol{p}}\right)$ 
 and $\boldsymbol{\xi} \mapsto f(x, \boldsymbol{\xi})$ is vectorially ext. quasiconvex for a.e $x \in \Omega.$ 
 Let $$ \boldsymbol{\omega^{\nu}} \rightharpoonup \boldsymbol{\omega} \quad \text{ in } W^{d,\boldsymbol{p}}\left(D;\boldsymbol{\Lambda^{k-1}}\right) \quad 
  ( \stackrel{\ast}{\rightharpoonup} \text{ if } p_{i}= \infty ), $$
 together with, 
 \begin{align*}
  \text{ if } p_{i} =1,\text{ but } k_{i} \neq 1, \quad \left\lbrace \begin{aligned}
  &\text{either } \omega^{\nu}_{i} \rightarrow \omega_{i} \quad \text{ in } L^{1}\left( D; \Lambda^{k_{i} -1} \right) \\
   &\text{or }\omega^{\nu}_{i} - \omega_{i} \in W^{d,1}_{T}\left( D; \Lambda^{k_{i} -1} \right).
 \end{aligned}\right.
\end{align*}
Then
$$\liminf_{\nu \rightarrow \infty} \int_{\Omega} f( x, \boldsymbol{d\omega^{\nu}}) \geq  \int_{\Omega} f(x, \boldsymbol{d\omega} )  . $$
\end{theorem}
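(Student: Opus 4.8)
The plan is to reduce the statement to Theorem~\ref{semicontinuity without x} by the standard device for passing from Carath\'eodory to $x$-independent integrands, namely the Scorza--Dragoni theorem combined with a localization/freezing argument (compare the passage from Theorem~8.8 to Theorem~8.11 in \cite{DCV2}). First I would pass to a subsequence of $\lbrace\boldsymbol{\omega^{\nu}}\rbrace$ along which $\liminf_{\nu}\int_{\Omega}f(x,\boldsymbol{d\omega^{\nu}})$ is attained as a limit $L$; then, applying Lemma~\ref{decomposition lemma} to the factors with $1<p_{i}<\infty$ exactly as in Step~1 of the proof of Theorem~\ref{semicontinuity without x} (for $p_{i}=1$ the sequence $\lbrace d\omega^{\nu}_{i}\rbrace$ is already equiintegrable by the Dunford--Pettis theorem, and for $p_{i}=\infty$ it is bounded in $L^{\infty}$), I may assume in addition that $\lbrace\lvert d\omega^{\nu}_{i}\rvert^{p_{i}}\rbrace$ is equiintegrable for every $1\le i\le r$; the small-measure set produced by the decomposition is absorbed into the error terms below via the lower bound in $\left(\mathcal{C}^{x}_{\boldsymbol{p}}\right)$.

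Next, given $\varepsilon>0$, I would apply the Scorza--Dragoni theorem to $f$ and Lusin's theorem to $\beta$ and to the functions $g_{i}$ of $\left(\mathcal{C}^{x}_{\boldsymbol{p}}\right)$, obtaining a compact set $C_{\varepsilon}\subset\Omega$, with $\operatorname*{meas}(\Omega\setminus C_{\varepsilon})$ as small as we wish, on which $f$ is continuous, $\beta$ and each $g_{i}$ are bounded, and $\boldsymbol{\xi}\mapsto f(x,\boldsymbol{\xi})$ is vectorially ext. quasiconvex for every $x\in C_{\varepsilon}$. The lower bound in $\left(\mathcal{C}^{x}_{\boldsymbol{p}}\right)$ gives $\int_{\Omega\setminus C_{\varepsilon}}f(x,\boldsymbol{d\omega^{\nu}})\ge-\int_{\Omega\setminus C_{\varepsilon}}\bigl(\beta(x)+\sum_{i}G^{l}_{i}(d\omega^{\nu}_{i})\bigr)$, and since $\beta\in L^{1}$, since $\lbrace\lvert d\omega^{\nu}_{i}\rvert^{q_{i}}\rbrace$ (resp.\ $\lbrace\lvert d\omega^{\nu}_{i}\rvert\rbrace$ if $p_{i}=1$) is equiintegrable with $q_{i}<p_{i}$, and since the $p_{i}=\infty$ contributions are uniformly bounded, this is $\ge-\varepsilon$ uniformly in $\nu$ once $\operatorname*{meas}(\Omega\setminus C_{\varepsilon})$ is small; the same estimate holds with $\boldsymbol{d\omega}$ in place of $\boldsymbol{d\omega^{\nu}}$, and moreover $\int_{\Omega\setminus C_{\varepsilon}}f(x,\boldsymbol{d\omega})\to0$ as $\varepsilon\to0$. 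It therefore suffices to prove $\liminf_{\nu}\int_{C_{\varepsilon}}f(x,\boldsymbol{d\omega^{\nu}})\ge\int_{C_{\varepsilon}}f(x,\boldsymbol{d\omega})$ for each $\varepsilon$ and then let $\varepsilon\to0$ (if $\int_{\Omega}f(x,\boldsymbol{d\omega})=+\infty$ one concludes directly, since taking $C_{\varepsilon}$ increasing forces $L=+\infty$ by monotone convergence).

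To estimate the integral over $C_{\varepsilon}$, I would cover $\Omega$ by finitely many disjoint cubes $D_{s}$ with sides parallel to the axes and edge length $1/h$, and call $D_{s}$ \emph{good} if $\operatorname*{meas}(D_{s}\cap C_{\varepsilon})>0$; the bad cubes have total measure tending to $0$ with $1/h$, so their contribution is again controlled by the growth-and-equiintegrability estimate above. On each good cube pick $x_{s}\in D_{s}\cap C_{\varepsilon}$ and set $g_{s}:=f(x_{s},\cdot)$, which is vectorially ext. quasiconvex and, because $\beta$ and the $g_{i}$ are bounded on $C_{\varepsilon}$, satisfies the growth condition $\left(\mathcal{C}_{\boldsymbol{p}}\right)$ with constants independent of $s$. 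Using the uniform continuity of $f$ on $C_{\varepsilon}\times K$ for $K\subset\boldsymbol{\Lambda^{k}}$ compact, together with equiintegrability of $\lbrace\lvert d\omega^{\nu}_{i}\rvert^{p_{i}}\rbrace$ to discard the set where $\boldsymbol{d\omega^{\nu}}$ is large, one replaces $\int_{D_{s}\cap C_{\varepsilon}}f(x,\boldsymbol{d\omega^{\nu}})$ by $\int_{D_{s}}g_{s}(\boldsymbol{d\omega^{\nu}})$ up to an error that can be made as small as we please, uniformly in $\nu$, by choosing $h$ large and $\operatorname*{meas}(\Omega\setminus C_{\varepsilon})$ small; Theorem~\ref{semicontinuity without x} applied to $g_{s}$ on the cube $D_{s}$ then yields $\liminf_{\nu}\int_{D_{s}}g_{s}(\boldsymbol{d\omega^{\nu}})\ge\int_{D_{s}}g_{s}(\boldsymbol{d\omega})$, and summing over the good cubes and undoing the freezing (now using uniform continuity on $C_{\varepsilon}$ and $\boldsymbol{d\omega}\in L^{\boldsymbol{p}}$) gives $\liminf_{\nu}\int_{C_{\varepsilon}}f(x,\boldsymbol{d\omega^{\nu}})\ge\int_{C_{\varepsilon}}f(x,\boldsymbol{d\omega})-C\varepsilon$.

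The main obstacle I anticipate is propagating the hypotheses attached to indices with $p_{i}=1$ and $k_{i}\ne1$ through the localization to the cubes $D_{s}$: the strong $L^{1}$-convergence alternative restricts harmlessly, but the trace alternative $\omega^{\nu}_{i}-\omega_{i}\in W^{d,1}_{T}(\Omega;\Lambda^{k_{i}-1})$ does not obviously pass to the subcubes, so the application of Theorem~\ref{semicontinuity without x} on each $D_{s}$ must accommodate this factor in the same manner as in the proof of that theorem. A secondary nuisance is the measurable bookkeeping in the Scorza--Dragoni/Lusin step, which must arrange simultaneously the continuity of $f$, the boundedness of $\beta$ and all the $g_{i}$, and the vectorial ext. quasiconvexity of $f(x,\cdot)$ on one and the same compact set; once this is in place the remaining estimates are routine adaptations of the classical arguments (cf.\ Theorems~8.8 and~8.11 in \cite{DCV2}).
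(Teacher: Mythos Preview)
Your proposal is correct and follows essentially the same route as the paper: reduce to equiintegrable $\lbrace\lvert d\omega^{\nu}_{i}\rvert^{p_{i}}\rbrace$ via Lemma~\ref{decomposition lemma}, use Scorza--Dragoni to freeze $x$, and invoke Theorem~\ref{semicontinuity without x} on subcubes. The only cosmetic difference is that the paper first normalizes the growth condition (replacing the $g_{i}(x)$ by constants through the truncation $f_{\mu}=\phi^{\mu}f$ and absorbing the lower bound via $f_{\varepsilon}$), whereas you bound $\beta$ and the $g_{i}$ on the Scorza--Dragoni/Lusin set directly; the obstacle you flag about propagating the $p_{i}=1$, $k_{i}\neq 1$ trace alternative to the subcubes is genuine, and the paper does not resolve it more explicitly than you do---it is inherited verbatim from the end of the proof of Theorem~\ref{semicontinuity without x}.
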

\begin{proof}
The argument works in two stages. First we show that to prove the theorem,
\begin{itemize}
 \item[(A1)] We can assume $f$ satisfies a slightly more restrictive growth condition, namely, 
for almost every $x \in \Omega$ and for every 
 $\boldsymbol{\xi} \in \boldsymbol{\Lambda^{k}},$ 
 \begin{equation}\label{simpler growth}
 -\sum_{ \substack{ i\\p_{i} =1}} \alpha_{i}\lvert \xi_{i}\rvert   \leq f(x, \boldsymbol{\xi}) \leq \beta(x) +  \sum_{i=1}^{r} \alpha_{i}\lvert \xi_{i} \rvert^{p_i} + \sum_{i=r+1}^{m} \eta_{i}\left( \lvert \xi_{i} \rvert \right) 
  ,\tag{ $\mathcal{C}^{x'}_{\boldsymbol{p}} $}
 \end{equation}
for some nonnegative $\beta \in L^{1}(\Omega)$, where  $ \alpha_i \geq 0$ for all $1 \leq i \leq r$ are constants and  $\eta_i$s are  some nonnegative, continuous, 
increasing function for each $r+1 \leq i \leq m.$
\item[(A2)] We can restrict our attention to sequences $\boldsymbol{\omega^{\nu}} \rightharpoonup \boldsymbol{\omega} \text{ in } W^{d,\boldsymbol{p}}\left(\Omega;\boldsymbol{\Lambda^{k-1}}\right)$ 
with the property that $\lbrace \lvert d\omega^{\nu}_{i} \rvert^{p_{i}} \rbrace$ is equiintegrable for all $1 \leq i \leq r.$
\item[(A3)] We can assume $\Omega \subset \mathbb{R}^{n}$ is an open cube with sides parallel to 
axes. 
\end{itemize}
To show (A1), first note that for a sequence $\boldsymbol{\omega^{\nu}} \rightharpoonup \boldsymbol{\omega} \text{ in } W^{d,\boldsymbol{p}}\left(\Omega;\boldsymbol{\Lambda^{k-1}}\right)$, 
there exist constants $\gamma_{i} > 0$ such that $\lVert d\omega^{\nu}_{i} \rVert_{L^{\infty}} \leq \gamma_{i} \text { for every } r+1 \leq i \leq m.$  Also, if $1 \leq q_{i} < p_{i},$ then for every $\varepsilon > 0,$ there exists 
a constant $k_{i} = k_{i}(\varepsilon) > 0$ such that 
 $ \varepsilon \lvert \xi_{i}\rvert^{p_{i}} + k_{i} \leq \alpha_{i}\lvert \xi_{i}\rvert^{q_{i}} \text{ for all } \xi_{i} \in \Lambda^{k_{i}}.$ 
 Set $\displaystyle k := \sum_{\substack{i\\ 1 <  p_{i} < \infty}} k_{i} + \sum\limits_{i =r+1}^{m} \eta_{i} \left(\gamma_{i} \right).$ and define 
  \begin{align*}
  f_{\varepsilon} (x, \boldsymbol{\xi}) = f(x, \boldsymbol{\xi}) + \beta (x) + \varepsilon \sum_{\substack{i\\ 1 < 
  p_{i} < \infty}} \lvert \xi \rvert^{p_{i}} + k.
 \end{align*}
It is easy to see that if $f$ satisfies $\left( \mathcal{C}^{x}_{\boldsymbol{p}}\right)$, then $f_{\varepsilon}$ satisfies, 
\begin{equation}\label{intermediate growth}
 -\sum_{ \substack{ i\\p_{i} =1}} \alpha_{i}\lvert \xi_{i}\rvert   \leq f(x, \boldsymbol{\xi}) \leq \beta(x) +  \sum_{\substack{i \\ p_{i} =1}} \alpha_{i}\lvert \xi_{i} \rvert 
 + \sum_{\substack{i \\ 1 < p_{i} < \infty }} g_{i}(x)\lvert \xi_{i} \rvert + \sum_{i=r+1}^{m} \eta_{i}\left( \lvert \xi_{i} \rvert \right) 
  .\tag{ $\mathcal{C}^{x''}_{\boldsymbol{p}} $}
 \end{equation}
 $f_{\varepsilon}$ is clearly vectorially ext. quasiconvex and letting $\varepsilon \rightarrow 0,$ we can deduce the semicontinuity result for $f$, along the sequence  
 $\boldsymbol{\omega^{\nu}}$, from the one for  $f_{\varepsilon}.$ This shows that we can replace the conditions $\left( \mathcal{C}^{x}_{\boldsymbol{p}}\right)$ by 
 $(\mathcal{C}^{x''}_{\boldsymbol{p}} )$. To prove (A1), it only remains to show that we can replace the functions $g_{i}(x)$ with constants. We define, for 
 every natural number $\mu,$
\begin{align*}
 \phi^{\mu} (x) := \left\lbrace \begin{aligned}
                                &1  &&\text{ if } \max\limits_{\substack{i \\ 1 < p_{i} < \infty}} g_{i}(x) \leq \mu \\ 
                                &\frac{\mu}{\max\limits_{\substack{i \\ 1 < p_{i} < \infty}}\left[ g_{i}(x)\right] } &&\text{ if  otherwise }.
                               \end{aligned}\right. 
\end{align*}
Setting $ f_{\mu}(x, \boldsymbol{\xi}) := \phi^{\mu} (x) f(x, \boldsymbol{\xi}) ,$ we see that $f_{\mu}$ satisfies $\mathcal{C}^{x'}_{\boldsymbol{p}}$ for every $\mu$ and 
$ f(x, \boldsymbol{\xi})  = \sup_{\mu} f_{\mu}(x, \boldsymbol{\xi}) = \lim_{\mu \rightarrow \infty} f_{\mu}(x, \boldsymbol{\xi}). $ Thus, semicontinuity result for $f$ follows 
from that of $f_{\mu}.$ This proves (A1). Proceeding as in Step 1 of the proof of Theorem \mbox{\ref{semicontinuity without x}} above, we prove (A2). (A3) is shown by approximating 
$\Omega$ from the inside by a finite union of disjoint open cubes with sides parallel to axes, up to a set of small measure and using equiintegrability.\smallskip 

Next we show the theorem under the additional assumptions (A1),(A2),(A3). The strategy is standard. We freeze the points and then use Theorem 
\ref{semicontinuity without x}.
\smallskip

For any given $\varepsilon >0,$ for every $1 \leq i \leq r,$ there exist constants $M^{i}_{\varepsilon} \geq 1$, 
independent of $\nu$, such that the sets $K^{i}_{\varepsilon,\nu} := \left\lbrace  x \in \Omega: \lvert d\omega^{\nu}_{i} \rvert^{p_{i}} \text{ or } 
\lvert d\omega_{i} \rvert^{p_{i}} > M^{i}_{\varepsilon} \right\rbrace, $ satisfy $\operatorname*{meas} \left( K^{i}_{\varepsilon,\nu} \right) < \frac{\varepsilon}{r}, $ 
for every $\nu.$ We set $\Omega_{\varepsilon}:= \Omega \setminus \bigcup\limits_{i=1}^{r}K^{i}_{\varepsilon,\nu}.$ Also, 
for every $ r+1 \leq i \leq m,$ i.e there exist constants $\gamma_{i} > 0$ such that
$ \lVert d\omega^{\nu}_{i} \rVert_{L^{\infty}} \leq \gamma_{i}$ for all $\nu. $ We define $\displaystyle  k:= \sum_{i=r+1}^{m} \eta_{i}(\gamma_{i}) $ and since 
$\beta \in L^{1}(\Omega)$ and nonnegative, given any $\varepsilon > 0,$ we can find $M^{\beta}_{\varepsilon} \leq 1$ such that 
$ \operatorname*{meas} (\Omega\setminus E_{\varepsilon}) \leq \frac{\varepsilon}{k}$ and $\int_{\Omega\setminus E_{\varepsilon}} \beta(x)dx < \varepsilon, $ where 
$ E_{\varepsilon} := \lbrace x \in \Omega : \beta(x) \leq M^{\beta}_{\varepsilon} \rbrace .$ Now by the Scorza-Dragoni theorem (cf. theorem 3.8 in \mbox{\cite{DCV2}}), 
we find a compact set $K_{\varepsilon} \subset \Omega_{\varepsilon}$ with 
$ \operatorname*{meas} (\Omega_{\varepsilon}\setminus K_{\varepsilon}) < \varepsilon $ such that 
$f: K_{\varepsilon} \times S_{\varepsilon} \rightarrow \mathbb{R}$ is continuous, where 
$$  S_{\varepsilon}: = \lbrace \boldsymbol{\xi} \in \boldsymbol{\Lambda^{k}} : \lvert \xi\rvert^{p_{i}} \leq M^{i}_{\varepsilon} \text{ for all } 1 \leq i \leq r,\ 
\lvert \xi\rvert \leq \gamma_{i} \text{ for all } r+1 \leq i \leq m\rbrace .$$
Now we subdivide $\Omega$ into a finite union of cubes $D_{s}$ of side length $\frac{1}{h}$ such that 
$ \operatorname*{meas} \left(\Omega \setminus \bigcup\limits_{s} D_{s}\right) = 0.$ Fix $x_{s} \in D_{s}$ for all $s.$ Now using the uniform continuity of $f$ on the sets
$E_{\varepsilon}\cap K_{\varepsilon}\cap D_{s},$ the lower bound 
and the upper bound, respectively, in (A1) and choosing $h$ large enough,  we can find the estimates 
\begin{align*}
  \int_{\Omega}f\left(x,\boldsymbol{d \omega^{\nu}}\right) &\geq \sum\limits_{s} \int_{D_{s}}f\left(x_{s},\boldsymbol{d \omega^{\nu}}\right)
  - R_{1}\left(\varepsilon \right) ,\\
  \sum\limits_{s} \int_{D_{s}}f\left(x_{s},\boldsymbol{d \omega}\right) &\geq \int_{\Omega}f\left(x,\boldsymbol{d \omega}\right) 
- R_{2}\left(\varepsilon \right),
\end{align*}
where $R_{1}\left(\varepsilon \right),R_{2}\left(\varepsilon \right) \rightarrow 0$ as $\varepsilon \rightarrow 0.$ In view of theorem \mbox{\ref{semicontinuity without x}}, this 
concludes the proof.
\end{proof}

As was pointed out to the author by Kristensen (private communication), it is also possible to give a different proof of both theorem 
\ref{semicontinuity without x} and theorem \ref{semicontinuity with x dependence}, utilizing the blow-up argument of Fonseca-M\"{u}ller \cite{MullerFonseca}.
\subsection{Failure of semicontinuity in $W^{d,p}$ for general functional}
 Vectorial ext. quasiconvexity of the map $\boldsymbol{\xi} \mapsto f(x, \boldsymbol{\omega} , \boldsymbol{\xi})$, along with usual growth conditions,  
 is not sufficient for weak lower semicontinuity in 
 $W^{d,\boldsymbol{p}}$ of functionals 
 with explicit dependence on $\boldsymbol{\omega},$ i.e for functionals of the form,
 $$\int_{\Omega}f\left(x,\boldsymbol{\omega}, \boldsymbol{d \omega}\right)\,dx .$$
For example, even when $m=1$, for $ k \geq 2$, we have the following.
\begin{proposition}[Counterexample to semicontinuity]\label{failure of semicontinuity}
Let  $n \geq 2.$ Also let $2 \leq k \leq n$, $1 \leq  p < \infty $ and let $\Omega \subset\mathbb{R}^n.$ Let
$$ I(\omega):=\frac{1}{p}\int_{\Omega}\lvert d\omega \rvert^{p} -\frac{1}{p}\int_{\Omega} \lvert \omega \rvert^{p},\text{ for all }\omega\in 
W^{d,p}\left(\Omega;\Lambda^{k-1}\right). $$
Then $I$ is not weakly lower semicontinuous in $W^{d,p}\left(\Omega;\Lambda^{k-1}\right).$
\end{proposition}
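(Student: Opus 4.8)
The plan is to turn the heuristic flagged in the introduction — that the failure reflects the absence of a Sobolev/compactness inequality on $W^{d,p}$ — into an explicit counterexample. Write $I(\omega)=\frac1p\int_\Omega|d\omega|^p-\frac1p\int_\Omega|\omega|^p$. The first term is convex and strongly continuous in $d\omega\in L^p$, hence weakly lower semicontinuous on $W^{d,p}$; so if $I$ were weakly lower semicontinuous, then $\omega\mapsto\int_\Omega|\omega|^p$ would be weakly \emph{upper} (hence, being also weakly lower) semicontinuous, i.e.\ weakly continuous on $W^{d,p}$. In $W^{1,p}$ this is exactly Rellich--Kondrachov, but that compactness is unavailable for $W^{d,p}$ with $k\ge2$, because closed forms may oscillate arbitrarily fast while staying bounded in $L^p$. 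So I would just exhibit such an oscillating family of closed forms.

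Concretely, using $2\le k\le n$, set for $\nu\in\mathbb N$
\[
\omega^{\nu}:=\sin(\nu x^{1})\,dx^{1}\wedge dx^{2}\wedge\cdots\wedge dx^{k-1}\in W^{d,p}(\Omega;\Lambda^{k-1}).
\]
Since $dx^{1}\wedge dx^{1}=0$ one gets $d\omega^{\nu}=\nu\cos(\nu x^{1})\,dx^{1}\wedge dx^{1}\wedge\cdots\wedge dx^{k-1}=0$; in fact $\omega^{\nu}=d\bigl(-\tfrac1\nu\cos(\nu x^{1})\,dx^{2}\wedge\cdots\wedge dx^{k-1}\bigr)$ is exact. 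Thus $\{\omega^{\nu}\}$ is bounded in $L^{\infty}(\Omega;\Lambda^{k-1})$, a fortiori in $W^{d,p}$, with $d\omega^{\nu}\equiv0$, so $\omega^{\nu}\rightharpoonup0$ in $W^{d,p}(\Omega;\Lambda^{k-1})$ reduces to $\sin(\nu x^{1})\rightharpoonup0$ in $L^{p}(\Omega)$. For $1<p<\infty$ this is the standard oscillation argument: test against $C_{c}^{\infty}(\Omega)$, dense in $L^{p'}(\Omega)$, using the uniform $L^{p'}$-dual bound. For $p=1$ one uses that the sequence, bounded in $L^{\infty}$ on the bounded set $\Omega$, is equiintegrable: given $g\in L^{\infty}(\Omega)$, split $g=g_{1}+g_{2}$ with $g_{1}\in C_{c}^{\infty}(\Omega)$ and $\|g_{2}\|_{L^{1}}$ small, so $\int_{\Omega}\sin(\nu x^{1})g_{1}\to0$ by Riemann--Lebesgue and $\bigl|\int_{\Omega}\sin(\nu x^{1})g_{2}\bigr|\le\|g_{2}\|_{L^{1}}$.

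On the other hand $|\sin(\nu x^{1})|^{p}$ is bounded in $L^{\infty}(\Omega)$ and converges weak-$\ast$ to its mean $c_{0}:=\tfrac1{2\pi}\int_{0}^{2\pi}|\sin t|^{p}\,dt\in(0,1)$, so $\int_{\Omega}|\omega^{\nu}|^{p}=\int_{\Omega}|\sin(\nu x^{1})|^{p}\,dx\to c_{0}\lvert\Omega\rvert>0$. Hence
\[
I(\omega^{\nu})=\frac1p\int_{\Omega}|d\omega^{\nu}|^{p}-\frac1p\int_{\Omega}|\omega^{\nu}|^{p}=-\frac1p\int_{\Omega}|\sin(\nu x^{1})|^{p}\,dx\ \longrightarrow\ -\frac{c_{0}\lvert\Omega\rvert}{p}<0=I(0),
\]
while $\omega^{\nu}\rightharpoonup0$ in $W^{d,p}$; since $\liminf_{\nu}I(\omega^{\nu})<I(0)$ with $\omega^{\nu}\rightharpoonup0$, the functional $I$ is not weakly lower semicontinuous on $W^{d,p}(\Omega;\Lambda^{k-1})$.

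No step is genuinely hard once the right test sequence — closed, oscillating, built from a single coordinate direction — is identified; the only technical point worth writing out is the weak convergence $\sin(\nu x^{1})\rightharpoonup0$ in the non-reflexive case $p=1$, where one leans on equiintegrability rather than mere boundedness, and, conceptually, the observation that this is precisely the missing-Sobolev-inequality phenomenon highlighted in the introduction.
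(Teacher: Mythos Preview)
Your proof is correct and rests on the same idea as the paper's: since $k\ge 2$, one can find closed (indeed exact) $(k-1)$-forms that converge weakly but not strongly in $L^{p}$, and on such sequences the functional reduces to $-\tfrac{1}{p}\int_{\Omega}|\omega|^{p}$, which is not weakly lower semicontinuous. The paper, however, argues abstractly by contradiction: it takes \emph{any} sequence of exact forms $d\theta_{\nu}\rightharpoonup d\theta$ in $L^{p}$ with $d\theta_{\nu}\not\to d\theta$, and shows that weak lower semicontinuity of $I$ would force $\|d\theta_{\nu}\|_{L^{p}}\to\|d\theta\|_{L^{p}}$, hence strong convergence, a contradiction. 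Your route is more concrete---you write down the explicit oscillating sequence $\sin(\nu x^{1})\,dx^{1}\wedge\cdots\wedge dx^{k-1}$ and compute the limit directly---which has the advantage of being self-contained and of making the link to the ``missing Sobolev inequality'' transparent; the paper's argument is shorter and works uniformly without computing the constant $c_{0}$ or handling the $p=1$ weak convergence separately. One small caveat: your computation of $\int_{\Omega}|\sin(\nu x^{1})|^{p}\to c_{0}|\Omega|$ and the $L^{1}$ argument tacitly assume $\Omega$ is bounded, which is the standing hypothesis throughout the paper but is not restated in the proposition.
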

\begin{proof}
Consider a sequence of exact forms $\lbrace d\theta_{\nu} \rbrace \subset L^{p}\left(\Omega;\Lambda^{k-1}\right)$ such that 
$$d\theta_{\nu} \rightharpoonup d\theta \text{ in } L^{p}\left(\Omega;\Lambda^{k-1}\right) \text{ but } 
d\theta_{\nu} \not\rightarrow d\theta \text{ in } L^{p}\left(\Omega;\Lambda^{k-1}\right),$$
for some $d\theta \in L^{p}\left(\Omega;\Lambda^{k-1}\right)$. Note that finding such a sequence is impossible if $k=1$ and always possible for $2 \leq k \leq n.$
But, then we have, 
\begin{align*}
 \liminf_{\nu \rightarrow \infty} I(d\theta_{\nu}) &= \liminf_{\nu \rightarrow \infty} \left(  -\frac{1}{p}\int_{\Omega} \lvert d\theta_{\nu} \rvert^{p} \right)   
= - \frac{1}{p} \limsup_{\nu \rightarrow \infty}   \int_{\Omega} \lvert d\theta_{\nu} \rvert^{p} \\ 
 &\leq  - \frac{1}{p} \liminf_{\nu \rightarrow \infty}   \int_{\Omega} \lvert d\theta_{\nu} \rvert^{p} \leq -\frac{1}{p}\int_{\Omega} \lvert d\theta \rvert^{p} = I(d\theta).
\end{align*}
But if $I$ is weakly lower semicontinuous, this implies $ \displaystyle \liminf_{\nu \rightarrow \infty} I(d\theta_{\nu}) =  I(d\theta).$
But this is impossible since that would imply,
$$ \limsup_{\nu \rightarrow \infty}   \lVert d\theta_{\nu} \rVert^{p}_{L^{p}} = \liminf_{\nu \rightarrow \infty}   \lVert d\theta_{\nu} \rVert^{p}_{L^{p}} 
= \lim_{\nu \rightarrow \infty}   \lVert d\theta_{\nu} \rVert^{p}_{L^{p}} = \lVert d\theta \rVert^{p}_{L^{p}}. $$
Since $d\theta_{\nu} \rightharpoonup d\theta$ in $L^{p}$, this implies the strong convergence in $L^{p}$, which contradicts the fact that 
$d\theta_{\nu} \not\rightarrow d\theta \text{ in } L^{p}\left(\Omega;\Lambda^{k-1}\right).$
\end{proof}\smallskip 

\noindent However, if $k_{i} = 1$ for all $1 \leq i \leq m,$ the functional $\int_{\Omega}f\left(x,\boldsymbol{\omega}, \boldsymbol{d \omega}\right)\,dx $ is weakly lower 
semicontinuous in $W^{d,\boldsymbol{p}},$ precisely because in this case $W^{d,\boldsymbol{p}}$ and $W^{1,\boldsymbol{p}}$ are the same space. 
Indeed, it is possible to show the more general result that the functional $\int_{\Omega}f\left(x,\boldsymbol{\omega}, \boldsymbol{d \omega}(x)\right)\,dx $ is always weakly lower 
semicontinuous in $W^{1,\boldsymbol{p}}$ with appropriate growth conditions on $f.$ 

\subsection{Semicontinuity in $W^{1,\boldsymbol{p}}$ for general functional}
We first define the appropriate growth conditions in this setting.
\begin{definition}[Growth condition III]
 Let $\Omega\subset\mathbb{R}^n$ be open, bounded.
  Let $f:\Omega\times\boldsymbol{\Lambda^{k-1}}\times\boldsymbol{\Lambda^k}\rightarrow \mathbb{R}$ be a Carath\'eodory function. \smallskip
  
  \noindent $f$ is said to be of growth $\left(\mathcal{C}^{x,u}_{\boldsymbol{p}}\right)$, if , for almost every $x \in \Omega$ and
  for every $( \boldsymbol{u}, \boldsymbol{\xi} )  \in \boldsymbol{\Lambda^{k-1}}\times \boldsymbol{\Lambda^{k}}$,  $f$ satisfies, 
  \begin{align*}
   -\beta(x) - \sum_{i=1}^{m}  G^{l}_{i}( u_{i}, \xi_{i} )  \leq f(x, \boldsymbol{u},\boldsymbol{\xi}) \leq \beta(x) +  \sum_{i=1}^{m} G^{u}_{i}( u_{i}, \xi_i) ,\tag{ $\mathcal{C}^{x,u}_{\boldsymbol{p}} $}
  \end{align*}
where $\beta \in L^{1}(\Omega)$ is nonnegative and the functions $G^{l}_{i}$s in the lower bound and the functions $G^{u}_{i}$s in the upper bound has the following form:
\begin{itemize}
 \item If $p_i =1$, then,
                   \begin{align*}
                    G^{l}_{i}(u_{i}, \xi_i) = G^{u}_{i}(u_{i}, \xi_i) = \alpha_{i} \lvert \xi_{i} \rvert \qquad \qquad \text{ for some constant } \alpha_i \geq 0. 
                    \end{align*}
\item If $1 < p_i < \infty$, then,
\begin{align*}
 G^{l}_{i}(u_{i}, \xi_i) = \alpha_{i}\left( \lvert \xi_{i} \rvert^{q_i} + \lvert u_{i} \rvert^{r_i}\right)  \quad \text{ and } \quad 
 G^{u}_{i}(u_{i},\xi_i) = g_{i}(x, u_{i})\lvert \xi_{i} \rvert^{p_i} , 
\end{align*}
for some $ 1 \leq q_i < p_i ,$ $ 1 \leq r_{i} < np_{i}/(n -p_{i})$ if $p_{i} < n$ and $1 \leq r_{i} < \infty$ if $p_{i} \geq n$, $g_{i}$ is a nonnegative Carath\'eodory function                     
and for some constant $ \alpha_i \geq 0.$ 
\item If $p_i = \infty$, then, 
\begin{align*}
 G^{l}_{i}(u_{i}, \xi_i) = G^{u}_{i}(u_{i}, \xi_i) = \eta_{i}\left( \lvert u_{i} \rvert, \lvert \xi_{i} \rvert \right)  . 
\end{align*}
for some nonnegative, continuous, increasing (in each argument) function  $\eta_i$. 
\end{itemize}
\end{definition}

\noindent With these growth conditions on $f$, it is possible to show that the functional $\int_{\Omega}f\left(x,\boldsymbol{\omega}, \boldsymbol{d \omega}(x)\right)\,dx $ 
is always weakly lower semicontinuous in $W^{1,\boldsymbol{p}}.$ The proof is very similar to the proof of Theorem \ref{semicontinuity with x dependence}. In this case too, it is possible to derive all 
the necessary estimates after freezing both $x$ and $\boldsymbol{\omega}.$ 
Some modifications are required to handle the explicit dependence 
on $\boldsymbol{\omega},$ but these modifications essentially use the Sobolev embedding and is quite standard 
(see theorem 8.8 and theorem 8.11 in \cite{DCV2} for the classical case).  
We state the theorem below and omit the proof.

\begin{theorem}\label{semicontinuity in W1p for general functional}
  Let $\Omega\subset\mathbb{R}^n$ be open, bounded, smooth. 
 Let $f:\Omega\times\boldsymbol{\Lambda^{k-1}}\times\boldsymbol{\Lambda^k}\rightarrow \mathbb{R}$ be a Carath\'eodory function, satisfying the growth condition 
 $\left( \mathcal{C}^{x,u}_{\boldsymbol{p}}\right)$ 
 and $\boldsymbol{\xi} \mapsto f(x, \boldsymbol{u}, \boldsymbol{\xi})$ is vectorially ext. quasiconvex for a.e $x \in \Omega$ and for 
 every $\boldsymbol{u} \in \boldsymbol{\Lambda^{k-1}}.$ 
 Let $I:W^{1,\boldsymbol{p}}\left(\Omega;\boldsymbol{\Lambda^{k-1}}
 \right)\rightarrow \mathbb{R}$ defined by
$$
I(\boldsymbol{\omega}):=\int_{\Omega}f\left(x,\boldsymbol{\omega}, \boldsymbol{d \omega}\right)\,dx,\text{ for all }\boldsymbol{\omega}\in W^{1,\boldsymbol{p}}\left(\Omega;\boldsymbol{
\Lambda^{k-1}}\right).
$$
Then $I$ is  weakly lower semicontinuous in $W^{1,\boldsymbol{p}}\left(\Omega;\boldsymbol{\Lambda^{k-1}}\right)$ (weakly $\ast$ in $i$-th factor if $p_{i} = \infty$).  
\end{theorem}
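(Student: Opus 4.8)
The plan is to follow the architecture of the proof of Theorem \ref{semicontinuity with x dependence}, the only genuinely new feature being the elimination of the explicit dependence on $\boldsymbol{\omega}$; this is precisely the place where one uses that we are in $W^{1,\boldsymbol{p}}$ and not in $W^{d,\boldsymbol{p}}$, namely through the compactness of the Sobolev embedding (compare Proposition \ref{failure of semicontinuity}, which shows the statement is false in $W^{d,\boldsymbol{p}}$).

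\emph{Reductions.} First I would carry out the analogues of the reductions (A1)--(A3) from the proof of Theorem \ref{semicontinuity with x dependence}, now in the presence of the variable $\boldsymbol{u}$. Along any sequence $\boldsymbol{\omega}^{\nu} \rightharpoonup \boldsymbol{\omega}$ in $W^{1,\boldsymbol{p}}$ the components with $p_{i}=\infty$ are uniformly bounded in $W^{1,\infty}$, while for $1<p_{i}<\infty$ one exploits $q_{i}<p_{i}$ and the strict subcriticality of $r_{i}$ to add to $f$ a term $\beta(x)+\varepsilon\sum_{\substack{i\\ 1<p_{i}<\infty}}\lvert\xi_{i}\rvert^{p_{i}}+\mathrm{const}$ and let $\varepsilon\to 0$, reducing the lower bound to the $p_{i}=1$ terms alone. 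The Carath\'eodory weights $g_{i}(x,u_{i})$ in the upper bound are then handled by a cut-off $\phi^{\mu}$ of the type used for (A1), realising $f$ as an increasing limit of functions $f_{\mu}$ of growth type $(\mathcal{C}^{x'}_{\boldsymbol{p}})$ with the $g_{i}$ replaced by bounded functions. Equiintegrability of $\lbrace\lvert d\omega^{\nu}_{i}\rvert^{p_{i}}\rbrace$ for $1\le i\le r$ (the analogue of (A2)) follows from Lemma \ref{decomposition lemma} as in Step 1 of the proof of Theorem \ref{semicontinuity without x}, and one may assume $\Omega$ is an open cube with sides parallel to the axes (the analogue of (A3)) by exhausting it from inside by finitely many disjoint such cubes up to a set of small measure.

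\emph{Freezing and conclusion.} Passing to a subsequence realising the $\liminf$, the compact Sobolev embedding (Rellich-Kondrachov for $1\le p_{i}<\infty$, and the compactness of $W^{1,q}\hookrightarrow L^{\infty}$ for a finite $q>n$ when $p_{i}=\infty$) gives, after a further subsequence, $\omega^{\nu}_{i}\to\omega_{i}$ strongly in $L^{r_{i}}$ and a.e.\ in $\Omega$ for every $1\le i\le m$. Applying the Scorza-Dragoni theorem (cf.\ Theorem 3.8 in \cite{DCV2}) together with the Egorov-type control of $\beta$ and of the superlevel sets of $\lvert d\omega^{\nu}_{i}\rvert$ and $\lvert\omega^{\nu}_{i}\rvert$, exactly as in the proof of Theorem \ref{semicontinuity with x dependence}, yields a compact set on which $f$ is uniformly continuous; combined with the boundedness of the truncated weights and the equiintegrability of $\lbrace\lvert d\omega^{\nu}_{i}\rvert^{p_{i}}\rbrace$, the strong convergence $\omega^{\nu}_{i}\to\omega_{i}$ in $L^{r_{i}}$ lets me replace $f(x,\boldsymbol{\omega}^{\nu}(x),\boldsymbol{d\omega}^{\nu}(x))$ by $f(x,\boldsymbol{\omega}(x),\boldsymbol{d\omega}^{\nu}(x))$ in the integral up to an error tending to $0$. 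The function $\widetilde{f}(x,\boldsymbol{\xi}):=f(x,\boldsymbol{\omega}(x),\boldsymbol{\xi})$ is then Carath\'eodory, vectorially ext.\ quasiconvex in $\boldsymbol{\xi}$ for a.e.\ $x$, and of growth type $(\mathcal{C}^{x}_{\boldsymbol{p}})$, so Theorem \ref{semicontinuity with x dependence} applied to $\widetilde f$ and the same sequence $\boldsymbol{\omega}^{\nu}$ — whose hypotheses hold, including the proviso on $\omega^{\nu}_{i}$ when $p_{i}=1$ and $k_{i}\neq 1$, which is part of the assumption — gives $\liminf_{\nu}\int_{\Omega}\widetilde f(x,\boldsymbol{d\omega}^{\nu})\ge\int_{\Omega}\widetilde f(x,\boldsymbol{d\omega})=\int_{\Omega}f(x,\boldsymbol{\omega},\boldsymbol{d\omega})$. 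Undoing the reductions by monotone convergence in $\mu$ and $\varepsilon$ completes the proof; the weak $\ast$ statement in the factors with $p_{i}=\infty$ is built into the above.

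\emph{Main obstacle.} The decisive point is making the replacement of $\boldsymbol{\omega}^{\nu}$ by its weak limit quantitative, i.e.\ estimating the modulus of continuity of $f$ in the $\boldsymbol{u}$-slot against the term $g_{i}(x,u_{i})\lvert\xi_{i}\rvert^{p_{i}}$ of the upper bound; this requires simultaneously the strict subcriticality of the $r_{i}$ (so that $\omega^{\nu}_{i}\to\omega_{i}$ strongly in $L^{r_{i}}$), the boundedness of the $g_{i}$ secured by the truncation in the reductions step, and the equiintegrability of $\lbrace\lvert d\omega^{\nu}_{i}\rvert^{p_{i}}\rbrace$. Everything else is a faithful transcription of the proof of Theorem \ref{semicontinuity with x dependence}.
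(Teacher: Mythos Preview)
Your proposal is correct and follows essentially the approach the paper itself sketches (the paper omits the detailed proof, saying only that one proceeds as in Theorem \ref{semicontinuity with x dependence}, ``freezing both $x$ and $\boldsymbol{\omega}$'' and using the Sobolev embedding to handle the $\boldsymbol{\omega}$-dependence, as in Theorems 8.8 and 8.11 of \cite{DCV2}). The key ingredients you identify---the compact Sobolev embedding to upgrade weak to strong convergence of $\boldsymbol{\omega}^{\nu}$, the reductions (A1)--(A3), Scorza--Dragoni, and equiintegrability of $\lvert d\omega^{\nu}_{i}\rvert^{p_{i}}$---are exactly the right ones.

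There is one small slip: when you write that the proviso on $\omega^{\nu}_{i}$ in the case $p_{i}=1$, $k_{i}\neq 1$ ``is part of the assumption,'' this is not quite right---Theorem \ref{semicontinuity in W1p for general functional} carries no such hypothesis. Rather, it is a \emph{consequence} of working in $W^{1,1}$ instead of $W^{d,1}$: the compact embedding $W^{1,1}\hookrightarrow L^{1}$ (which you already invoked) gives $\omega^{\nu}_{i}\to\omega_{i}$ in $L^{1}$ automatically, so the ``either'' clause in Theorem \ref{semicontinuity with x dependence} is satisfied for free. This is precisely the mechanism that distinguishes $W^{1,\boldsymbol{p}}$ from $W^{d,\boldsymbol{p}}$ here and makes Proposition \ref{failure of semicontinuity} inapplicable.

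A minor stylistic remark: the paper's phrase ``freezing both $x$ and $\boldsymbol{\omega}$'' suggests freezing $\boldsymbol{\omega}$ at constants (its averages over small cubes) simultaneously with $x$, then invoking Lemma \ref{semicontinuity lemma} directly. You instead first replace $\boldsymbol{\omega}^{\nu}$ by the limit $\boldsymbol{\omega}(x)$ in the integrand and then invoke Theorem \ref{semicontinuity with x dependence} as a black box. Both routes are valid and use the same estimates; yours has the mild advantage of recycling Theorem \ref{semicontinuity with x dependence} wholesale rather than reproving its freezing step.
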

\begin{remark}
 In the special case when $k_{i}=1$ for all $1 \leq i \leq m,$ this theorem recovers the classical result with the improvement 
 that the $p_{i}$s are allowed to be different from one another. If we take, $p_{i} = p$ for every $1 \leq i \leq m,$ as well, then we obtain
precisely the classical results,  i.e theorem 8.8 or theorem 8.11 in \cite{DCV2}, depending on whether $p=\infty$ or $1 \leq p < \infty.$ 
\end{remark}

\section{Weak Continuity}\label{weakcontinuity}
We now turn our attention to characterizing all sequentially weakly continuous functions in $W^{d,\boldsymbol{p}}(\Omega; \Lambda^{\boldsymbol{k-1}}).$

\begin{definition}[Weak continuity]
Let $\Omega\subset\mathbb{R}^n$ be open and 
let $ \displaystyle f:\boldsymbol{\Lambda^k}\rightarrow\mathbb{R}$ be continuous. We say that $f$ is weakly continuous on 
$\displaystyle W^{d,\boldsymbol{p}}\left(\Omega;\Lambda^{\boldsymbol{k -1}}\right)$, if for every sequence
$\displaystyle \left\lbrace \boldsymbol{\omega}^{\nu} \right\rbrace_{\nu=1}^{\infty} \subset  W^{d,\boldsymbol{p}}\left(\Omega;\Lambda^{\boldsymbol{k -1}}\right) $ satisfying
$\displaystyle  \boldsymbol{\omega}^{\nu} \rightharpoonup \boldsymbol{\omega} \text{ in } W^{d,\boldsymbol{p}}\left(\Omega;\Lambda^{\boldsymbol{k -1}}\right)$ 
for some $\displaystyle \boldsymbol{\omega} \in W^{d,\boldsymbol{p}}\left(\Omega;\Lambda^{\boldsymbol{k -1}}\right),$
we have
$$ f\left( \boldsymbol{d \omega}^{\nu}\right) \rightharpoonup f\left( \boldsymbol{d \omega} \right) \text{ in }\mathcal{D}'(\Omega).$$
\end{definition}
\subsection{Necessary condition}
\begin{theorem}[Necessary condition]\label{weakcontnecessary}
Let $\Omega\subset\mathbb{R}^n$ be open, bounded and let 
$f:\boldsymbol{\Lambda^{k}}\rightarrow\mathbb{R}$ 
be weakly continuous on $W^{d,\boldsymbol{\infty}}\left(\Omega;\boldsymbol{ \Lambda^{k} } \right).$ 
Then, $f$ is vectorially ext. one affine, and hence, is of the form
\begin{equation}\label{21.4.2015.5}
f(\boldsymbol{\xi})= \sum_{ \substack{ \boldsymbol{\alpha},  \\ 0 \leq \lvert \boldsymbol{k\alpha} \rvert \leq n } }\left\langle c_{\boldsymbol{\alpha}} %
;\boldsymbol{\xi^{\alpha}}\right\rangle \text{ for all }\boldsymbol{\xi}\in \boldsymbol{\Lambda^{k}},
\end{equation}
where $ c_{ \boldsymbol{\alpha}}\in \Lambda^{ \lvert \boldsymbol{k\alpha } \rvert }(\mathbb{R}^n),$ for 
every $\boldsymbol{\alpha}$ with  $0 \leq \lvert \boldsymbol{k\alpha } \rvert \leq n$.
\end{theorem}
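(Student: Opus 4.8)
The plan is to prove directly that $f$ is vectorially ext. one affine; once that is done, the representation~\eqref{21.4.2015.5} follows at once from the equivalence of (iii) and (iv) in Theorem~\ref{Thm vectorial ext. quasiaffine}. So fix $\xi_i\in\Lambda^{k_i}$, $\alpha\in\Lambda^1$ and $\beta_i\in\Lambda^{k_i-1}$ for $1\le i\le m$, and consider $g(t)=f\bigl(\xi_1+t\,\alpha\wedge\beta_1,\dots,\xi_m+t\,\alpha\wedge\beta_m\bigr)$; the goal is to show $g$ is affine. If $\alpha=0$ then $g$ is constant, so assume $\alpha\neq0$.

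First I would set up a laminate-type oscillating sequence. Write $\alpha=\sum_j a_j e^j$ and $\langle a,x\rangle=\sum_j a_jx_j$; since the $\beta_i$ have constant coefficients, $d\bigl(\phi(\langle a,x\rangle)\,\beta_i\bigr)=\phi'(\langle a,x\rangle)\,\alpha\wedge\beta_i$ for every Lipschitz $\phi:\mathbb{R}\to\mathbb{R}$. By the Poincar\'e lemma choose forms $w_i$ on $\mathbb{R}^n$ with $dw_i=\xi_i$. For $\lambda\in[0,1]$ let $h$ be the $1$-periodic extension of $\chi_{[0,\lambda)}$, set $v^\nu(s)=\frac{1}{\nu}\int_0^{\nu s}h(\sigma)\,d\sigma$, and define $\omega_i^\nu:=w_i+v^\nu(\langle a,x\rangle)\,\beta_i$. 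Because $0\le h\le1$ and $v^\nu(s)\to\lambda s$ uniformly on bounded sets, $\{\boldsymbol{\omega}^\nu\}$ is bounded in $W^{d,\boldsymbol{\infty}}(\Omega;\boldsymbol{\Lambda^{k-1}})$ with $d\omega_i^\nu=\xi_i+h(\nu\langle a,x\rangle)\,\alpha\wedge\beta_i$; and since $h(\nu\langle a,x\rangle)$ converges weak-$\ast$ in $L^\infty$ to its mean $\lambda$, we obtain $\boldsymbol{\omega}^\nu\stackrel{\ast}{\rightharpoonup}\boldsymbol{\omega}$ in $W^{d,\boldsymbol{\infty}}$ with $d\omega_i=\xi_i+\lambda\,\alpha\wedge\beta_i$.

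Next I would invoke weak continuity. Since $h$ takes only the values $0$ and $1$, on $\{h(\nu\langle a,x\rangle)=1\}$ we have $f(\boldsymbol{d\omega}^\nu)=g(1)$ and elsewhere $f(\boldsymbol{d\omega}^\nu)=g(0)$, so $f(\boldsymbol{d\omega}^\nu)=g(0)+h(\nu\langle a,x\rangle)\bigl(g(1)-g(0)\bigr)$ converges weak-$\ast$ in $L^\infty(\Omega)$, hence in $\mathcal D'(\Omega)$, to $(1-\lambda)g(0)+\lambda g(1)$. But weak continuity of $f$ forces $f(\boldsymbol{d\omega}^\nu)\rightharpoonup f(\boldsymbol{d\omega})=g(\lambda)$ in $\mathcal D'(\Omega)$, and by uniqueness of distributional limits $g(\lambda)=(1-\lambda)g(0)+\lambda g(1)$ for every $\lambda\in[0,1]$; thus $g$ is affine on $[0,1]$. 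Repeating the argument with $\xi_i$ replaced by $\xi_i-T\,\alpha\wedge\beta_i$ and $\beta_i$ by $2T\beta_i$ gives $g$ affine on $[-T,T]$ for every $T>0$, hence on all of $\mathbb{R}$. So $f$ is vectorially ext. one affine, and Theorem~\ref{Thm vectorial ext. quasiaffine} yields~\eqref{21.4.2015.5}.

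I do not expect a serious obstacle. The only points requiring care are bookkeeping: the $\beta_i$ must be taken with constant coefficients so that $d\bigl(\phi(\langle a,x\rangle)\beta_i\bigr)$ equals exactly $\phi'(\langle a,x\rangle)\,\alpha\wedge\beta_i$ with no remainder, and $h$ must be chosen \emph{two-valued} so that $f(\boldsymbol{d\omega}^\nu)$ is itself a clean two-level oscillation whose weak-$\ast$ limit is visibly the convex combination $(1-\lambda)g(0)+\lambda g(1)$ (with a general $h$ one would only learn that $g(\lambda)$ equals an average of $g\circ h$, which is circular). The facts that $v^\nu$ is uniformly bounded and converges uniformly, and that a bounded $1$-periodically oscillating sequence converges weak-$\ast$ to its average, are routine.
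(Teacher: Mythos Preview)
Your argument is correct and complete. It differs from the paper's own proof, which is considerably shorter on the page because it delegates most of the work to earlier results: the paper observes that if $f$ is weakly continuous, then for every $\phi\in C_c^\infty(\Omega)$ both $\int_\Omega\phi\,f(\boldsymbol{d\omega})$ and $-\int_\Omega\phi\,f(\boldsymbol{d\omega})$ are weak-$\ast$ lower semicontinuous on $W^{d,\boldsymbol{\infty}}$, and then invokes Theorem~\ref{necessary condition semicontinuity} (the necessary condition for semicontinuity) to conclude that $\boldsymbol{\xi}\mapsto\phi(x)f(\boldsymbol{\xi})$ is vectorially ext.\ quasiaffine for every $\phi$, hence $f$ is; Theorem~\ref{Thm vectorial ext. quasiaffine} then gives the representation. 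Your route bypasses Theorem~\ref{necessary condition semicontinuity} entirely by constructing the two-level laminate directly and reading off ext.\ one affinity from the distributional limit. This is more self-contained and more elementary---especially since the paper omits the proof of Theorem~\ref{necessary condition semicontinuity} as a ``long but straightforward adaptation''---while the paper's approach has the advantage of reusing machinery already in place and making the logical dependence on semicontinuity transparent. Both routes ultimately rest on Theorem~\ref{Thm vectorial ext. quasiaffine} for the final structure formula.
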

\begin{remark}
 As in remark \ref{remarkinftynecessaryforp}, $f$ being vectorially ext. one affine is a necessary condition for 
 weak continuity in $W^{d,\boldsymbol{p}}\left(\Omega;\boldsymbol{\Lambda^{k-1}}\right)$ as well.
\end{remark}
\begin{proof}
 Since $f$ is weakly continuous on $W^{d,\boldsymbol{\infty}}\left(\Omega;\boldsymbol{ \Lambda^{k} } \right)$, then for any $\phi \in C_{c}^{\infty}(\Omega),$ the integrals 
 $\int_{\Omega}\phi(x)f( \boldsymbol{ d\omega})$ and $ - \int_{\Omega}\phi(x)f( \boldsymbol{ d\omega})$ are both weakly lower semicontinuous in  
 $W^{d,\boldsymbol{\infty}}\left(\Omega;\boldsymbol{ \Lambda^{k} } \right).$ Using Theorem \ref{necessary condition semicontinuity}, we obtain that 
 $$ \boldsymbol{\xi} \mapsto \phi(x)f(\boldsymbol{\xi})$$ must be vectorially ext. quasiaffine. Since $\phi \in C_{c}^{\infty}(\Omega)$ is arbitrary, this implies 
 $\boldsymbol{\xi} \mapsto f(\boldsymbol{\xi})$ must be vectorially ext. quasiaffine. This finishes the proof.
\end{proof}
\subsection{Weak continuity of wedge products}
\subsubsection{Weak wedge products for exact forms}
Before moving on to results concerning sufficient condition for weak continuity, we first develop the notion of weak or distributional wedge products in this subsection. 
We start with some terminology for the integrability exponents. 
\begin{definition}[Admissible Sobolev and H\"{o}lder exponent]\label{Sobolevholderexponent}
Given $\boldsymbol{k},\boldsymbol{\alpha}$, we call $\boldsymbol{p}$ an \emph{admissible Sobolev exponent} (with respect to 
$\boldsymbol{\alpha}$ and $\boldsymbol{k}$), if $\boldsymbol{p} = (p_1,\ldots, p_m),$ where $1 <  p_i < \infty $ for all $1 \leq i \leq m ,$ satisfies
\begin{align}\label{sobolevexponentdefinition}
 1 + \frac{1}{n} \geq \frac{1}{\theta} = \sum_{i=1}^{m} \frac{\alpha_i}{p_i} ,  
\end{align} 
and \begin{align}\label{pbiggerthanndefinition}
     1 > \frac{1}{\theta} - \frac{1}{p_i}
    \end{align}
for all $1 \leq i \leq m .$ We call $\boldsymbol{q}$ an \emph{admissible H\"{o}lder exponent} with respect to 
$\boldsymbol{\alpha}$ and $\boldsymbol{k}$, if $\boldsymbol{q} = (q_1,\ldots, q_m)$ where $1 <  q_i \leq \infty $ for all $1 \leq i \leq m ,$ satisfies
\begin{align}\label{holderexponentdefinition}
 1  \geq \frac{1}{\rho} = \sum_{i=1}^{m} \frac{\alpha_i}{q_i} ,  
\end{align} 
and \begin{align}\label{qbiggerthanndefinition}
     1 \geq \frac{1}{\rho} - \frac{1}{q_i}
    \end{align}
for all $1 \leq i \leq m .$
\end{definition}
\begin{remark}
 Note that the assumed upper bound on $\displaystyle \frac{1}{\theta} - \frac{1}{p_i}$ is only a restriction if
 $p_{i} \geq n .$ The last inequality just means that at most one of the $q_{i}$s can be $ \infty$ and $\alpha_{i}=1$ if $q_{i} = \infty$ for some $i.$
\end{remark}

\begin{definition}[Associated exponent pair]\label{associatedexponent}
 Let $\boldsymbol{p}$ be an admissible Sobolev exponent and $\boldsymbol{q}$ be either an 
admissible Sobolev exponent or an admissible H\"{o}lder exponent with respect to given 
$\boldsymbol{\alpha}$ and $\boldsymbol{k}.$ 

\noindent We call $( \boldsymbol{p}, \boldsymbol{q})$ an \emph{associated exponent pair} if for all $i=1,\ldots, m,$ we have, 
\begin{align*}
  p_{i} &\geq \frac{nq_{i}}{n+q_{i}} &&\text{ if } q_{i} < \infty, \\
  p_{i} &\geq n &&\text{ if }q_{i}=\infty.
\end{align*}
Furthermore, if the inequalities are strict for all $1 \leq i \leq m$, we call $( \boldsymbol{p}, \boldsymbol{q})$ an \emph{associated compact exponent pair}. 
\end{definition}
\begin{remark}
 Note that if $\frac{nq_{i}}{n+q_{i}} \leq 1$ for some $i,$ then the condition $p_{i} \geq \frac{nq_{i}}{n+q_{i}}$ is not a restriction since $p_{i} > 1$ anyway.
\end{remark}\smallskip

\noindent Now we need a lemma which shows how a bound of the exterior derivative implies improved regularity of the coexact part in the  Hodge decomposition. 
\begin{lemma}\label{Hodge decomposition}
 Let $\Omega\subset\mathbb{R}^n$ be open, bounded and smooth. Let $1 \leq k \leq n.$ Let $\omega \in L^{q}( \Omega; \Lambda^{k-1}),$ $d\omega \in 
 L^{p}( \Omega; \Lambda^{k})$ with $ 1 < p < \infty$ and $1 < q \leq \infty. $ 
 Then there exists a decomposition of $\omega$ such that 
 $$ \omega = \omega_{exact} + \omega_{coexact} + \omega_{har} \qquad \text{ in } \Omega, $$
 such that 
 $\omega_{exact}$ is exact, $\omega_{har}$ is a harmonic field and $\omega_{coexact} \in W^{1,p}(\Omega; \Lambda^{k-1}).$ In other words, $\omega_{exact} = d\varphi$ with $\varphi \in W^{1,r}(\Omega; \Lambda^{k-2})$ for all $1 < r \leq q$ 
 if $q< \infty$ or $1 < r< \infty$ if $q=\infty $ and $d\omega_{har} = \delta \omega_{har} = 0$ in $\Omega.$
 Moreover, we have the estimates 
 $$ \lVert \varphi \rVert_{W^{1,r}} \leq c \lVert \omega \rVert_{L^{q}},\  \lVert \omega_{har} \rVert_{C_{loc}^{\infty}} \leq c \lVert \omega \rVert_{L^{q}} \text{ and }
\lVert \omega_{coexact} \rVert_{W^{1, p}} \leq c \lVert d\omega \rVert_{L^{p}}.$$
\end{lemma}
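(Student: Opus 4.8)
The idea is to obtain the three pieces from the $L^{r}$ Hodge decomposition on a smooth bounded domain and then to \emph{upgrade} the regularity of the coexact piece using the extra information $d\omega\in L^{p}$. First dispose of the case $q=\infty$: since $\Omega$ is bounded, $\omega\in L^{r}(\Omega;\Lambda^{k-1})$ with $\lVert\omega\rVert_{L^{r}}\leq\lvert\Omega\rvert^{1/r}\lVert\omega\rVert_{L^{\infty}}$ for every finite $r>1$, and $L^{q}(\Omega)\hookrightarrow L^{r}(\Omega)$ whenever $r\leq q$; so it suffices to run the argument for a finite exponent and then let it range over all admissible values, the estimates in terms of $\lVert\omega\rVert_{L^{q}}$ following from the embedding.

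\emph{Step 1 (the $L^{r}$ decomposition).} Invoke the $L^{r}$ version of the Hodge--Kodaira--Morrey decomposition on a smooth bounded domain (as in \cite{iwaniec-lutoborski-null-lagrangian}; see also the references collected in \cite{BandDacSil}), in the canonical gauge: write uniquely $\omega=d\varphi+\omega_{coexact}+\omega_{har}$, where $\varphi\in W^{1,r}(\Omega;\Lambda^{k-2})$ is coexact, $\omega_{coexact}\in W^{1,r}(\Omega;\Lambda^{k-1})$ is coexact with $\delta\omega_{coexact}=0$ and $\nu\lrcorner\,\omega_{coexact}=0$ on $\partial\Omega$, $\omega_{har}\in\mathcal{H}_{N}(\Omega;\Lambda^{k-1})$, and $\lVert\varphi\rVert_{W^{1,r}}+\lVert\omega_{coexact}\rVert_{W^{1,r}}+\lVert\omega_{har}\rVert_{L^{r}}\leq c\lVert\omega\rVert_{L^{r}}$. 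Setting $\omega_{exact}:=d\varphi$ already gives $\omega_{exact}$ exact and $\omega_{har}$ a harmonic field; since $\mathcal{H}_{N}$ is finite dimensional and its elements are smooth in the interior, interior elliptic estimates give $\lVert\omega_{har}\rVert_{C^{\infty}_{loc}}\leq c\lVert\omega_{har}\rVert_{L^{1}}\leq c\lVert\omega\rVert_{L^{q}}$. Uniqueness of the decomposition makes the $\varphi$ produced for different $r$ coincide, so $\varphi\in W^{1,r}(\Omega;\Lambda^{k-2})$ for the whole admissible range, with $\lVert\varphi\rVert_{W^{1,r}}\leq c\lVert\omega\rVert_{L^{q}}$. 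When $k=1$ the target $\Lambda^{k-2}$ is trivial, which forces $\omega_{exact}=0$, so there is nothing to prove in that slot.

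\emph{Step 2 (upgrading the coexact part).} It remains to show $\omega_{coexact}\in W^{1,p}$ with $\lVert\omega_{coexact}\rVert_{W^{1,p}}\leq c\lVert d\omega\rVert_{L^{p}}$. By Step 1, $\omega_{coexact}$ is the unique solution of
\[
 d\,\omega_{coexact}=d\omega,\qquad \delta\,\omega_{coexact}=0 \ \text{ in }\Omega,\qquad \nu\lrcorner\,\omega_{coexact}=0 \ \text{ on }\partial\Omega,\qquad \omega_{coexact}\perp\mathcal{H}_{N};
\]
the compatibility condition is automatic, since $d\omega$ is $L^{2}$-orthogonal to $\mathcal{H}_{N}$: for $h\in\mathcal{H}_{N}$, integrating by parts and using $\delta h=0$, $\nu\lrcorner h=0$ gives $\int_{\Omega}\langle d\omega,h\rangle=\int_{\partial\Omega}\langle\omega,\nu\lrcorner h\rangle=0$. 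Consequently $\omega_{coexact}$ satisfies the Hodge--Laplace equation $\Delta\omega_{coexact}=\delta(d\omega)$ with the complementing boundary conditions, and the right-hand side lies in $W^{-1,p}$ because $d\omega\in L^{p}$. Agmon--Douglis--Nirenberg regularity for this elliptic system (equivalently, the $L^{p}$ Gaffney inequality together with a Poincaré-type inequality on the orthogonal complement of $\mathcal{H}_{N}$) then yields $\omega_{coexact}\in W^{1,p}(\Omega;\Lambda^{k-1})$ with $\lVert\omega_{coexact}\rVert_{W^{1,p}}\leq c\lVert d\omega\rVert_{L^{p}}$, which completes the proof.

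\emph{Main obstacle.} The analytic ingredients — the $L^{r}$ Hodge decomposition with estimates and the $W^{1,p}$ elliptic estimate for the Hodge boundary value problem on a smooth bounded domain — are classical, so the work is mostly bookkeeping: fixing a gauge in which the decomposition is unique (so the pieces, in particular $\varphi$, do not depend on the exponent used to produce them), checking that the solvability/compatibility conditions hold automatically because the relevant data are exact, and, crucially, justifying the regularity upgrade of $\omega_{coexact}$ from the datum $d\omega\in L^{p}$ rather than from the a priori weaker information $\omega\in L^{q}$. I expect this last point — that the coexact part inherits the integrability of $d\omega$, not merely that of $\omega$ — to be the conceptual heart of the lemma.
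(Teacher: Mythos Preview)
Your proposal is correct and follows essentially the same approach as the paper: apply the $L^{r}$ Hodge decomposition (the paper cites Theorem~6.9(iii) of \cite{CsatoDacKneuss}) to obtain $\omega = da + \delta b + h$ with the usual gauge and estimates, then observe that the coexact piece $\delta b$ satisfies the first-order elliptic system $d(\delta b)=d\omega$, $\delta(\delta b)=0$, $\nu\lrcorner(\delta b)=0$ and invoke $L^{p}$ elliptic regularity for that system to upgrade $\delta b$ to $W^{1,p}$ with norm controlled by $\lVert d\omega\rVert_{L^{p}}$. Your extra bookkeeping (uniqueness across exponents, the compatibility check against $\mathcal{H}_{N}$) is more explicit than the paper's terse version, but the argument is the same.
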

\begin{proof}
 Fix $1 < r < \infty$ such that $r \leq q.$ Then since $\omega \in L^{q}(\Omega; \Lambda^{k-1})$ implies $\omega \in L^{r}(\Omega; \Lambda^{k-1}),$ 
 we use Theorem 6.9(iii) of \cite{CsatoDacKneuss} 
to obtain the decomposition
\begin{align*}
\left\lbrace \begin{gathered}
              \omega = da + \delta b + h\quad \text{ and } \quad \delta a = d b= dh = \delta h = 0 \text{ in } \Omega,  \\
               \nu\wedge a = \nu\lrcorner b  = 0 \text{ on }\partial\Omega.
             \end{gathered}\right.
\end{align*}
with $a \in W_{T}^{1,r}(\Omega; \Lambda^{k-2}),$ $b \in W_{N}^{1,r}(\Omega; \Lambda^{k})$ and $h \in \mathcal{H}(\Omega; \Lambda^{k-1}). $ 
Moreover, we also have the estimates
\begin{align*}
 \lVert a \rVert_{W^{1,r}} \leq c \lVert \omega \rVert_{L^{r}} ,\qquad  \lVert h \rVert_{C_{loc}^{\infty}} \leq c \lVert \omega \rVert_{L^{r}}.
\end{align*}
Now since 
$d\omega \in L^{p}(\Omega, \Lambda^{k}),$ we see that $d(\delta b) = d\omega \in L^{p}(\Omega, \Lambda^{k}) ,$ $\delta (\delta b) = 0$ in $\Omega$ and 
$\nu\lrcorner \delta b = 0$ in $\partial\Omega,$ as $\nu\lrcorner b = 0$ in $\partial\Omega.$ Regularity result for this first order elliptic system 
implies $\delta b \in W^{1,p}$ with the estimate. Setting 
$\omega_{exact} = da$, $\omega_{har} = h$ and $\omega_{coexact} = \delta b$ concludes the proof.\end{proof}
\begin{remark}
 If we assume $\nu\wedge\omega = 0$ on $\partial\Omega,$ it is possible to use Hodge decomposition with vanishing tangential components 
 (see Theorem 6.9(i) of \cite{CsatoDacKneuss}) to prove the lemma, in which case we would also have 
 $\omega_{har} \in \mathcal{H}_{T}(\Omega; \Lambda^{k-1})$ and $\nu\wedge \omega_{coexact} = 0$ on $\partial\Omega.$
\end{remark}
\smallskip 
 
\noindent We call $\omega_{exact},\omega_{har}$ and $\omega_{coexact},$ respectively, the \emph{exact part}, \emph{harmonic part} and the \emph{coexact part} of $\omega.$ 
Now we are ready to define weak wedge products. We start with the case of exact forms first.
\begin{definition}[Weak wedge product for exact forms]\label{definitionexactweak}
Let $\Omega\subset\mathbb{R}^n$ be open, bounded and smooth. Let $\boldsymbol{p}$ be an admissible Sobolev exponent with respect to 
$\boldsymbol{\alpha}$ and $\boldsymbol{k}.$ 
Then for any componentwise exact 
$\boldsymbol{k}$-form $\boldsymbol{d\omega}=(d\omega_{1},\ldots,d\omega_{m}) \in L^{\boldsymbol{p}}(\Omega; \boldsymbol{\Lambda^{k}}) ,$ we define 
$\left(\boldsymbol{d\omega}^{\boldsymbol{\alpha}}\right)_{weak}  \in \mathcal{D}'(\Omega;\Lambda^{ \lvert \boldsymbol{k\alpha } \rvert }(\mathbb{R}^n)) ,$ by the actions
\begin{align}\label{weakexactwedgeproduct}
 &\left(\boldsymbol{d\omega}^{\boldsymbol{\alpha}}\right)_{weak}(\psi) \notag \\&:=  - (-1)^{N_{i}^{j_{i}}}\int_{\Omega} \langle \delta\psi ; 
 d\omega_{1}^{\alpha_{1}}\wedge\ldots\wedge d\omega_{i}^{j_{i}-1}\wedge \omega_{i, coexact}\wedge d\omega_{i}^{\alpha_{i} -j_{i}}\wedge \ldots \wedge 
 d\omega_{m}^{\alpha_{m}} \rangle ,  
\end{align}
for all $\psi \in 
 C_{c}^{\infty}(\Omega;\Lambda^{ \lvert \boldsymbol{k\alpha } \rvert }(\mathbb{R}^n)),$
where $\omega_{i,coexact}$ stands for the coexact part of $\omega_{i}$ and $\displaystyle N_{i}^{j_{i}} = k_{i}(j_{i} -1) + \sum_{j=1}^{i-1} k_{j}\alpha_{j}, $ 
for any $i= 1,\ldots, m,$ $j_{i} = 1, \ldots, \alpha_{i}.$ 
\end{definition}
\begin{remark}\label{weakwedgeexactremark} 
Lemma \ref{Hodge decomposition}, Sobolev embedding and the conditions \eqref{sobolevexponentdefinition} and \eqref{pbiggerthanndefinition} together ensure that the 
integrals on the right hand side of \eqref{weakexactwedgeproduct} are all finite. It is easy to see that they are also equal and if $1 \geq \frac{1}{\theta},$ then  
$$ \left(\boldsymbol{d\omega}^{\boldsymbol{\alpha}}\right)_{weak} = \boldsymbol{d\omega}^{\boldsymbol{\alpha}} \qquad \text{ in } 
\mathcal{D}'(\Omega;\Lambda^{ \lvert \boldsymbol{k\alpha } \rvert }(\mathbb{R}^n)) .$$
\end{remark}
This is not the only possible definition of weak wedge products for exact forms. We can require even less integrability on $\boldsymbol{d\omega}$ if we assume some integrability 
of $\boldsymbol{\omega}.$ The following definition is a generalization of the definition used by 
Brezis-Nguyen \cite{BrezisNguyenjacobian} for the Jacobian determinant in the classical case. 

\begin{definition}[Very weak product]\label{definitionexactveryweak}
 Let $\Omega\subset\mathbb{R}^n$ be open, bounded and smooth. Let $\boldsymbol{p},\boldsymbol{q}$ satisfy $1 < p_{i} < \infty,$ $1 < q_{i} \leq \infty$ and 
 $$ 1 \geq \frac{1}{q_{i}} + \frac{1}{\theta}- \frac{1}{p_{i}}, \qquad \text{ for all } 1 \leq i \leq m,$$
 where $ \displaystyle \frac{1}{\theta} = \sum_{i=1}^{m} \frac{\alpha_i}{p_i}.$ Then for any $\boldsymbol{\omega} \in L^{\boldsymbol{q}}(\Omega; \boldsymbol{\Lambda^{k-1}})$ with 
$\boldsymbol{d\omega} \in L^{\boldsymbol{p}}(\Omega; \boldsymbol{\Lambda^{k}}),$ we define 
$\left(\boldsymbol{d\omega}^{\boldsymbol{\alpha}}\right)_{very\ weak}  \in \mathcal{D}'(\Omega;\Lambda^{ \lvert \boldsymbol{k\alpha } \rvert }(\mathbb{R}^n)) ,$ by the actions
\begin{align}\label{veryweakexactwedgeproduct}
 &\left(\boldsymbol{d\omega}^{\boldsymbol{\alpha}}\right)_{very\ weak}(\psi) \notag \\&:=  - (-1)^{N_{i}^{j_{i}}}\int_{\Omega} \langle \delta\psi ; 
 d\omega_{1}^{\alpha_{1}}\wedge\ldots\wedge d\omega_{i}^{j_{i}-1}\wedge \omega_{i}\wedge d\omega_{i}^{\alpha_{i} -j_{i}}\wedge \ldots \wedge d\omega_{m}^{\alpha_{m}}\rangle ,  
\end{align}
for all $\psi \in 
 C_{c}^{\infty}(\Omega;\Lambda^{ \lvert \boldsymbol{k\alpha } \rvert }(\mathbb{R}^n)),$
where $\displaystyle N_{i}^{j_{i}} = k_{i}(j_{i} -1) + \sum_{j=1}^{i-1} k_{j}\alpha_{j}, $ 
for any  $i= 1,\ldots, m,$ $j_{i} = 1, \ldots, \alpha_{i}.$ 
\end{definition}
Note that there are integrability exponents for which only one of them is well-defined. Even in the classical case, for the Jacobian determinant of a function 
$u \in W^{1, \frac{n^2}{n+1}}(\Omega;\mathbb{R}^{n}),$ only the first one is defined and for a function 
$u \in W^{1, n-1}(\Omega;\mathbb{R}^{n})\cap L^{\infty}(\Omega;\mathbb{R}^{n}),$ only the second one is defined. However, it is not difficult to show that when both are well-defined, we have, 
$$\left(\boldsymbol{d\omega}^{\boldsymbol{\alpha}}\right)_{weak} = \left(\boldsymbol{d\omega}^{\boldsymbol{\alpha}}\right)_{very\ weak} \qquad \text{ in } 
\mathcal{D}'(\Omega;\Lambda^{ \lvert \boldsymbol{k\alpha } \rvert }(\mathbb{R}^n)) . $$

We also have the following general telescopic estimate. 
\begin{lemma}\label{telescopicexactwithoutinfty}
Let $\Omega\subset\mathbb{R}^n$ be open, bounded and smooth. Let $\boldsymbol{k},\boldsymbol{\alpha},\boldsymbol{p}, \boldsymbol{q}$ be given. Let $\boldsymbol{\mu}$ be given by, 
$$1 = \frac{1}{\mu_{i}} + \frac{1}{\theta} - \frac{1}{p_{i}} \qquad \text{ for all }1 \leq i \leq m.$$ 
\begin{itemize}
 \item[(i)] If $\boldsymbol{p}$ is an admissible Sobolev exponent, then for any two componentwise exact 
$\boldsymbol{k}$-form $\boldsymbol{d\xi}, \boldsymbol{d\zeta} \in L^{\boldsymbol{p}}(\Omega; \boldsymbol{\Lambda^{k}}),$ there exists a constant $C > 0$ such that 
  \begin{align*}
            \Big\lvert \big[ &\left( \boldsymbol{d\xi}^{\boldsymbol{\alpha}} \right)_{weak}  
            - \left(\boldsymbol{d\zeta}^{\boldsymbol{\alpha}}\right)_{weak} \big](\psi) \Big\rvert \\ 
            &\begin{aligned}\leq C  \sum_{i=1}^{m} \alpha_{i}\left\lVert \delta \psi \right\rVert_{\infty}\left\lVert \xi_{i,coexact} - \zeta_{i,coexact} \right\rVert_{\mu_{i}} 
            \left( \left\lVert d\xi_{i} \right\rVert_{p_{i}} 
            + \left\lVert d\zeta_{i} \right\rVert_{p_{i}} \right)^{\alpha_{i}-1}  \\ \prod\limits_{\substack{j=1\\ j \neq i}}^{m} \left( 
            \left\lVert d\xi_{j} \right\rVert_{p_{j}} 
            + \left\lVert d\zeta_{j} \right\rVert_{p_{j}} \right)^{\alpha_{j}},\end{aligned}\end{align*}
           for all $\psi \in C_{c}^{\infty}(\Omega;\Lambda^{ \lvert \boldsymbol{k\alpha } \rvert }(\mathbb{R}^n)).$ 
 \item[(ii)] If $\boldsymbol{p},\boldsymbol{q}$ are as in definition \ref{definitionexactveryweak}, then for any $\boldsymbol{\xi},\boldsymbol{\zeta} \in 
 L^{\boldsymbol{q}}(\Omega; \boldsymbol{\Lambda^{k-1}})$ with $\boldsymbol{d\xi}, \boldsymbol{d\zeta} \in L^{\boldsymbol{p}}(\Omega; \boldsymbol{\Lambda^{k}}),$ there exists a constant $C > 0$ such that 
  \begin{align*}
            \Big\lvert \big[ \left( \boldsymbol{d\xi}^{\boldsymbol{\alpha}} \right)_{very\ weak}  
            &- \left(\boldsymbol{d\zeta}^{\boldsymbol{\alpha}}\right)_{very\ weak} \big](\psi) \Big\rvert \\ 
            &\begin{aligned}\leq C  \sum_{i=1}^{m} \alpha_{i}\left\lVert \delta \psi \right\rVert_{\infty}\left\lVert \xi_{i} - \zeta_{i} \right\rVert_{\mu_{i}} 
            \left( \left\lVert d\xi_{i} \right\rVert_{p_{i}} 
            + \left\lVert d\zeta_{i} \right\rVert_{p_{i}} \right)^{\alpha_{i}-1}  \\ \prod\limits_{\substack{j=1\\ j \neq i}}^{m} \left( 
            \left\lVert d\xi_{j} \right\rVert_{p_{j}} 
            + \left\lVert d\zeta_{j} \right\rVert_{p_{j}} \right)^{\alpha_{j}},\end{aligned}\end{align*}
           for all $\psi \in C_{c}^{\infty}(\Omega;\Lambda^{ \lvert \boldsymbol{k\alpha } \rvert }(\mathbb{R}^n)).$          
\end{itemize}
\smallskip
\end{lemma}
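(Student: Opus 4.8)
The plan is to run a two-stage ``telescoping plus hand-made integration by parts'' argument, in the spirit of the classical telescopic estimates for distributional Jacobians (as in Brezis--Nguyen \cite{BrezisNguyenjacobian}). I will carry out part (i) in detail; part (ii) follows from exactly the same computation, with each coexact part $\omega_{i,coexact}$ replaced by the full form $\omega_{i}$ and with the hypothesis $1 \geq \frac{1}{q_{i}} + \frac{1}{\theta} - \frac{1}{p_{i}}$ of Definition \ref{definitionexactveryweak} (which yields $L^{q_{i}}(\Omega)\hookrightarrow L^{\mu_{i}}(\Omega)$, $\Omega$ being bounded) used in place of the admissibility conditions \eqref{sobolevexponentdefinition}--\eqref{pbiggerthanndefinition}. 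As a preliminary reduction, since all the bounds below are stable under approximation of the coexact parts $\xi_{i,coexact},\zeta_{i,coexact}$ in $W^{1,p_{i}}$ by smooth forms, while all the algebraic identities used are classical for smooth data, I may and do assume throughout that every form appearing is smooth.

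Fix a test form $\psi\in C_{c}^{\infty}(\Omega;\Lambda^{\lvert\boldsymbol{k\alpha}\rvert}(\mathbb{R}^{n}))$, discard the indices with $\alpha_{i}=0$, and let $i_{0}$ be the smallest index with $\alpha_{i_{0}}\geq 1$. By Remark \ref{weakwedgeexactremark} I may use, for both $\left(\boldsymbol{d\xi}^{\boldsymbol{\alpha}}\right)_{weak}$ and $\left(\boldsymbol{d\zeta}^{\boldsymbol{\alpha}}\right)_{weak}$, the representation of Definition \ref{definitionexactweak} with $i=i_{0}$ and $j_{i_{0}}=1$; since $d\xi_{i}=d\xi_{i,coexact}$ (the exact and harmonic parts of $\xi_{i}$ being closed), this gives
\[
\left[\left(\boldsymbol{d\xi}^{\boldsymbol{\alpha}}\right)_{weak}-\left(\boldsymbol{d\zeta}^{\boldsymbol{\alpha}}\right)_{weak}\right](\psi)=-(-1)^{N_{i_{0}}^{1}}\int_{\Omega}\langle\delta\psi;W_{\xi}-W_{\zeta}\rangle,
\]
where $W_{\xi}:=\xi_{i_{0},coexact}\wedge(d\xi_{i_{0}})^{\alpha_{i_{0}}-1}\wedge\bigwedge_{l\neq i_{0}}(d\xi_{l})^{\alpha_{l}}$ and $W_{\zeta}$ is defined in the same way with $\zeta$ replacing $\xi$. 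I then fix an ordering of the $\lvert\boldsymbol{\alpha}\rvert$ wedge-slots of $W_{\bullet}$ in which slot $1$ is the undifferentiated one, slots $2,\dots,\alpha_{i_{0}}$ carry the copies of $d\xi_{i_{0}}$ (resp.\ $d\zeta_{i_{0}}$), and the later slots carry the copies of $d\xi_{l}$ (resp.\ $d\zeta_{l}$) for $l>i_{0}$, and I telescope $W_{\xi}-W_{\zeta}$ into $\lvert\boldsymbol{\alpha}\rvert$ summands, the $\ell$-th having $\zeta$-factors in the slots $<\ell$, the ``difference factor'' in slot $\ell$, and $\xi$-factors in the slots $>\ell$.

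The summand coming from slot $1$ has difference factor $\xi_{i_{0},coexact}-\zeta_{i_{0},coexact}$ and all remaining factors exterior derivatives, so H\"older's inequality bounds its pairing with $\delta\psi$ by $\lVert\delta\psi\rVert_{\infty}\lVert\xi_{i_{0},coexact}-\zeta_{i_{0},coexact}\rVert_{\mu_{i_{0}}}\lVert d\xi_{i_{0}}\rVert_{p_{i_{0}}}^{\alpha_{i_{0}}-1}\prod_{l\neq i_{0}}\lVert d\xi_{l}\rVert_{p_{l}}^{\alpha_{l}}$, the reciprocal exponents summing to $\frac{1}{\mu_{i_{0}}}+\frac{1}{\theta}-\frac{1}{p_{i_{0}}}=1$ by the definition of $\mu_{i_{0}}$ and the norm $\lVert\xi_{i_{0},coexact}-\zeta_{i_{0},coexact}\rVert_{\mu_{i_{0}}}$ being finite by Lemma \ref{Hodge decomposition} and the Sobolev embedding $W^{1,p_{i_{0}}}\hookrightarrow L^{\mu_{i_{0}}}$ --- which is precisely where \eqref{sobolevexponentdefinition} (in the range $p_{i_{0}}<n$) and \eqref{pbiggerthanndefinition} (in the range $p_{i_{0}}\geq n$, where it forces $\mu_{i_{0}}<\infty$) enter. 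For a summand from a slot $\ell>1$, say of ``type'' $j$ (its differentiated factor being $d\xi_{j}$ against $d\zeta_{j}$), the difference factor is $d(\xi_{j,coexact}-\zeta_{j,coexact})$, and here I integrate by parts once more: writing the wedge product as $A\wedge d(\xi_{j,coexact}-\zeta_{j,coexact})\wedge B$ with $A$ the wedge of the slot-$(<\ell)$ factors (one of them, $\zeta_{i_{0},coexact}$, undifferentiated, the rest exact) and $B$ the wedge of the (exact) slot-$(>\ell)$ factors, the Leibniz rule gives
\[
A\wedge d(\xi_{j,coexact}-\zeta_{j,coexact})\wedge B=\pm\,d\!\left(A\wedge(\xi_{j,coexact}-\zeta_{j,coexact})\wedge B\right)\pm\,dA\wedge(\xi_{j,coexact}-\zeta_{j,coexact})\wedge B,
\]
since $dB=0$ and $dA=\pm\,d\zeta_{i_{0},coexact}\wedge(\text{the remaining, already exact, factors of }A)$. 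Paired against $\delta\psi$ and integrated, the total-differential term contributes nothing, because $\int_{\Omega}\langle\delta\psi;d(\cdots)\rangle=0$ by $\delta^{2}=0$ and the compact support of $\psi$; in the surviving term the difference factor $\xi_{j,coexact}-\zeta_{j,coexact}$ is wedged only against exterior derivatives --- exactly $\alpha_{i_{0}}$ of type $i_{0}$, $\alpha_{j}-1$ of type $j$, and $\alpha_{l}$ of type $l$ for $l\neq i_{0},j$ --- so another application of H\"older's inequality (the same arithmetic gives $\frac{1}{\mu_{j}}+\frac{1}{\theta}-\frac{1}{p_{j}}=1$) bounds its pairing with $\delta\psi$ by $\lVert\delta\psi\rVert_{\infty}\lVert\xi_{j,coexact}-\zeta_{j,coexact}\rVert_{\mu_{j}}\left(\lVert d\xi_{j}\rVert_{p_{j}}+\lVert d\zeta_{j}\rVert_{p_{j}}\right)^{\alpha_{j}-1}\prod_{l\neq j}\left(\lVert d\xi_{l}\rVert_{p_{l}}+\lVert d\zeta_{l}\rVert_{p_{l}}\right)^{\alpha_{l}}$.

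Finally, for each fixed $j$ there are exactly $\alpha_{j}$ summands of type $j$ (counting the slot-$1$ summand when $j=i_{0}$), and each is bounded by a constant multiple of the expression just displayed with $i=j$; summing over those $\alpha_{j}$ summands and then over $1\leq i\leq m$ produces precisely the factor $\alpha_{i}$ and the asserted right-hand side, the constant $C$ depending only on $n$, $\boldsymbol{k}$ and $\boldsymbol{\alpha}$ (it absorbs the combinatorial constants from estimating wedge products and the signs from permuting factors). I expect the main obstacle to be essentially organizational: keeping track of which slots are differentiated, of the signs produced by reordering wedge factors, and of the multiplicities in the telescoped sum, together with a clean justification of the integration by parts and of the product-rule manipulation at the stated low regularity --- all of which the smooth-approximation reduction is designed to take care of.
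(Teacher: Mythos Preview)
Your argument is correct and follows the same telescoping-plus-H\"older strategy as the paper, but you have reversed the order of two steps in a way that costs you an extra integration by parts. The paper telescopes \emph{first}, writing
\[
\big[(\boldsymbol{d\xi}^{\boldsymbol{\alpha}})_{weak}-(\boldsymbol{d\zeta}^{\boldsymbol{\alpha}})_{weak}\big](\psi)=\sum_{i=1}^{m}\sum_{j=1}^{\alpha_{i}}\left(d\zeta_{1}^{\alpha_{1}}\wedge\cdots\wedge d\zeta_{i}^{\,j-1}\wedge d(\xi_{i}-\zeta_{i})\wedge d\xi_{i}^{\alpha_{i}-j}\wedge\cdots\wedge d\xi_{m}^{\alpha_{m}}\right)_{weak}(\psi),
\]
and only \emph{then} applies Definition \ref{definitionexactweak} to each summand, choosing the undifferentiated slot to be precisely the one carrying $d(\xi_{i}-\zeta_{i})$, so that the integrand already contains $(\xi_{i}-\zeta_{i})_{coexact}=\xi_{i,coexact}-\zeta_{i,coexact}$ and H\"older applies immediately. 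By contrast, you fix the undifferentiated slot once (at $i_{0}$) before telescoping, which forces you to move it afterwards via the Leibniz-rule identity for every slot $\ell>1$; that manoeuvre is exactly the content of the equality of representations in Remark \ref{weakwedgeexactremark}, so you are reproving it by hand. Both routes yield the same bound with the same constants, and your smooth-approximation reduction is a perfectly acceptable way to justify the intermediate manipulations.
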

\begin{proof}
 It is just a matter of rewriting as a telescopic sum. We show only one, the other being similar. Note that we have, 
 \begin{align*}
  &\big[ \left( \boldsymbol{d\xi}^{\boldsymbol{\alpha}} \right)_{weak}  
            - \left(\boldsymbol{d\zeta}^{\boldsymbol{\alpha}}\right)_{weak} \big](\psi) 
            \\ &= \sum_{i=1}^{m}\sum_{j=1}^{\alpha_{i}} \left( d\zeta_{1}^{\alpha_{1}}\wedge\ldots\wedge d\zeta_{i}^{j-1}\wedge d(\xi_{i}-\zeta_{i})\wedge d\xi^{\alpha_{i} -j}_{i}\wedge\ldots
            \wedge d\xi_{m}^{\alpha_{m}}\right)_{weak} (\psi) .\end{align*}
            Using the definition of weak wedge product, the estimate follows from H\"{o}lder inequality.\end{proof}\smallskip
            
\noindent This immediately implies the weak continuity results for wedge product of exact forms.  
\begin{theorem}\label{weakcontexact}
 Let $\Omega\subset\mathbb{R}^n$ be open, bounded and smooth. Let $\boldsymbol{k},\boldsymbol{\alpha}$ be given.
 \begin{itemize}
  \item[(i)] Let $\boldsymbol{p}$ be an admissible Sobolev exponent such that $\displaystyle 1 + \frac{1}{n} > \frac{1}{\theta},$ and 
  $\boldsymbol{d\xi_{s}} \boldsymbol{\rightharpoonup} \boldsymbol{d\xi}$ in $L^{\boldsymbol{p}}(\Omega; \boldsymbol{\Lambda^{k}}),$ then 
  $$ \left( \boldsymbol{d\xi_{s}^{\alpha}} \right)_{weak} \boldsymbol{\rightharpoonup} \left( \boldsymbol{d\xi^{\alpha}}\right)_{weak} \quad \text{ in }
\mathcal{D}'(\Omega;\Lambda^{ \lvert \boldsymbol{k\alpha } \rvert }(\mathbb{R}^n)). $$
Moreover, if $\displaystyle 1 \geq \frac{1}{\theta},$ then 
$$ \boldsymbol{d\xi_{s}^{\alpha}}  \boldsymbol{\rightharpoonup}  \boldsymbol{d\xi^{\alpha}} \quad \text{ in }
\mathcal{D}'(\Omega;\Lambda^{ \lvert \boldsymbol{k\alpha } \rvert }(\mathbb{R}^n)). $$
If $\displaystyle 1 > \frac{1}{\theta},$ then we also have,
$$ \boldsymbol{d\xi_{s}^{\alpha}}  \boldsymbol{\rightharpoonup}  \boldsymbol{d\xi^{\alpha}} \quad \text{ in }
L^{\theta}(\Omega;\Lambda^{ \lvert \boldsymbol{k\alpha } \rvert }(\mathbb{R}^n)). $$

\item[(ii)] Let $\boldsymbol{p},\boldsymbol{q}$ be as in definition \ref{definitionexactveryweak} and 
$\boldsymbol{d\xi_{s}} \boldsymbol{\rightharpoonup} \boldsymbol{d\xi}$ in $L^{\boldsymbol{p}}(\Omega; \boldsymbol{\Lambda^{k}})$ and 
$\boldsymbol{\xi_{s}} \boldsymbol{\rightarrow} \boldsymbol{\xi}$ in $L^{\boldsymbol{q}}(\Omega; \boldsymbol{\Lambda^{k-1}}),$ then 
$$ \left(\boldsymbol{d\xi_{s}^{\alpha}}\right)_{very\ weak}  \boldsymbol{\rightharpoonup}  \left(\boldsymbol{d\xi^{\alpha}}\right)_{very\ weak} \quad \text{ in }
\mathcal{D}'(\Omega;\Lambda^{ \lvert \boldsymbol{k\alpha } \rvert }(\mathbb{R}^n)). $$
 \end{itemize}
\end{theorem}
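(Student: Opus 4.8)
The plan is to derive both statements from the telescopic estimate of Lemma~\ref{telescopicexactwithoutinfty}, feeding it convergence of the relevant potentials obtained from the Hodge decomposition of Lemma~\ref{Hodge decomposition} and from Rellich--Kondrachov. In each case, after applying the appropriate part of Lemma~\ref{telescopicexactwithoutinfty} to the pair $(\boldsymbol{d\xi_{s}},\boldsymbol{d\xi})$ and fixing a test form $\psi\in C_{c}^{\infty}(\Omega;\Lambda^{\lvert\boldsymbol{k\alpha}\rvert}(\mathbb{R}^{n}))$, the goal is to show the right-hand side of that estimate tends to $0$. Throughout, $\boldsymbol{\mu}$ denotes the exponent from that lemma, i.e. $1=\tfrac{1}{\mu_{i}}+\tfrac{1}{\theta}-\tfrac{1}{p_{i}}$.

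I would treat part (ii) first, since it is the more direct of the two. There the hypothesis already gives $\boldsymbol{\xi_{s}}\to\boldsymbol{\xi}$ strongly in $L^{\boldsymbol{q}}$, while $\boldsymbol{d\xi_{s}}\rightharpoonup\boldsymbol{d\xi}$ in $L^{\boldsymbol{p}}$ makes $\sup_{s}\lVert d\xi_{s,i}\rVert_{L^{p_{i}}}$ finite. The defining inequality $1\geq\tfrac{1}{q_{i}}+\tfrac{1}{\theta}-\tfrac{1}{p_{i}}$ of Definition~\ref{definitionexactveryweak} is exactly $\mu_{i}\leq q_{i}$, so on the bounded set $\Omega$ one has $L^{q_{i}}(\Omega)\hookrightarrow L^{\mu_{i}}(\Omega)$ continuously and hence $\xi_{s,i}\to\xi_{i}$ in $L^{\mu_{i}}$. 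Inserting $\boldsymbol{\xi_{s}},\boldsymbol{\xi}$ into Lemma~\ref{telescopicexactwithoutinfty}(ii), the right-hand side becomes a fixed constant times $\lVert\delta\psi\rVert_{\infty}$ times $\sum_{i}\alpha_{i}\lVert\xi_{s,i}-\xi_{i}\rVert_{\mu_{i}}$ times powers of the uniformly bounded quantities $\lVert d\xi_{s,j}\rVert_{p_{j}}+\lVert d\xi_{j}\rVert_{p_{j}}$, so it tends to $0$; this gives $(\boldsymbol{d\xi_{s}^{\alpha}})_{very\ weak}(\psi)\to(\boldsymbol{d\xi^{\alpha}})_{very\ weak}(\psi)$ for every $\psi$, as claimed.

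For part (i) the additional task is to manufacture strong convergence of the coexact potentials. The step I would carry out is: apply Lemma~\ref{Hodge decomposition} to each $\xi_{s,i}$ with the exponent $q$ there set to $p_{i}$, obtaining $\xi_{s,i,coexact}\in W^{1,p_{i}}(\Omega;\Lambda^{k_{i}-1})$ with $\lVert\xi_{s,i,coexact}\rVert_{W^{1,p_{i}}}\leq c\lVert d\xi_{s,i}\rVert_{L^{p_{i}}}$ and $d\xi_{s,i,coexact}=d\xi_{s,i}$; since the assignment $d\omega\mapsto\omega_{coexact}$ is a bounded linear map from the exact forms in $L^{p_{i}}$ into $W^{1,p_{i}}$ (uniqueness in the Hodge decomposition plus that estimate), it is weakly continuous, so $\xi_{s,i,coexact}\rightharpoonup\xi_{i,coexact}$ in $W^{1,p_{i}}$. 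Next I would check the exponent: $\tfrac{1}{\mu_{i}}=1-\tfrac{1}{\theta}+\tfrac{1}{p_{i}}$, the admissibility bound \eqref{pbiggerthanndefinition} gives $\tfrac{1}{\mu_{i}}>0$, and the standing hypothesis $1+\tfrac1n>\tfrac1\theta$ gives $\tfrac{1}{\mu_{i}}>\tfrac1{p_{i}}-\tfrac1n$, so $\mu_{i}$ sits strictly below the Sobolev conjugate of $p_{i}$ and $W^{1,p_{i}}(\Omega)\hookrightarrow L^{\mu_{i}}(\Omega)$ is compact; hence $\xi_{s,i,coexact}\to\xi_{i,coexact}$ in $L^{\mu_{i}}$. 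With this in hand, Lemma~\ref{telescopicexactwithoutinfty}(i) applied to $(\boldsymbol{d\xi_{s}},\boldsymbol{d\xi})$ runs exactly as in part (ii) and yields $(\boldsymbol{d\xi_{s}^{\alpha}})_{weak}\rightharpoonup(\boldsymbol{d\xi^{\alpha}})_{weak}$ in $\mathcal{D}'$. For the two sharper conclusions I would argue: when $1\geq\tfrac1\theta$, Remark~\ref{weakwedgeexactremark} identifies $(\,\cdot\,)_{weak}$ with the honest wedge product in $\mathcal{D}'$, so the statement becomes $\boldsymbol{d\xi_{s}^{\alpha}}\rightharpoonup\boldsymbol{d\xi^{\alpha}}$ in $\mathcal{D}'$; and when moreover $1>\tfrac1\theta$, Hölder gives $\lVert\boldsymbol{d\xi_{s}^{\alpha}}\rVert_{L^{\theta}}\leq\prod_{i}\lVert d\xi_{s,i}\rVert_{L^{p_{i}}}^{\alpha_{i}}$, a uniform bound with $\theta>1$, so any weak $L^{\theta}$ cluster point of $\{\boldsymbol{d\xi_{s}^{\alpha}}\}$ must equal $\boldsymbol{d\xi^{\alpha}}$ by the $\mathcal{D}'$ convergence, whence the whole sequence converges weakly in $L^{\theta}$.

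The step I expect to be the main obstacle --- really the only delicate point --- is the exponent bookkeeping in part (i): one has to be certain that the interpolation exponent $\mu_{i}$ produced by Lemma~\ref{telescopicexactwithoutinfty} lands strictly inside the compact range of $W^{1,p_{i}}\hookrightarrow L^{\mu_{i}}$, in both the subcritical case $p_{i}<n$ and the case $p_{i}\geq n$. This is exactly what the two admissibility conditions \eqref{sobolevexponentdefinition}--\eqref{pbiggerthanndefinition}, together with the strict inequality $1+\tfrac1n>\tfrac1\theta$ in (i), are designed to secure; everything else is a routine combination of the telescopic estimate and standard compactness.
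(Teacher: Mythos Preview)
Your proposal is correct and follows essentially the same approach as the paper: both parts are deduced from the telescopic estimate of Lemma~\ref{telescopicexactwithoutinfty}, with part (ii) immediate from the assumed strong convergence and part (i) obtained via the compact embedding $W^{1,p_{i}}\hookrightarrow L^{\mu_{i}}$ applied to the coexact parts, the $L^{\theta}$ upgrade coming from uniform boundedness and uniqueness of weak limits. Your write-up is in fact more detailed than the paper's, particularly in the exponent bookkeeping verifying that $\mu_{i}$ lies strictly in the compact range.
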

\begin{proof}
 The second conclusion is immediate form the telescopic estimate. For the first one, note that the hypotheses on $\boldsymbol{p}$ implies that the embeddings 
 $W^{1,p_{i}} \hookrightarrow L^{\mu_{i}}$ are compact for all $1 \leq i \leq m.$ Thus $d\omega_{s,i} \rightharpoonup d\omega_{i}$ in $L^{p_{i}}$ implies 
 $$\lVert \omega_{s,i,coexact} - \omega_{i,coexact}\rVert_{\mu_{i}} \rightarrow 0$$ for all $1 \leq i \leq m.$ The convergence in distribution follows. The weak convergence in 
 $L^{\theta}$ follows from the fact that in that case, $\lbrace \boldsymbol{d\xi_{s}^{\alpha}}  \rbrace$ is uniformly bounded in $L^{\theta}$ and thus has a weak limit in 
 $L^{\theta}.$ Uniqueness of the weak limit concludes the proof. 
\end{proof}

\subsubsection{Weak wedge product for general forms}
The first definition, i.e the definition of weak wedge products for exact forms can be used, together with Hodge decomposition to define weak wedge products for general forms 
$\boldsymbol{\omega}$ with 
some integrability of $\boldsymbol{d\omega}.$ To fix ideas, we start with two forms  $v_{1} \in W^{d, p_{1}}\left(\Omega;\Lambda^{k_{1}}(\mathbb{R}^{n})\right),$ 
$v_{2} \in W^{d, p_{2}}\left(\Omega;\Lambda^{k_{2}}(\mathbb{R}^{n})\right),$ with $\displaystyle  1 + \frac{1}{n} \geq \frac{1}{p_{1}} + 
\frac{1}{p_{2}} ,$ $1 < p_{1},p_{2} < \infty.$ Using Hodge decomposition, we have, formally,
\begin{align}\label{hodgeidentity}
  v_{1} \wedge v_{2} &= (da_{1} + \delta b_{1} + h_{1})\wedge (da_{2} + \delta b_{2} + h_{2}) \notag \\
  &= da_{1}\wedge da_{2} + da_{1}\wedge( \delta b_{2} + h_{2}) + ( \delta b_{1} + h_{1})\wedge ( da_{2} + \delta b_{2} + h_{2}).
\end{align}
 Note that by lemma \ref{Hodge decomposition}, Sobolev embedding and H\"{o}lder inequality, every term except the first in the right hand side of \eqref{hodgeidentity} is indeed in $L^{1}.$
But the first term $da_{1}\wedge da_{2}$ is a wedge product of exact forms and we can use the notion of weak wedge product in such cases. Using that definition, we can now define
 $$ \left( v_{1} \wedge v_{2} \right)_{weak} := \left( da_{1}\wedge da_{2}\right)_{weak} + da_{1}
 \wedge( \delta b_{2} + h_{2}) + ( \delta b_{1} + h_{1})\wedge ( da_{2} + \delta b_{2} + h_{2}). $$

\noindent  Observe also that the regularity of $da_{i}$ depends on the regularity of $v_{i},$ whereas the improved regularity of 
$\delta b_{i} + h_{i}$ comes from the regularity of $dv_{i}.$ 
Suppose $v_{1} \in L^{q_{1}}\left(\Omega;\Lambda^{k_{1}}(\mathbb{R}^{n})\right)$ with  $dv_{1} \in  L^{p_{1}}\left(\Omega;\Lambda^{k_{1}+1}(\mathbb{R}^{n})\right)$
and $v_{2} \in L^{q_{2}}\left(\Omega;\Lambda^{k_{2}}(\mathbb{R}^{n})\right)$ with $dv_{2} \in  L^{p_{2}}\left(\Omega;\Lambda^{k_{2}+1}(\mathbb{R}^{n})\right),$ where 
$1 < q_{1}, q_{2}, p_{1}, p_{2} < \infty,$ $\displaystyle \frac{1}{q_{1}} + \frac{1}{q_{2}} \leq 1 ,$ $\displaystyle \frac{1}{p_{1}} + \frac{1}{p_{2}} \leq 1 + \frac{1}{n}, $ and 
$p_{i} \geq \frac{nq_{1}}{n+ q_{i}} $ for $i=1,2.$ Then we have 
$da_{1}\wedge da_{2} \in L^{1}$ and we obtain $$ \left( da_{1}\wedge da_{2}\right)_{weak} = da_{1}\wedge da_{2} \qquad \text{ in }
\mathcal{D}'(\Omega;\Lambda^{ k_{1} + k_{2} }(\mathbb{R}^n)) .$$
But since $\displaystyle \frac{1}{p_{1}} + \frac{1}{p_{2}} \leq 1 + \frac{1}{n}, $ all other terms are in $L^{1}$ as before. Thus, we obtain,
$$  v_{1} \wedge v_{2} = \left( v_{1} \wedge v_{2} \right)_{weak} \quad \text{ in }
\mathcal{D}'(\Omega;\Lambda^{ k_{1} + k_{2} }(\mathbb{R}^n)) .$$ 

All of these can be done for the general case. If $\boldsymbol{p}$ is an admissible Sobolev exponent, 
then given $\boldsymbol{\omega} \in W^{d,\boldsymbol{p}}(\Omega; \boldsymbol{\Lambda^{k}}),$ we can define the distribution 
\begin{align*}
 \left( \boldsymbol{\omega}^{\boldsymbol{\alpha}} \right)_{weak} = \left( \left(\boldsymbol{\omega}_{exact}\right)^{\boldsymbol{\alpha}} \right)_{weak} 
 &+\text{ all other terms in the } \\
 &\begin{aligned}[t]
   \text{ formal expansion of }
 \left( \boldsymbol{\omega}_{exact} \right.&\left. + \boldsymbol{\omega}_{coexact}  + \boldsymbol{\omega}_{har} \right)^{\boldsymbol{\alpha}} \\ &\text{ in } 
 \mathcal{D}'(\Omega;\Lambda^{ \lvert \boldsymbol{k\alpha } \rvert }(\mathbb{R}^n)).
  \end{aligned}                                                                                                                                               
 \end{align*}
Using this definition, we can prove the following result, due to Iwaniec \cite{Iwaniecnonlinearcommutator}, which is 
a generalization of the classical `div-curl' lemma or `compensated compactness' lemma of Murat \cite{Muratsufficientnecessarycompensated} and Tartar \cite{TaCC}.
\begin{theorem}\label{weakcontgeneral}
 Let $\Omega\subset\mathbb{R}^n$ be open, bounded and smooth. Let $\boldsymbol{k},\boldsymbol{\alpha}$ be given. 
 Let $\boldsymbol{p}$ be an admissible Sobolev exponent such that $\displaystyle 1 + \frac{1}{n} > \frac{1}{\theta}.$
\begin{itemize}
 \item[(i)]  Let $\boldsymbol{\xi_{s}} \boldsymbol{\rightharpoonup} \boldsymbol{\xi} $ in $W^{d,\boldsymbol{p}}(\Omega; \boldsymbol{\Lambda^{k}}).$ 
Then 
$$ \left( \boldsymbol{\xi_{s}^{\alpha}} \right)_{weak} \boldsymbol{\rightharpoonup} \left( \boldsymbol{\xi^{\alpha}}\right)_{weak} \quad \text{ in }
\mathcal{D}'(\Omega;\Lambda^{ \lvert \boldsymbol{k\alpha } \rvert }(\mathbb{R}^n)). $$
Moreover, if $\displaystyle 1 \geq \frac{1}{\theta},$ then 
$$ \boldsymbol{\xi_{s}^{\alpha}}  \boldsymbol{\rightharpoonup}  \boldsymbol{\xi^{\alpha}} \quad \text{ in }
\mathcal{D}'(\Omega;\Lambda^{ \lvert \boldsymbol{k\alpha } \rvert }(\mathbb{R}^n)). $$
If $\displaystyle 1 > \frac{1}{\theta},$ then we also have,
$$ \boldsymbol{\xi_{s}^{\alpha}}  \boldsymbol{\rightharpoonup}  \boldsymbol{\xi^{\alpha}} \quad \text{ in }
L^{\theta}(\Omega;\Lambda^{ \lvert \boldsymbol{k\alpha } \rvert }(\mathbb{R}^n)). $$
\item[(ii)]Let $\boldsymbol{q}$ be an admissible H\"{o}lder exponent such that $(\boldsymbol{p},\boldsymbol{q})$ is an associated  
compact exponent pair. Let $\boldsymbol{\xi_{s}} \boldsymbol{\rightharpoonup} \boldsymbol{\xi} $ in $L^{\boldsymbol{q}}(\Omega; \boldsymbol{\Lambda^{k}})$ and  
$\boldsymbol{d\xi_{s}} \boldsymbol{\rightharpoonup} \boldsymbol{d\xi} $ in $L^{\boldsymbol{p}}(\Omega; \boldsymbol{\Lambda^{k+1}}).$ Then 
$$ \boldsymbol{\xi_{s}^{\alpha}}  \boldsymbol{\rightharpoonup}  \boldsymbol{\xi^{\alpha}} \quad \text{ in }
\mathcal{D}'(\Omega;\Lambda^{ \lvert \boldsymbol{k\alpha } \rvert }(\mathbb{R}^n)). $$
If $\displaystyle 1 > \frac{1}{\rho},$ then we also have,
$$ \boldsymbol{\xi_{s}^{\alpha}}  \boldsymbol{\rightharpoonup}  \boldsymbol{\xi^{\alpha}} \quad \text{ in }
L^{\rho}(\Omega;\Lambda^{ \lvert \boldsymbol{k\alpha } \rvert }(\mathbb{R}^n)). $$
\end{itemize}
\end{theorem}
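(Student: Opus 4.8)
The plan is to reduce everything to proving convergence in $\mathcal{D}'(\Omega;\Lambda^{\lvert\boldsymbol{k\alpha}\rvert}(\mathbb{R}^n))$, the remaining assertions being soft. Once $\left(\boldsymbol{\xi_{s}^{\alpha}}\right)_{weak}\boldsymbol{\rightharpoonup}\left(\boldsymbol{\xi^{\alpha}}\right)_{weak}$ in $\mathcal{D}'$ is known, then: if $1\geq\frac{1}{\theta}$ (resp. $1\geq\frac{1}{\rho}$ in (ii)) the improved integrability forces every term occurring in the definition of $\left(\cdot\right)_{weak}$ to be a genuine $L^{1}_{loc}$ function coinciding with the honest pointwise wedge power — this is Remark \ref{weakwedgeexactremark} for the purely exact part and an elementary H\"older--Sobolev check for the mixed terms — so $\boldsymbol{\xi_{s}^{\alpha}}\boldsymbol{\rightharpoonup}\boldsymbol{\xi^{\alpha}}$; and if moreover $1>\frac{1}{\theta}$ (resp. $1>\frac{1}{\rho}$), generalized H\"older applied to the $\alpha_{i}$ factors lying in $L^{p_{i}}$ (resp. $L^{q_{i}}$) shows $\{\boldsymbol{\xi_{s}^{\alpha}}\}$ is bounded in the reflexive space $L^{\theta}$ (resp. $L^{\rho}$), so a subsequence converges weakly there, its limit being $\boldsymbol{\xi^{\alpha}}$ by uniqueness of distributional limits, and a routine subsequence argument gives the full sequence.

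For the distributional statement I would Hodge-decompose each component via Lemma \ref{Hodge decomposition}, writing $\xi_{s,i}=da_{s,i}+\xi_{s,i,coexact}+\xi_{s,i,har}$ with the norms of $a_{s,i}$ and $\xi_{s,i,har}$ controlled by $\lVert\xi_{s,i}\rVert$ and that of $\xi_{s,i,coexact}$ by $\lVert d\xi_{s,i}\rVert_{p_{i}}$. Since the Hodge projectors are bounded linear operators, $da_{s,i}\rightharpoonup da_{i}$ in $L^{p_{i}}$ (resp. $L^{q_{i}}$), $\xi_{s,i,coexact}\rightharpoonup\xi_{i,coexact}$ in $W^{1,p_{i}}$, $a_{s,i}\rightharpoonup a_{i}$ in the relevant $W^{1,r}$, and the harmonic parts are bounded in $C^{\infty}_{loc}$. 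The crucial point is that the \emph{strict} inequality $1+\frac{1}{n}>\frac{1}{\theta}$ (and, in (ii), the strictness built into the associated compact exponent pair condition) makes the embedding $W^{1,p_{i}}\hookrightarrow L^{\mu_{i}}$ — with $\frac{1}{\mu_{i}}=1-\frac{1}{\theta}+\frac{1}{p_{i}}$, the exponent appearing in Lemma \ref{telescopicexactwithoutinfty} — and the analogous embeddings for the primitives $a_{s,i}$ \emph{compact}; hence, along a subsequence, $\xi_{s,i,coexact}\to\xi_{i,coexact}$ in $L^{\mu_{i}}$, $a_{s,i}\to a_{i}$ strongly in a suitable Lebesgue space, and the harmonic parts converge locally uniformly with all derivatives.

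Now expand $\boldsymbol{\xi_{s}^{\alpha}}$ into the purely exact wedge monomial $\left(\boldsymbol{\xi}_{s,exact}\right)^{\boldsymbol{\alpha}}$ plus the \emph{mixed} monomials, each containing at least one coexact or harmonic factor; by the very definition of $\left(\boldsymbol{\xi^{\alpha}}\right)_{weak}$ for general forms, $\left(\boldsymbol{\xi_{s}^{\alpha}}\right)_{weak}$ equals $\left(\left(\boldsymbol{\xi}_{s,exact}\right)^{\boldsymbol{\alpha}}\right)_{weak}$ plus the sum of these mixed monomials, which — thanks to the $-\frac{1}{n}$ integrability gain of the coexact factor together with $\frac{1}{\theta}<1+\frac{1}{n}$ — are honest $L^{s}_{loc}$ functions with $s>1$. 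The purely exact part is handled by Theorem \ref{weakcontexact}(i) (in case (ii) by rerunning its proof with the $\boldsymbol{q}$-data and the compact embeddings of the compact exponent pair). For each mixed monomial I would write its difference with the corresponding monomial for $\boldsymbol{\xi}$ as a telescoping sum, replacing one factor at a time: a term whose changed factor is a coexact or harmonic part contains $\xi_{s,i,coexact}-\xi_{i,coexact}$ (or the harmonic analogue) without a derivative and hence tends to $0$ in $L^{\mu_{i}}$, all other factors being bounded in their natural $L$-spaces, so H\"older closes the estimate; a term whose changed factor is an exact part $d(a_{s,i}-a_{i})$ is integrated by parts — legitimate because the remaining exact factors are closed — to move the $d$ onto the test form, turning the difference into $a_{s,i}-a_{i}$, which tends to $0$ strongly, and this ``derivative-to-primitive'' exchange is iterated until no difference factor carries a derivative. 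Summing the finitely many contributions and applying H\"older with the exponents $\mu_{i}$ concludes; this is precisely the general-forms analogue of Lemma \ref{telescopicexactwithoutinfty}.

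The genuine difficulty lies in this last step: a wedge product of weakly — but not strongly — convergent exact forms is \emph{not} sequentially weakly continuous, so the naive ``weak times strong'' argument breaks down as soon as a monomial carries two or more exact factors. The remedy is the compensated-compactness/div--curl mechanism already underpinning the definition of the weak wedge product, namely trading a weakly convergent factor $d\omega$ for the strongly convergent primitive $\omega$ by integration by parts, iterated as needed; executing this while tracking the Sobolev exponents so that every H\"older pairing stays subcritical — which is exactly where $1+\frac{1}{n}>\frac{1}{\theta}$ (resp. the strictness in the associated compact exponent pair) enters — is the heart of the proof. No endpoint subtleties arise, since $\boldsymbol{p}$ being an admissible Sobolev exponent means all $p_{i}>1$, and in (ii) the harmonic-type factors are controlled in $L^{\infty}_{loc}$.
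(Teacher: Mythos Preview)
The paper does not actually supply a proof of this theorem: it states the result, attributes it to Iwaniec, and relies on the machinery assembled immediately before it (Lemma \ref{Hodge decomposition}, the definition of the weak wedge product for general forms via Hodge decomposition, the telescopic estimate Lemma \ref{telescopicexactwithoutinfty}, and Theorem \ref{weakcontexact} for the purely exact part). Your proposal is exactly the argument that this machinery is designed to carry out --- Hodge-decompose each factor, invoke Theorem \ref{weakcontexact} for the purely exact monomial, and treat the mixed monomials by telescoping together with the strong convergence of coexact/harmonic pieces and primitives furnished by the strict Sobolev inequalities --- so it is both correct and faithful to the paper's intended route.

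One small clarification worth making when you write it out: in your integration-by-parts step for a mixed monomial whose changed factor is exact, the phrase ``legitimate because the remaining exact factors are closed'' is slightly misleading. The remaining product $\Theta$ may contain a coexact factor $\xi_{j,coexact}$, which is \emph{not} closed; integrating by parts therefore produces an extra term with $d\Theta$, in which one coexact factor $\xi_{j,coexact}$ gets replaced by $d\xi_{j}$. This extra term is harmless --- $d\xi_{j}$ is bounded in $L^{p_{j}}$ and the factor $a_{s,i}-a_{i}$ still converges strongly --- but you should account for it explicitly rather than suggest it vanishes.
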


\section{Existence of minimizers}\label{existencetheorems}
In this section, we discuss existence theorems for minimization problems. But first we begin by showing that unlike the classical calculus of variations, here in general 
we can not always expect a minimizer to exist if the integrand depends explicitly on $\boldsymbol{\omega}.$

\subsection{Nonexistence results}
Even when 
the explicit dependence on $\omega$ is a convex, additive term, we have the following counterexample already for $m=1$, as soon as $k \geq 2.$ 
\begin{proposition}[Counterexample to existence of minimizer]\label{failure of existence}
Let  $n \geq 2.$ Also let $2 \leq k \leq n$ and let $\Omega \subset\mathbb{R}^n$ be open, bounded and smooth and contractible. 
Then for any $\omega_{0} \in W^{1,2}(\Omega;\Lambda^{k-1})$ with $\nu\wedge\omega_{0} = 0$  but $\omega_{0} \neq 0$ on $\partial \Omega ,$ the problem  
\[
 \inf\left\{  I(\omega) =\frac{1}{2}\int_{\Omega}\lvert d\omega \rvert^{2} +\frac{1}{2}\int_{\Omega} \lvert \omega \rvert^{2}  :\omega \in 
 \omega_{0}+W_{0}^{1,2}\left(  \Omega;\Lambda^{k-1}\right)  \right\}  =m,
\]
does not admit a minimizer.
\end{proposition}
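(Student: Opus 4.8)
The plan is to argue by contradiction, enlarging the admissible class so that a putative minimizer becomes an admissible test direction in its own Euler--Lagrange equation, which will force it to vanish identically and contradict the boundary datum $\omega_{0}\neq 0$ on $\partial\Omega$. The structural fact that makes this work is that
$I(\omega)=\tfrac12\lVert d\omega\rVert_{L^{2}}^{2}+\tfrac12\lVert\omega\rVert_{L^{2}}^{2}=\tfrac12\lVert\omega\rVert_{W^{d,2}}^{2}$
is (half of) the squared Hilbert norm of $W^{d,2}(\Omega;\Lambda^{k-1})$; in particular $I$ is continuous for the $W^{d,2}$-topology, and its Gâteaux derivative at $\omega$ in a direction $\varphi$ is $\langle d\omega,d\varphi\rangle_{L^{2}}+\langle\omega,\varphi\rangle_{L^{2}}$.

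First I would replace the affine space $\omega_{0}+W_{0}^{1,2}(\Omega;\Lambda^{k-1})$ by the larger one $\omega_{0}+W_{T}^{d,2}(\Omega;\Lambda^{k-1})$. Since $\nu\wedge\omega_{0}=0$ on $\partial\Omega$ and $\omega_{0}\in W^{1,2}\subset W^{d,2}$, we have $\omega_{0}\in W_{T}^{d,2}$, so $\omega_{0}+W_{0}^{1,2}\subset\omega_{0}+W_{T}^{d,2}$. Invoking the density of $C_{c}^{\infty}(\Omega;\Lambda^{k-1})$ — and hence of $W_{0}^{1,2}(\Omega;\Lambda^{k-1})$ — in $W_{T}^{d,2}(\Omega;\Lambda^{k-1})$ for the $W^{d,2}$-norm (cf. \cite{CsatoDacKneuss}), together with the $W^{d,2}$-continuity of $I$, one gets $\inf_{\omega_{0}+W_{0}^{1,2}}I=\inf_{\omega_{0}+W_{T}^{d,2}}I=m$. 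Consequently, if the stated problem had a minimizer $\omega^{\ast}\in\omega_{0}+W_{0}^{1,2}$, then $I(\omega^{\ast})=m$ with $\omega^{\ast}\in\omega_{0}+W_{T}^{d,2}$, so $\omega^{\ast}$ would also minimize $I$ over the affine space $\omega_{0}+W_{T}^{d,2}$; therefore $\langle d\omega^{\ast},d\varphi\rangle_{L^{2}}+\langle\omega^{\ast},\varphi\rangle_{L^{2}}=0$ for every $\varphi\in W_{T}^{d,2}$. Because $\omega^{\ast}$ itself belongs to $W_{T}^{d,2}$, I may choose $\varphi=\omega^{\ast}$, which yields $\lVert d\omega^{\ast}\rVert_{L^{2}}^{2}+\lVert\omega^{\ast}\rVert_{L^{2}}^{2}=0$, i.e.\ $\omega^{\ast}=0$ a.e.\ in $\Omega$. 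But then $\omega_{0}=\omega_{0}-\omega^{\ast}\in W_{0}^{1,2}(\Omega;\Lambda^{k-1})$, so the trace of $\omega_{0}$ on $\partial\Omega$ vanishes, contradicting $\omega_{0}\neq 0$ on $\partial\Omega$. This establishes non-existence of a minimizer. (The same density argument, noting $0\in\omega_{0}+W_{T}^{d,2}$, in fact shows $m=0$, though this is not needed.)

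The only non-elementary ingredient, and the step I expect to require care, is the density of $C_{c}^{\infty}$ in $W_{T}^{d,2}$ for the $W^{d,2}$-norm together with the compatibility of the various trace notions; I would quote these from the literature on $W^{d,p}$-spaces rather than reprove them. It is worth recording where the hypotheses enter: $k\geq 2$ is essential because for $k=1$ one has $W^{d,2}=W^{1,2}$ and the condition $\nu\wedge\omega_{0}=0$ already forces $\omega_{0}=0$ on $\partial\Omega$, so the hypotheses would be vacuous; smoothness of $\partial\Omega$ is used for the trace theory and the density lemma; and contractibility of $\Omega$ is not actually used in this argument. Conceptually, the non-existence reflects the gap between $W^{d,2}$, on which $I$ is coercive, and $W^{1,2}$: a minimizing sequence converges to $0$ in $W^{d,2}$ while retaining boundary trace $\omega_{0}$, so its full gradients concentrate near $\partial\Omega$ and it possesses no $W^{1,2}$-limit in the admissible class — precisely the gauge-invariance / lack-of-Sobolev-inequality phenomenon discussed in the introduction.
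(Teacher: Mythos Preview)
Your argument is correct and reaches the same contradiction as the paper, but by a genuinely different route. The paper argues as follows: assuming a minimizer $\alpha\in\omega_{0}+W_{0}^{1,2}$ exists, it writes the Euler--Lagrange identity for test directions $\phi\in W_{0}^{1,2}$; testing with $\phi=d\theta$, $\theta\in C_{c}^{\infty}(\Omega;\Lambda^{k-2})$, yields $\delta\alpha=0$. It then \emph{constructs}, for every $\psi\in W_{T}^{d,2}$, a decomposition $\psi=\phi+d\eta$ with $\phi\in W_{0}^{1,2}$ and $\eta\in W_{0}^{1,2}(\Omega;\Lambda^{k-2})$ by solving two first-order boundary value problems (this is where contractibility of $\Omega$ is invoked), and uses it to upgrade the Euler--Lagrange identity to all $\psi\in W_{T}^{d,2}$; from there $\alpha$ minimizes over $W_{\delta,T}^{d,2}$, whose unique minimizer is $0$. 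You bypass this PDE construction entirely by quoting the density of $C_{c}^{\infty}$ (hence of $W_{0}^{1,2}$) in $W_{T}^{d,2}$ for the $W^{d,2}$-norm, combined with the $W^{d,2}$-continuity of $I$, to identify the two infima directly; then the Euler--Lagrange equation over the linear space $W_{T}^{d,2}$ allows you to test with $\omega^{\ast}$ itself. Your approach is shorter, purely functional-analytic, and---as you correctly observe---does not use contractibility, so it actually proves a slightly stronger statement than the paper's version. The trade-off is that the paper's proof is self-contained modulo standard solvability of $d$-problems, whereas yours imports the density lemma for $W_{T}^{d,2}$ as a black box; that lemma is indeed available (e.g.\ in \cite{CsatoDacKneuss}), so this is a legitimate citation rather than a gap.
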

\begin{proof} Suppose the problem admits a minimizer $\alpha \in \omega_{0}+W_{0}^{1,2}\left(  \Omega;\Lambda^{k-1}\right).$ Then $\alpha$
satisfies the weak form of the Euler-Lagrange equation, i.e 
\begin{equation*}
   \int_{\Omega} \langle d\alpha, d\phi \rangle +  \int_{\Omega} \langle \alpha, \phi \rangle = 0 \qquad \text{ for all } \phi \in W_{0}^{1,2}\left(\Omega; \Lambda^{k-1}\right).
\end{equation*} 
Choosing $\phi = d\theta$ for some $\theta \in C_{c}^{\infty}\left(  \Omega;\Lambda^{k-2}\right),$ we see immediately that this implies $\delta\alpha = 0$ in distributions.
Now for any $\psi \in W_{T}^{d,2}\left(\Omega; \Lambda^{k-1}\right),$ there exist $\phi \in W_{0}^{1,2}\left(\Omega; \Lambda^{k-1}\right)$ and 
$\eta \in W_{0}^{1,2}\left(\Omega; \Lambda^{k-2}\right)$ such that 
$$ \psi = \phi + d\eta .$$ Indeed, since $\Omega$ is contractible, we can solve the following two problems one after another (see e.g Theorem 8.16 in \cite{CsatoDacKneuss}).
\begin{align*}
\left\lbrace \begin{aligned}
                d \phi &= d\psi &&\text{ in } \Omega, \\
                \phi &= 0 &&\text{  on } \partial\Omega.
               \end{aligned} \right.  \quad \text{ and } \quad \left\lbrace \begin{aligned}
                d \eta &= \psi -\phi &&\text{ in } \Omega, \\
                \eta &= 0 &&\text{  on } \partial\Omega.
               \end{aligned} \right.\end{align*}
This gives the desired decomposition. Thus,  we have, 
\begin{equation*}
   \int_{\Omega} \langle d\alpha, d\psi \rangle +  \int_{\Omega} \langle \alpha, \psi \rangle = 
   0 \qquad \text{ for all } \psi \in W_{T}^{d,2}\left(\Omega; \Lambda^{k-1}\right).
\end{equation*} 
But this implies $\alpha$ is also a minimizer of the problem 
\[
 \inf\left\{  I(\omega) =\frac{1}{2}\int_{\Omega}\lvert d\omega \rvert^{2} +\frac{1}{2}\int_{\Omega} \lvert \omega \rvert^{2}  :\omega \in 
 W_{\delta, T}^{d,2}\left(  \Omega;\Lambda^{k-1}\right)  \right\}  =m.
\]
But it is easy to show that the minimizer of this problem is unique and $0$ is a minimizer. Thus $\alpha = 0,$ which is impossible since $\omega_{0} \neq 0$ on $\partial\Omega.$ 
This concludes the proof.
\end{proof}
\begin{remark} This counterexample can easily be generalized for any $1 < p < \infty.$ Also note that the term depending on $d\omega$ is convex, thus ext. polyconvex and ext. 
quasiconvex as well.  
\end{remark}
\subsection{Existence theorems}
In view of the previous subsection, we can expect general existence theorems to hold only when the explicit dependence on $\boldsymbol{\omega}$ is rather special, if any. We now 
show that an additive term which is linear in $\boldsymbol{\omega},$ still allows fairly general existence results. We start with a lemma. 
\begin{lemma}\label{lemmaforhodge}
 Let $\boldsymbol{p} = (p_1,\ldots, p_m)$ where $1 < p_i < \infty$ for all $1 \leq i \leq m .$ Let $\boldsymbol{\omega_{0}} \in 
W^{1,\boldsymbol{p}}\left(\Omega;\boldsymbol{\Lambda^{k -1}}\right)$ be given. Let $\lbrace \boldsymbol{\omega}^{s} \rbrace \subset \boldsymbol{\omega_{0}} + 
W^{d,\boldsymbol{p}}_{T}\left(\Omega;\boldsymbol{\Lambda^{k -1}}\right)$ be a sequence such that 
$\lVert \boldsymbol{d\omega}^{s}\rVert_{L^{\boldsymbol{p}}\left(\Omega;\boldsymbol{\Lambda^{k}}\right)}$ is uniformly bounded. Then there exist 
$\boldsymbol{\omega} \in \boldsymbol{\omega_{0}} + W^{1,\boldsymbol{p}}_{0}\left(\Omega;\boldsymbol{\Lambda^{k -1}}\right),$
$\boldsymbol{\beta} \in \boldsymbol{\omega_{0}} + W^{1,\boldsymbol{p}}_{\boldsymbol{\delta}, T}\left(\Omega;\boldsymbol{\Lambda^{k -1}}\right)$ satisfying 
$$\boldsymbol{d\beta} =  \boldsymbol{d\omega} \quad \text{ in } \Omega, $$ and a sequence 
$\lbrace \boldsymbol{\beta}^{s} \rbrace \subset \boldsymbol{\omega_{0}} + 
W^{1,\boldsymbol{p}}_{\boldsymbol{\delta}, T}\left(\Omega;\boldsymbol{\Lambda^{k -1}}\right)$ such that 
\begin{gather*}
\boldsymbol{d\beta}^{s} =  \boldsymbol{d\omega}^{s} \quad \text{ in } \Omega,\text{ for every } s \\
\text{ and } \\ 
 \boldsymbol{\beta}^{s} \boldsymbol{\rightharpoonup} \boldsymbol{\beta} \text{ in } W^{d,\boldsymbol{p}}\left(\Omega;\boldsymbol{\Lambda^{k -1}}\right). 
\end{gather*}
\end{lemma}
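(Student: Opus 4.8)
The plan is to run the argument componentwise, since the spaces, the hypotheses and the conclusion all split over $i=1,\dots,m$. Fix $i$ and set $\psi^{s}_{i}:=\omega^{s}_{i}-\omega_{0,i}\in W^{d,p_{i}}_{T}(\Omega;\Lambda^{k_{i}-1})$, so that $\nu\wedge\psi^{s}_{i}=0$ on $\partial\Omega$ and, because $\omega_{0,i}\in W^{1,p_{i}}$, the norms $\lVert d\psi^{s}_{i}\rVert_{L^{p_{i}}}\le\lVert d\omega^{s}_{i}\rVert_{L^{p_{i}}}+\lVert d\omega_{0,i}\rVert_{L^{p_{i}}}$ are bounded uniformly in $s$. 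Applying Lemma~\ref{Hodge decomposition} and the remark following it to $\psi^{s}_{i}$ (whose tangential part vanishes), we obtain its Hodge decomposition; the coexact part $\gamma^{s}_{i}:=(\psi^{s}_{i})_{coexact}$ then satisfies $\delta\gamma^{s}_{i}=0$ and $\nu\wedge\gamma^{s}_{i}=0$, hence $\gamma^{s}_{i}\in W^{1,p_{i}}_{\delta,T}(\Omega;\Lambda^{k_{i}-1})$, moreover $d\gamma^{s}_{i}=d\psi^{s}_{i}$ (the exact and harmonic parts being closed) and $\lVert\gamma^{s}_{i}\rVert_{W^{1,p_{i}}}\le c\lVert d\psi^{s}_{i}\rVert_{L^{p_{i}}}$. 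Setting $\beta^{s}_{i}:=\omega_{0,i}+\gamma^{s}_{i}$ gives $\beta^{s}_{i}\in\omega_{0,i}+W^{1,p_{i}}_{\delta,T}$, $d\beta^{s}_{i}=d\omega_{0,i}+d\psi^{s}_{i}=d\omega^{s}_{i}$, and $\{\beta^{s}_{i}\}$ bounded in $W^{1,p_{i}}$.

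Since $1<p_{i}<\infty$, $W^{1,p_{i}}$ is reflexive, so after passing to a subsequence (not relabelled, and chosen common to the finitely many $i$) one has $\gamma^{s}_{i}\rightharpoonup\gamma_{i}$ in $W^{1,p_{i}}$; as $W^{1,p_{i}}_{\delta,T}$ is a closed, hence weakly closed, linear subspace, $\gamma_{i}\in W^{1,p_{i}}_{\delta,T}$. Put $\beta_{i}:=\omega_{0,i}+\gamma_{i}\in\omega_{0,i}+W^{1,p_{i}}_{\delta,T}$. Weak convergence in $W^{1,p_{i}}$ yields $\beta^{s}_{i}\rightharpoonup\beta_{i}$ in $L^{p_{i}}$ and $d\beta^{s}_{i}\rightharpoonup d\beta_{i}$ in $L^{p_{i}}$, i.e. $\boldsymbol{\beta}^{s}\boldsymbol{\rightharpoonup}\boldsymbol{\beta}$ in $W^{d,\boldsymbol{p}}(\Omega;\boldsymbol{\Lambda^{k-1}})$, with $d\beta^{s}_{i}=d\omega^{s}_{i}$ by construction.

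It remains to produce $\boldsymbol{\omega}\in\boldsymbol{\omega_{0}}+W^{1,\boldsymbol{p}}_{0}$ with $\boldsymbol{d\omega}=\boldsymbol{d\beta}$; componentwise this means finding $\rho_{i}\in W^{1,p_{i}}_{0}(\Omega;\Lambda^{k_{i}-1})$ with $d\rho_{i}=d\gamma_{i}$, and then taking $\omega_{i}:=\omega_{0,i}+\rho_{i}$. If $k_{i}=1$ the condition $\nu\wedge\gamma_{i}=0$ forces $\gamma_{i}=0$ on $\partial\Omega$, so $\gamma_{i}\in W^{1,p_{i}}_{0}$ and $\rho_{i}:=\gamma_{i}$ works. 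If $k_{i}\ge2$, note that $\nu\wedge\gamma_{i}=0$ on $\partial\Omega$ means the pullback of $\gamma_{i}$ to $\partial\Omega$ vanishes, so only its normal trace $\sigma_{i}:=\nu\lrcorner\gamma_{i}\in W^{1-1/p_{i},p_{i}}(\partial\Omega;\Lambda^{k_{i}-2})$ is nonzero; by the trace extension theorem choose $\eta_{i}\in W^{2,p_{i}}(\Omega;\Lambda^{k_{i}-2})$ with $\eta_{i}=0$ on $\partial\Omega$ and normal derivative prescribed there so that $\nu\wedge d\eta_{i}=0$ and $\nu\lrcorner d\eta_{i}=\sigma_{i}$ on $\partial\Omega$; then $\rho_{i}:=\gamma_{i}-d\eta_{i}\in W^{1,p_{i}}$ has vanishing trace on $\partial\Omega$, hence lies in $W^{1,p_{i}}_{0}$, and $d\rho_{i}=d\gamma_{i}$. (Equivalently one may invoke a Poincaré-type theorem with Dirichlet data, cf.~\cite{CsatoDacKneuss}: $d\gamma_{i}$ is closed, satisfies $\nu\wedge d\gamma_{i}=0$ on $\partial\Omega$, and $\int_{\Omega}\langle d\gamma_{i};\chi\rangle=0$ for all $\chi\in\mathcal{H}_{T}(\Omega;\Lambda^{k_{i}})$ by integration by parts using $\delta\chi=0$ in $\Omega$ and $\nu\wedge\gamma_{i}=0$ on $\partial\Omega$.) This yields $\boldsymbol{\omega}$ with all the required properties and completes the proof.

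The routine part is the Hodge decomposition plus weak compactness in the first two paragraphs, which functions precisely because every $p_{i}$ lies strictly between $1$ and $\infty$. The main obstacle is the third step: producing a representative with full zero Dirichlet data while keeping the prescribed exterior derivative is exactly the ``gauge fixing'' mechanism emphasized in the introduction, and it is where one must verify that correcting $\gamma_{i}$ by an exact form removes its (purely normal) boundary trace without changing $d\gamma_{i}$ — equivalently, that the obstruction to solving $d\rho_{i}=d\gamma_{i}$ inside $W^{1,p_{i}}_{0}$ vanishes because $\gamma_{i}$ is tangentially trivial on $\partial\Omega$.
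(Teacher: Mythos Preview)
Your proof is correct and follows essentially the same route as the paper's. Both arguments construct $\boldsymbol{\beta}^{s}$ by solving a div--curl boundary value problem (the paper states it directly; you obtain it as the coexact part in the Hodge decomposition of $\boldsymbol{\omega}^{s}-\boldsymbol{\omega_{0}}$ via Lemma~\ref{Hodge decomposition} and its remark), extract a weak limit by reflexivity, and then invoke a Poincar\'e--type result to produce $\boldsymbol{\omega}$ with full Dirichlet data; the paper simply asserts this last step, while you supply an explicit trace-extension construction together with the alternative reference to~\cite{CsatoDacKneuss}.
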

\begin{proof}
 First for every $s$, we find $\boldsymbol{\beta}^{s} \in \boldsymbol{\omega_{0}} + W^{1, \boldsymbol{p}}_{\boldsymbol{\delta}, T}(\Omega; \boldsymbol{\Lambda^{k}}),$
such that,
\begin{equation*}
   \left\lbrace \begin{aligned}
                \boldsymbol{d\beta}^{s} = \boldsymbol{d\omega}^{s}  \quad &\text{and} \quad  \boldsymbol{\delta \beta}^{s} = 0 &&\text{ in } \Omega, \\
                \nu\wedge \boldsymbol{\beta}^{s} = \nu\wedge &\boldsymbol{\omega}^{s} = \nu\wedge \boldsymbol{\omega_{0}} &&\text{  on } \partial\Omega,
                \end{aligned} 
                \right. 
                \end{equation*}
and there exist constants $c_{1},c_{2} > 0$ such that
$$ \lVert \boldsymbol{\beta}^{s} \rVert_{W^{1,\boldsymbol{p}}} \leq c_{1}\left\lbrace \lVert \boldsymbol{d\omega}^{s} \rVert_{L^{\boldsymbol{p}}} + 
\lVert \boldsymbol{\omega_{0}} \rVert_{W^{1,\boldsymbol{p}}}\right\rbrace \leq c_{2}.$$
Therefore, up to the extraction of a subsequence which we do not relabel, there exists 
$\boldsymbol{\beta} \in \boldsymbol{\omega_{0}} + W^{1,\boldsymbol{p}}_{\boldsymbol{\delta}, T}\left(\Omega;\boldsymbol{\Lambda^{k -1}}\right)$ such that 
$$ \boldsymbol{\beta}^{s} \boldsymbol{\rightharpoonup} \boldsymbol{\beta}\qquad \text{ in } W^{1,\boldsymbol{p}}\left(\Omega;\boldsymbol{\Lambda^{k -1}}\right).$$
Since $\nu\wedge \boldsymbol{\beta} = \nu\wedge \boldsymbol{\omega_{0}}$  on $\partial\Omega$, we can find $\boldsymbol{\omega} \in \boldsymbol{\omega_{0}} + W^{1,\boldsymbol{p}}_{0}\left(\Omega;\boldsymbol{\Lambda^{k -1}}\right)$ such that 
\begin{equation*}
   \left\lbrace \begin{aligned}
                \boldsymbol{d\omega} = \boldsymbol{d \beta}  \qquad &\text{ in } \Omega, \\
                \boldsymbol{\omega} = \boldsymbol{\alpha_{0}} \qquad &\text{  on } \partial\Omega.
                \end{aligned} 
                \right. 
                \end{equation*}
This concludes the proof.
\end{proof}

\subsubsection{Existence theorem for quasiconvex functions}
\begin{theorem}\label{Thm existence de minima ext vect}
 Let $\boldsymbol{p} = (p_1,\ldots, p_m)$ where $1 < p_i < \infty$ for all $1 \leq i \leq m .$
 Let $\Omega\subset\mathbb{R}^n$ be open, bounded, smooth. 
 Let $f:\Omega\times\boldsymbol{\Lambda^k}\rightarrow \mathbb{R}$ be a Carath\'eodory function, satisfying for a.e $x \in \Omega,$ 
 for every $\boldsymbol{\xi}= (\xi_{1}, \ldots, \xi_{m}) \in \boldsymbol{\Lambda^{k}},$
 \begin{equation}\label{growthforquasiexistence}
  \begin{gathered}[b]
  \boldsymbol{\xi} \mapsto f(x, \boldsymbol{\xi}) \text{ is vectorially ext. quasiconvex}, \\
  \gamma_{1}(x) + \sum_{i=1}^{m}\alpha_{1,i}\lvert \xi_{i} \rvert^{p_{i}} \leq f(x, \boldsymbol{\xi}) \leq \gamma_{2}(x) + \sum_{i=1}^{m}\alpha_{2,i}\lvert \xi_{i} \rvert^{p_{i}},
 \end{gathered}
 \end{equation}
where $\alpha_{2,i} \geq \alpha_{1,i} > 0$ for all $1 \leq i \leq m$ and $\gamma_{1}, \gamma_{2} \in L^{1}(\Omega).$ Let $\boldsymbol{g}
\in L^{\boldsymbol{p^{'}}}(\Omega; \boldsymbol{\Lambda^{k-1}})$ be such that
$\boldsymbol{\delta g} = 0$ in the sense of distributions and $\boldsymbol{\omega_{0}} \in W^{1,\boldsymbol{p}}\left(  \Omega;\boldsymbol{\Lambda^{k-1}}\right).$
Let \[
\mathcal{(P}_{0}\mathcal{)\quad}\inf\left\{   I(\boldsymbol{\omega}) =\int_{\Omega}\left[ f\left( x, 
\boldsymbol{d\omega}\right) + \langle \boldsymbol{g} ; \boldsymbol{\omega} \rangle \right]  :\boldsymbol{\omega} \in \boldsymbol{\omega_{0}}+W_{0}^{1,\boldsymbol{p}}
\left(  \Omega;\boldsymbol{\Lambda
^{k-1}}\right)  \right\}  =m.
\]
Then the problem $\mathcal{(\mathcal{P}}_{0}\mathcal{)}$ has a minimizer.
\end{theorem}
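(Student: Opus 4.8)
The plan is to use the direct method of the calculus of variations, combined with the gauge-fixing Lemma \ref{lemmaforhodge} to convert weak boundedness in $W^{d,\boldsymbol{p}}$ into weak convergence in $W^{1,\boldsymbol{p}}$. First I would check that the infimum $m$ is finite: the upper bound in \eqref{growthforquasiexistence} applied to $\boldsymbol{\omega_{0}}$ together with H\"older's inequality on the linear term $\langle \boldsymbol{g};\boldsymbol{\omega}\rangle$ shows $I(\boldsymbol{\omega_{0}}) < \infty$, so $m < \infty$; the lower bound shows $m > -\infty$ once one controls the linear term, which is done in the coercivity step below. Then take a minimizing sequence $\lbrace \boldsymbol{\omega}^{s}\rbrace \subset \boldsymbol{\omega_{0}} + W_{0}^{1,\boldsymbol{p}}(\Omega;\boldsymbol{\Lambda^{k-1}})$.

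The key steps, in order, are as follows. (1) \emph{Coercivity.} Using the lower bound in \eqref{growthforquasiexistence}, $I(\boldsymbol{\omega}^{s}) \geq \sum_{i} \alpha_{1,i}\lVert d\omega^{s}_{i}\rVert_{p_{i}}^{p_{i}} + \int_{\Omega}\gamma_{1} + \int_{\Omega}\langle \boldsymbol{g};\boldsymbol{\omega}^{s}\rangle$. The term $\int_{\Omega}\langle \boldsymbol{g};\boldsymbol{\omega}^{s}\rangle$ is the delicate one: since $\boldsymbol{\omega}^{s} - \boldsymbol{\omega_{0}} \in W_{0}^{1,\boldsymbol{p}}$, one would like to bound it by $\lVert \boldsymbol{\omega}^{s}\rVert_{W^{1,\boldsymbol{p}}}$, but a priori we only control $\lVert d\boldsymbol{\omega}^{s}\rVert_{L^{\boldsymbol{p}}}$. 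Here one uses $\boldsymbol{\delta g} = 0$: integrating by parts, $\int_{\Omega}\langle \boldsymbol{g};\boldsymbol{\omega}^{s} - \boldsymbol{\omega_{0}}\rangle$ can be rewritten using the Hodge/gauge decomposition so that it depends only on $\boldsymbol{d\omega}^{s}$ (pairing $\boldsymbol{g}$ against the exact part), giving $|\int_{\Omega}\langle \boldsymbol{g};\boldsymbol{\omega}^{s}\rangle| \leq C(1 + \lVert \boldsymbol{d\omega}^{s}\rVert_{L^{\boldsymbol{p}}})$; Young's inequality then absorbs this into the $\lVert d\omega^{s}_{i}\rVert_{p_{i}}^{p_{i}}$ terms, yielding a uniform bound $\sup_{s}\lVert \boldsymbol{d\omega}^{s}\rVert_{L^{\boldsymbol{p}}} < \infty$ and $m > -\infty$. (2) \emph{Gauge fixing.} Apply Lemma \ref{lemmaforhodge} to obtain $\boldsymbol{\omega} \in \boldsymbol{\omega_{0}} + W_{0}^{1,\boldsymbol{p}}$, $\boldsymbol{\beta} \in \boldsymbol{\omega_{0}} + W_{\boldsymbol{\delta},T}^{1,\boldsymbol{p}}$ with $\boldsymbol{d\beta} = \boldsymbol{d\omega}$, and a sequence $\boldsymbol{\beta}^{s}$ with $\boldsymbol{d\beta}^{s} = \boldsymbol{d\omega}^{s}$ and $\boldsymbol{\beta}^{s} \rightharpoonup \boldsymbol{\beta}$ in $W^{d,\boldsymbol{p}}$ (in fact weakly in $W^{1,\boldsymbol{p}}$). (3) \emph{Lower semicontinuity.} Since $f$ satisfies growth $(\mathcal{C}^{x}_{\boldsymbol{p}})$ with $r = m$ and $\boldsymbol{\xi}\mapsto f(x,\boldsymbol{\xi})$ vectorially ext. quasiconvex, Theorem \ref{semicontinuity with x dependence} applied to $\boldsymbol{\beta}^{s} \rightharpoonup \boldsymbol{\beta}$ gives $\liminf_{s}\int_{\Omega}f(x,\boldsymbol{d\beta}^{s}) \geq \int_{\Omega}f(x,\boldsymbol{d\beta}) = \int_{\Omega}f(x,\boldsymbol{d\omega})$; since $\boldsymbol{d\beta}^{s} = \boldsymbol{d\omega}^{s}$, the first integral equals $\int_{\Omega}f(x,\boldsymbol{d\omega}^{s})$. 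For the linear term, rewriting $\int_{\Omega}\langle\boldsymbol{g};\boldsymbol{\omega}^{s}\rangle$ in terms of $\boldsymbol{d\omega}^{s} = \boldsymbol{d\beta}^{s}$ as in step (1) and using $\boldsymbol{d\beta}^{s}\rightharpoonup \boldsymbol{d\beta}$ in $L^{\boldsymbol{p}}$ against $\boldsymbol{g}\in L^{\boldsymbol{p}'}$, one gets $\int_{\Omega}\langle\boldsymbol{g};\boldsymbol{\omega}^{s}\rangle \to \int_{\Omega}\langle\boldsymbol{g};\boldsymbol{\omega}\rangle$. Hence $\liminf_{s} I(\boldsymbol{\omega}^{s}) \geq I(\boldsymbol{\omega})$. (4) \emph{Conclusion.} Since $\boldsymbol{\omega} \in \boldsymbol{\omega_{0}} + W_{0}^{1,\boldsymbol{p}}$ is admissible and $\lbrace\boldsymbol{\omega}^{s}\rbrace$ is minimizing, $I(\boldsymbol{\omega}) \leq m \leq \liminf_{s} I(\boldsymbol{\omega}^{s}) \leq I(\boldsymbol{\omega})$, so $\boldsymbol{\omega}$ is a minimizer.

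The main obstacle is step (1)/(3): handling the linear term $\int_{\Omega}\langle \boldsymbol{g};\boldsymbol{\omega}\rangle$ despite coercivity being available only in $W^{d,\boldsymbol{p}}$ rather than $W^{1,\boldsymbol{p}}$. This is exactly where the hypothesis $\boldsymbol{\delta g} = 0$ is essential: it makes the linear functional $\boldsymbol{\omega}\mapsto \int_{\Omega}\langle\boldsymbol{g};\boldsymbol{\omega}\rangle$ factor (modulo a fixed constant coming from $\boldsymbol{\omega_{0}}$ and boundary terms) through the exterior derivative $\boldsymbol{d\omega}$, so that it is both coercivity-compatible and weakly continuous along the gauge-fixed sequence. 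Care must be taken with the boundary contributions in the integration by parts, using $\nu\wedge\boldsymbol{\beta}^{s} = \nu\wedge\boldsymbol{\omega_{0}}$ on $\partial\Omega$ from Lemma \ref{lemmaforhodge}; everything else is a routine application of the direct method together with the semicontinuity theorem already proved.
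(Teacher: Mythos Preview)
Your proposal is correct and follows essentially the same route as the paper: use the coercivity coming from the lower growth bound to control $\lVert \boldsymbol{d\omega}^{s}\rVert_{L^{\boldsymbol{p}}}$, apply the gauge-fixing Lemma \ref{lemmaforhodge}, and invoke Theorem \ref{semicontinuity with x dependence}. The only organizational difference is in the handling of the linear term. The paper carries out your ``rewriting'' once and for all as a preliminary reduction: it solves $\boldsymbol{\delta G}=\boldsymbol{g}$, $\boldsymbol{dG}=0$, $\nu\wedge\boldsymbol{G}=0$ on $\partial\Omega$, integrates by parts to get $\int_{\Omega}\langle\boldsymbol{g};\boldsymbol{\omega}\rangle = -\int_{\Omega}\langle\boldsymbol{G};\boldsymbol{d\omega}\rangle + \int_{\partial\Omega}\langle\nu\lrcorner\boldsymbol{G};\boldsymbol{\omega_{0}}\rangle$, and then absorbs $-\langle\boldsymbol{G};\cdot\rangle$ into $f$ (which preserves all hypotheses, cf.\ Remark \ref{remarkforlinearlowerordervect}(iii)), thereby reducing to the case $\boldsymbol{g}=0$. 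Your approach keeps the linear term separate and deals with it twice (coercivity and weak continuity), which works but is slightly less economical; one small imprecision is that the weak convergence in step (3) is of $\boldsymbol{d\omega}^{s}$ against the potential $\boldsymbol{G}\in L^{\boldsymbol{p}'}$, not against $\boldsymbol{g}$ itself.
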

\begin{remark}\label{remarkforlinearlowerordervect}
\begin{enumerate}
 \item[(i)] If $k_{i}=1$ for some $i \in \lbrace 1,\ldots, m \rbrace , $ the condition $\delta g_{i} = 0$ in the sense of distributions, is automatically satisfied for all $g_{i} \in L^{p_{i}'}(\Omega)$ and hence is not a restriction.
 \item[(ii)] However, as soon as $k_{i} \geq 2 $ for some $i \in \lbrace 1,\ldots, m \rbrace , $ $g_{i}$ being coclosed is a non-trivial restriction and the 
 theorem does not hold without this assumption. In fact, we can show 
 that if $\mathcal{(\mathcal{P}}_{0} \mathcal{)}$ admits a minimizer and $2 \leq k_{i} \leq n$ for some $i \in \lbrace 1,\ldots, m \rbrace , $ then we must have 
 $\delta g_{i} = 0 $ in the sense of distributions. Indeed, suppose $ \boldsymbol{\omega}\in\boldsymbol{\omega_{0}} +W_{0}^{1,\boldsymbol{p}}\left(  \Omega;\boldsymbol{\Lambda
^{k-1}}\right) $ is a minimizer for $\mathcal{(\mathcal{P}}_{0}\mathcal{)}.$ Now if $\delta g_{i} \neq 0 ,$ since $k_{i} \geq 2,$ there exists a $\theta \in C_{c}^{\infty}(\Omega; \Lambda^{k-2})$ such that
$\displaystyle \int_{\Omega} \langle g_{i} ; d \theta \rangle < 0.$
Define $\boldsymbol{\theta}= (\theta_{1}, \ldots, \theta_{m})$ such that for all $1 \leq j \leq m,$
\begin{align*}
 \theta_{j} = \left\lbrace \begin{aligned}
                            &\theta \quad \text{ if } i=j,\\
                            &0 \quad \text{ otherwise}.
                           \end{aligned}\right.
\end{align*}
Then $\boldsymbol{\omega} + \boldsymbol{d\theta}\in\boldsymbol{\omega_{0}} +W_{0}^{1,\boldsymbol{p}}\left(  \Omega;\boldsymbol{\Lambda
^{k-1}}\right) $ and we have,
\begin{align*}
  I(\boldsymbol{\omega} + \boldsymbol{d\theta)})= \int_{\Omega}\left[ f\left( x, 
\boldsymbol{d\omega}\right) + \langle \boldsymbol{g} ; \boldsymbol{\omega} \rangle \right] + \int_{\Omega} \langle g_{i} ; d \theta \rangle  < m,
\end{align*}
which is impossible since $\boldsymbol{\omega}$ is a minimizer. 
\item[(iii)] Note that if $f:\Omega\times\boldsymbol{\Lambda^{k}}\rightarrow\mathbb{R}$ satisfies the hypotheses of the theorem for some $\boldsymbol{p},$ 
then for any $\boldsymbol{G} \in L^{\boldsymbol{p'}}\left( \Omega ; \boldsymbol{\Lambda^{k}}\right),$ 
the function $F:\Omega\times\boldsymbol{\Lambda^{k}}\rightarrow\mathbb{R}$, defined by,
$$ F(x, \boldsymbol{\xi}) = f(x,\boldsymbol{\xi}) + \langle \boldsymbol{G} ; \boldsymbol{\xi} \rangle  \quad \text{ for every } \boldsymbol{\xi} \in \boldsymbol{\Lambda^{k}}, $$
also satisfies all the hypotheses with the same $\boldsymbol{p}$. 
\end{enumerate}
\end{remark}
\begin{proof}
 \emph{Step 1} First we show that we can assume $\boldsymbol{g}=0.$ Since $\boldsymbol{g}
\in L^{\boldsymbol{p^{'}}}(\Omega; \boldsymbol{\Lambda^{k-1}})$ satisfies
$\boldsymbol{\delta g} = 0$ in the sense of distributions, we can find $\boldsymbol{G} \in W^{1, \boldsymbol{p^{'}}}(\Omega; \boldsymbol{\Lambda^{k}}),$
such that,
\begin{equation*}
   \left\lbrace \begin{aligned}
                \boldsymbol{dG} = 0  \quad &\text{and} \quad  \boldsymbol{\delta G} = \boldsymbol{g} &&\text{ in } \Omega, \\
                \nu\wedge \boldsymbol{G} &= 0 &&\text{  on } \partial\Omega.
                \end{aligned} 
                \right. 
                \end{equation*}
Thus, for any $\boldsymbol{\omega} \in \boldsymbol{\omega_{0}}+W_{0}^{1,\boldsymbol{p}}
\left(  \Omega;\boldsymbol{\Lambda^{k-1}}\right),$ we have,
\begin{align*}
 \int_{\Omega} \langle \boldsymbol{g} ; \boldsymbol{\omega} \rangle = \int_{\Omega} \langle \boldsymbol{\delta G} ; \boldsymbol{\omega} \rangle  
= - \int_{\Omega} \langle  \boldsymbol{G} ; \boldsymbol{d\omega} \rangle
+ \int_{\partial\Omega} \langle \nu\lrcorner \boldsymbol{G} ; \boldsymbol{\omega_{0}} \rangle .
\end{align*}
Given $\boldsymbol{\omega_{0}}\in
W^{1,\boldsymbol{p}}\left(  \Omega;\boldsymbol{\Lambda^{k-1}}\right) $ and $\boldsymbol{g} \in L^{\boldsymbol{p^{'}}}(\Omega; \boldsymbol{\Lambda^{k-1}})$, 
$\int_{\partial\Omega} \langle \nu\lrcorner \boldsymbol{G} ; \boldsymbol{\omega_{0}} \rangle$ is just a real number which does not matter for minimization.
Now the claim follows from remark \ref{remarkforlinearlowerordervect}(iii).\smallskip

\emph{Step 2} By step $1$, we assume from now on that $\boldsymbol{g}=0.$ Let $\lbrace \boldsymbol{\omega}^{s} \rbrace$ be a minimizing sequence of 
$\mathcal{(\mathcal{P}}_{0}\mathcal{)}$. By the growth condition \eqref{growthforquasiexistence}, there exists a constant $c>0$ such that 
$$\lVert \boldsymbol{d\omega}^{s}\rVert_{L^{\boldsymbol{p}}\left(\Omega;\boldsymbol{\Lambda^{k}}\right)} \leq c .$$ Hence by lemma \ref{lemmaforhodge}, there exist maps 
$\boldsymbol{\omega} \in \boldsymbol{\omega_{0}} + W^{1,\boldsymbol{p}}_{0}\left(\Omega;\boldsymbol{\Lambda^{k -1}}\right)$ and 
$\boldsymbol{\beta} \in \boldsymbol{\omega_{0}} + W^{1,\boldsymbol{p}}_{T}\left(\Omega;\boldsymbol{\Lambda^{k -1}}\right)$ satisfying 
$$\boldsymbol{d\beta} =  \boldsymbol{d\omega} \quad \text{ in } \Omega, $$ and a sequence 
$\lbrace \boldsymbol{\beta}^{s} \rbrace \subset \boldsymbol{\omega_{0}} + 
W^{1,\boldsymbol{p}}_{T}\left(\Omega;\boldsymbol{\Lambda^{k -1}}\right)$ such that 
\begin{gather*}
\boldsymbol{d\omega}^{s} =  \boldsymbol{d\beta}^{s} \quad \text{ in } \Omega,\text{ for every } s \\
\text{ and } \\ 
 \boldsymbol{\beta}^{s} \boldsymbol{\rightharpoonup} \boldsymbol{\beta} \text{ in } W^{d,\boldsymbol{p}}\left(\Omega;\boldsymbol{\Lambda^{k -1}}\right). 
\end{gather*}
Using theorem \ref{semicontinuity with x dependence}, we obtain,
\begin{align*}
 m=\underset{s\rightarrow\infty}{\lim\inf}\int_{\Omega}f\left( x,  \boldsymbol{d\omega}^{s}\right)  =\underset{s\rightarrow\infty}{\lim\inf}\int_{\Omega}f\left( x,
\boldsymbol{d\beta}^{s}\right)  &\geq\int_{\Omega}f\left( x, \boldsymbol{d\beta}\right)  \\&=\int_{\Omega
}f\left( x, \boldsymbol{d\omega}\right)  \geq m.
\end{align*}
This concludes the proof of the theorem.
\end{proof}
\begin{remark}
 It is easy to see that $\boldsymbol{\beta}$ in the proof of theorem \ref{Thm existence de minima ext vect} is a minimizer to the problem 
\[
\mathcal{(P}_{\boldsymbol{\delta},T}\mathcal{)}\quad \inf\left\{  \int_{\Omega} \left[ f\left( x, 
\boldsymbol{d\omega}\right) + \langle \boldsymbol{g} ; \boldsymbol{\omega} \rangle \right] :\boldsymbol{\omega} \in \boldsymbol{\omega_{0}}+W_{\delta, T}^{d,\boldsymbol{p}}
\left(  \Omega;\boldsymbol{\Lambda
^{k-1}}\right) \right\}  =m_{\delta,T},%
\]
under the hypotheses of the theorem \ref{Thm existence de minima ext vect} and thus $m_{\delta,T}=m.$ 
\end{remark}

\subsubsection{Existence theorem for polyconvex functions}
 \begin{theorem}\label{existence poly vect}
 Let $\Omega\subset\mathbb{R}^n$ be open, bounded, smooth and let $\boldsymbol{k}$ be given. Let $\boldsymbol{p} = (p_1,\ldots, p_m)$ where $1 < p_i < \infty$ 
 for all $1 \leq i \leq m $ be such that  
$\displaystyle  \sum_{i=1}^{m} \frac{\alpha_{i}}{p_{i}} < 1$ for any $\boldsymbol{\alpha}$ such that there exists $\boldsymbol{\xi} \in \boldsymbol{\Lambda^{k}}$ with 
$ \boldsymbol{\xi^{\alpha}} \neq 0.$ Let $F:\Omega\times\mathbb{R}^{\boldsymbol{\tau}(n, \boldsymbol{k})}\rightarrow \mathbb{R}\cup \lbrace +\infty \rbrace$ be a Carath\'eodory function, 
 satisfying for a.e $x \in \Omega,$ for every $\Xi \in \mathbb{R}^{\boldsymbol{\tau}(n, \boldsymbol{k})},$
\begin{align}\label{coercivitypoly}
  \Xi \mapsto &F(x, \Xi) \text{ is convex, } \notag \\
   &\text{and} \notag \\
  F(x, \Xi) &\geq a(x) + b \lVert \Xi_{1} \rVert^{\boldsymbol{p}}, 
 \end{align}
 where $\Xi = (\Xi_{1}, \ldots, \Xi_{N(\boldsymbol{k})}) \in \mathbb{R}^{\boldsymbol{\tau}(n, \boldsymbol{k})},$ $a \in L^{1}(\Omega),$ $b >0$ and 
 $$ \lVert \Xi_{1} \rVert^{\boldsymbol{p}} = \sum_{i=1}^{m} \lvert \Xi_{1}^{i}\rvert^{p_{i}}, \quad \text{ where } \Xi_{1} = (\Xi_{1}^{1},\ldots, \Xi_{1}^{m}) 
 \in \Lambda^{\boldsymbol{k}}.$$
 Let $\boldsymbol{g}
\in L^{\boldsymbol{p^{'}}}(\Omega; \boldsymbol{\Lambda^{k-1}})$ be such that
$\boldsymbol{\delta g} = 0$ in the sense of distributions and $\boldsymbol{\omega_{0}} \in W^{1,\boldsymbol{p}}\left(  \Omega;\boldsymbol{\Lambda^{k-1}}\right).$
Let \[
\mathcal{(P)}\ \inf\left\{   I(\boldsymbol{\omega}) =\int_{\Omega}\left[ F\left( x, 
T\left( \boldsymbol{d\omega} \right) \right) + \langle \boldsymbol{g} ; \boldsymbol{\omega} \rangle \right]  :\boldsymbol{\omega} 
\in \boldsymbol{\omega_{0}}+W_{0}^{1,\boldsymbol{p}}
\left(  \Omega;\boldsymbol{\Lambda^{k-1}}\right)  \right\}  =m.
\]
Then the problem $\mathcal{(P)}$ has a minimizer.
 \end{theorem}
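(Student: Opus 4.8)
The plan is to transpose the classical existence proof for polyconvex integrands to this setting, with the subdeterminants of a gradient replaced by the wedge powers $T(\boldsymbol{d\omega})$: the gauge-fixing Lemma \ref{lemmaforhodge} will supply the compactness that boundedness in $W^{d,\boldsymbol{p}}$ alone cannot, and the weak continuity results of Section \ref{weakcontinuity} will handle the nonlinear terms. First I would reduce to the case $\boldsymbol{g}=0$, exactly as in Step 1 of the proof of Theorem \ref{Thm existence de minima ext vect}: since $\boldsymbol{\delta g}=0$ in the sense of distributions, solve $\boldsymbol{dG}=0$, $\boldsymbol{\delta G}=\boldsymbol{g}$ in $\Omega$, $\nu\wedge\boldsymbol{G}=0$ on $\partial\Omega$, to get $\boldsymbol{G}\in W^{1,\boldsymbol{p'}}(\Omega;\boldsymbol{\Lambda^{k}})$; integration by parts gives $\int_{\Omega}\langle\boldsymbol{g};\boldsymbol{\omega}\rangle=-\int_{\Omega}\langle\boldsymbol{G};\boldsymbol{d\omega}\rangle+c$, where $c$ depends only on $\boldsymbol{\omega_{0}}$ since every competitor shares its boundary trace. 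Replacing $F(x,\Xi)$ by $\widetilde{F}(x,\Xi):=F(x,\Xi)-\langle\boldsymbol{G}(x);\Xi_{1}\rangle$ recasts $\mathcal{(P)}$, up to the additive constant $c$, as the minimization of $\int_{\Omega}\widetilde{F}(x,T(\boldsymbol{d\omega}))$; the integrand $\widetilde{F}(x,\cdot)$ is still convex (a linear term was subtracted) and Carath\'eodory, and Young's inequality $|\langle\boldsymbol{G}(x);\Xi_{1}\rangle|\leq\tfrac{b}{2}\lVert\Xi_{1}\rVert^{\boldsymbol{p}}+C\sum_{i}\lvert G_{i}(x)\rvert^{p_{i}'}$ with $\sum_{i}\lvert G_{i}\rvert^{p_{i}'}\in L^{1}(\Omega)$ shows $\widetilde{F}$ still satisfies \eqref{coercivitypoly} (with $b$ halved and $a$ replaced by an $L^{1}$ function). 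So from now on $\boldsymbol{g}=0$, and $I(\boldsymbol{\omega})=\int_{\Omega}F(x,T(\boldsymbol{d\omega}))$ depends on $\boldsymbol{\omega}$ only through $\boldsymbol{d\omega}$.

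Next I would take a minimizing sequence $\{\boldsymbol{\omega}^{s}\}\subset\boldsymbol{\omega_{0}}+W_{0}^{1,\boldsymbol{p}}(\Omega;\boldsymbol{\Lambda^{k-1}})$ (assuming $m<\infty$, otherwise there is nothing to prove). The lower bound in \eqref{coercivitypoly} gives $F(x,T(\boldsymbol{d\omega}^{s}))\geq a(x)+b\sum_{i}\lvert d\omega_{i}^{s}\rvert^{p_{i}}$, hence $\sup_{s}\lVert\boldsymbol{d\omega}^{s}\rVert_{L^{\boldsymbol{p}}(\Omega;\boldsymbol{\Lambda^{k}})}<\infty$. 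Since $1<p_{i}<\infty$ for every $i$, Lemma \ref{lemmaforhodge} yields $\boldsymbol{\omega}\in\boldsymbol{\omega_{0}}+W_{0}^{1,\boldsymbol{p}}(\Omega;\boldsymbol{\Lambda^{k-1}})$, a form $\boldsymbol{\beta}$ with $\boldsymbol{d\beta}=\boldsymbol{d\omega}$, and, after passing to a non-relabelled subsequence, a sequence $\{\boldsymbol{\beta}^{s}\}$ with $\boldsymbol{d\beta}^{s}=\boldsymbol{d\omega}^{s}$ for every $s$ and $\boldsymbol{\beta}^{s}\boldsymbol{\rightharpoonup}\boldsymbol{\beta}$ in $W^{d,\boldsymbol{p}}(\Omega;\boldsymbol{\Lambda^{k-1}})$. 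In particular $\boldsymbol{d\omega}^{s}=\boldsymbol{d\beta}^{s}\boldsymbol{\rightharpoonup}\boldsymbol{d\beta}=\boldsymbol{d\omega}$ in $L^{\boldsymbol{p}}(\Omega;\boldsymbol{\Lambda^{k}})$, and since the functional sees only the exterior derivative, it suffices to prove $\liminf_{s}\int_{\Omega}F(x,T(\boldsymbol{d\omega}^{s}))\geq\int_{\Omega}F(x,T(\boldsymbol{d\omega}))$.

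The heart of the argument is then the passage to the limit in the wedge powers. For each multiindex $\boldsymbol{\alpha}$ occurring in $T$, i.e. with $\boldsymbol{\xi^{\alpha}}\not\equiv0$, the hypothesis $\sum_{i}\alpha_{i}/p_{i}<1$ gives $1/\theta=\sum_{i}\alpha_{i}/p_{i}<1<1+\tfrac{1}{n}$ and $1/\theta-1/p_{i}<1$, so $\boldsymbol{p}$ is an admissible Sobolev exponent (with respect to $\boldsymbol{\alpha}$ and $\boldsymbol{k}$) with $1>1/\theta$. Since each $\boldsymbol{d\omega}^{s}$ is componentwise exact, Theorem \ref{weakcontexact}(i) gives $(\boldsymbol{d\omega}^{s})^{\boldsymbol{\alpha}}\boldsymbol{\rightharpoonup}(\boldsymbol{d\omega})^{\boldsymbol{\alpha}}$ in $L^{\theta}(\Omega;\Lambda^{\lvert\boldsymbol{k\alpha}\rvert})$; combined with $\boldsymbol{d\omega}^{s}\boldsymbol{\rightharpoonup}\boldsymbol{d\omega}$ in $L^{\boldsymbol{p}}$, this means $T(\boldsymbol{d\omega}^{s})\boldsymbol{\rightharpoonup}T(\boldsymbol{d\omega})$ componentwise, weakly in $L^{q}(\Omega)$ for some $q>1$. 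Since $\Xi\mapsto F(x,\Xi)$ is convex and $F(x,\Xi)\geq a(x)\in L^{1}(\Omega)$, the classical weak lower semicontinuity theorem for convex integrands (cf. Theorem 3.23 in \cite{DCV2}) yields
\[
\liminf_{s\to\infty}\int_{\Omega}F(x,T(\boldsymbol{d\omega}^{s}))\,dx\ \geq\ \int_{\Omega}F(x,T(\boldsymbol{d\omega}))\,dx .
\]
Hence $m=\lim_{s}I(\boldsymbol{\omega}^{s})=\lim_{s}\int_{\Omega}F(x,T(\boldsymbol{d\omega}^{s}))\geq\int_{\Omega}F(x,T(\boldsymbol{d\omega}))=I(\boldsymbol{\omega})\geq m$, so $\boldsymbol{\omega}$ (equivalently $\boldsymbol{\beta}$) is a minimizer of $\mathcal{(P)}$.

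The main obstacle is the weak sequential continuity of the nonlinear wedge powers in the last paragraph: this is precisely where the exponent restriction $\sum_{i}\alpha_{i}/p_{i}<1$ is essential and where one must call on the distributional wedge-product results of Section \ref{weakcontinuity}. It is also the reason Lemma \ref{lemmaforhodge} cannot be bypassed: a priori the minimizing sequence is only bounded in $W^{d,\boldsymbol{p}}$, which is too weak to extract a useful limit or to exhibit an admissible competitor, so it must be traded for the Hodge-normalized sequence $\{\boldsymbol{\beta}^{s}\}$, which is bounded in $W^{1,\boldsymbol{p}}$ and carries the same exterior derivative -- this is the `gauge fixing' that compensates for the lack of coercivity of the functional on $W^{1,\boldsymbol{p}}$.
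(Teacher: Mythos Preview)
Your proof is correct and follows essentially the same approach as the paper: reduce to $\boldsymbol{g}=0$ via the same integration-by-parts trick, use coercivity to bound $\boldsymbol{d\omega}^{s}$ in $L^{\boldsymbol{p}}$, produce an admissible limit $\boldsymbol{\omega}$, invoke the weak continuity of the wedge powers under the exponent condition $\sum_{i}\alpha_{i}/p_{i}<1$, and conclude by convex lower semicontinuity. The only cosmetic differences are that the paper does not route through Lemma~\ref{lemmaforhodge} (it extracts the weak $L^{\boldsymbol{p}}$ limit of $\boldsymbol{d\omega}^{s}$ directly and then solves for $\boldsymbol{\omega}$) and cites Theorem~\ref{weakcontgeneral} rather than Theorem~\ref{weakcontexact}; since the forms in play are exact and closed, either pair of choices works.
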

\begin{proof}
 By the same argument as in the proof of theorem \ref{Thm existence de minima ext vect}, Step 1, we can assume that $\boldsymbol{g} = 0.$ 
 Let $\lbrace \boldsymbol{\omega}^{s} \rbrace$ be a minimizing sequence of 
$\mathcal{(P)}$. By \eqref{coercivitypoly}, there exists a constant $c>0$ such that 
$$\lVert \boldsymbol{d\omega}^{s}\rVert_{L^{\boldsymbol{p}}\left(\Omega;\boldsymbol{\Lambda^{k}}\right)} \leq c .$$ Thus we have
$$  \boldsymbol{d\omega}^{s} \boldsymbol{\rightharpoonup} \boldsymbol{\zeta} \quad \text{ in } L^{\boldsymbol{p}}\left(\Omega;\boldsymbol{\Lambda^{k}}\right) . $$ By the weak convergence, 
it also follows that $d\boldsymbol{\zeta} = 0$ in the sense of distributions and $\nu\wedge\boldsymbol{\zeta} = \nu\wedge \boldsymbol{d\omega_{0}}$ on $\partial\Omega.$
Thus, we can find $\boldsymbol{\omega}\in \boldsymbol{\omega_{0}} + W^{1,\boldsymbol{p}}_{0}\left(\Omega;\boldsymbol{\Lambda^{k -1}}\right)$ such that 
\begin{equation*}
   \left\lbrace \begin{aligned}
                \boldsymbol{d\omega} = \boldsymbol{\zeta}  \qquad &\text{ in } \Omega, \\
                \boldsymbol{\omega} = \boldsymbol{\omega_{0}} \qquad &\text{  on } \partial\Omega.
                \end{aligned} 
                \right. 
                \end{equation*}
Thus, we have,                 
$$  \boldsymbol{d\omega}^{s} \boldsymbol{\rightharpoonup} \boldsymbol{d\omega} \quad \text{ in } L^{\boldsymbol{p}}\left(\Omega;\boldsymbol{\Lambda^{k}}\right) . $$
Then by the assumption on $\boldsymbol{p}$, theorem \ref{weakcontgeneral} implies,   
\begin{equation}\label{L1convergence}
 T\left( \boldsymbol{d\omega}^{s} \right) \boldsymbol{\rightharpoonup} T \left( \boldsymbol{d\omega} \right) \quad \text{ in } L^{1}\left(\Omega;\mathbb{R}^{\boldsymbol{\tau}(n, \boldsymbol{k})} \right) .
\end{equation}

Since $\Xi \mapsto F(x, \Xi)$ is convex, we obtain $I(\boldsymbol{\omega}) = m.$ 
\end{proof}
\begin{remark}
 The pointwise coercivity condition \eqref{coercivitypoly} used here can be unnecessarily strong in practice for applications. Indeed, any condition that 
 ensures the convergence \eqref{L1convergence} for all minimizing sequences is enough, as the proof shows. As an example, the `mean coercivity' condition  introduced in 
 Iwaniec-Lutoborski (\cite{iwaniec-lutoborski-null-lagrangian}, definition 9.1) works as well.    
\end{remark}

\smallskip

\noindent\textbf{Acknowledgement.} The author thanks Bernard Dacorogna, Saugata Bandyopadhyay and Jan Kristensen for helpful 
comments and discussions. Also, some of the results in this work constitutes a part of author's doctoral thesis in EPFL, whose support and facilities are gratefully acknowledged.

\end{document}